\documentclass[10pt, a4paper]{amsart}
 \usepackage{latexsym, amsmath, amssymb, longtable, booktabs,amscd,microtype,booktabs,cases}
\usepackage[square]{natbib}
\usepackage{graphicx}
\usepackage[dvipsnames]{xcolor}
\usepackage{caption}
\usepackage{tcolorbox}
\usepackage{dsfont}
\usepackage[all]{xy}
\usepackage{float}
\usepackage{enumerate}
\usepackage{multirow}
\usepackage{tikz}
\usetikzlibrary{matrix,arrows,decorations.pathmorphing}
\usepackage{collectbox}
\usepackage{array}
\usepackage{amsxtra}
\usepackage{mathrsfs}
\usepackage[colorinlistoftodos,prependcaption, textsize=tiny]{todonotes}
\reversemarginpar

\usepackage{enumitem}

\usepackage{hyperref}
\hypersetup{
	colorlinks,
	citecolor=purple,
	filecolor=black,
	linkcolor=blue,
	urlcolor=black
}
\bibpunct{[}{]}{,}{n}{}{;}

\numberwithin{equation}{section}
\usepackage{mathtools}

\DeclarePairedDelimiterX{\infdivx}[2]{}{}{%
	#1\;\delimsize\|\;#2%
}

\newcommand\Frob{{\mathrm{Frob}}}

\newcommand{\Z}{\mathbb{Z}}
\newcommand{\F}{\mathbb{F}}

\newcommand{\tr}{\operatorname{tr}}

\newcommand{\R}{\mathbb{R}}

\newcommand\A{\mathbb{A}}
\newcommand\C{\mathbb{C}}

\newcommand\GL{{\mathrm{GL}}}

\newcommand{\Q}{\mathbb{Q}}

\newcommand\Gal{{\mathrm{Gal}}}

\newcommand\Ind{{\mathrm{Ind}}}

\newcommand{\val}{\mathrm{val}}

\def\id#1{{\mathfrak{#1}}}

\newcommand{\mco}{\mathcal O}

\makeatother

\newtheorem{thm}{Theorem}[section]

\newtheorem{prop}[thm]{Proposition}
\newtheorem{lemma}[thm]{Lemma}
\theoremstyle{definition}
\newtheorem{definition}[thm]{Definition}

\newtheorem{remark}[thm]{Remark}

\theoremstyle{definition}

\theoremstyle{remark}

\theoremstyle{remark}

\theoremstyle{plain} 
\newcommand{\thistheoremname}{}
\newtheorem{genericthm}[thm]{\thistheoremname}

\newtheorem*{genericthm*}{\thistheoremname}
\newenvironment{namedthm*}[1]
{\renewcommand{\thistheoremname}{#1}%
	\begin{genericthm*}}
	{\end{genericthm*}}

\makeatletter
\def\imod#1{\allowbreak\mkern10mu({\operator@font mod}\,\,#1)}
\makeatother

\title{Towards a Generalized Maeda Conjecture for Modular Forms with Quadratic Nebentypus}

\author{Debargha Banerjee}
\address{Department of Mathematics, Indian Institute of Science Education and Research, Pune, India}
\email{debargha@iiserpune.ac.in}

\author{Dhrubajyoti Das}
\address{Department of Mathematics, Indian Institute of Science Education and Research, Pune, India}
\email{dhrubajyoti.das@students.iiserpune.ac.in}

\author{Srijan Das}
\address{Department of Mathematics, Indian Institute of Science Education and Research, Pune, India}
\email{das.srijan@students.iiserpune.ac.in}

\author{Tathagata Mandal}
\address{Department of Mathematical Sciences, Adamas University, Kolkata, India}
\email{math.tathagata@gmail.com}

\author{Sudipa Mondal}
\address{Department of Mathematics, Indian Institute of Technology Madras, Chennai, India}
\email{sudipa.mondal123@gmail.com}

\begin{document}
\date{\today}

\begin{abstract}
We establish a lower bound for the number of non-CM Galois orbits of newforms in $S_k(N,\Psi)$ with non-trivial quadratic nebentypus $\Psi$ for sufficiently large weights. Extending the work of Dieulefait, Pacetti, and Tsaknias in the trivial nebentypus setting, we analyze the restrictions imposed by the quadratic character on local inertial types and determine the number of admissible Galois orbits of such types. We further prove that Atkin-Li pseudo-eigenvalues are Galois equivariant and hence, up to a natural equivalence relation, define a global Galois invariant. Together with existence results for newforms having prescribed local behavior, these invariants yield a lower bound for the number of non-CM Galois orbits by counting compatible pairs of local-global invariants. Finally, computations in small weights show that this lower bound is not always attained, indicating that certain local equivalences are not realized globally by Galois conjugation over the coefficient field of the modular form.
\end{abstract}

\subjclass[2010]{Primary: 11F11; Secondary: 11F12, 11F80.}
\keywords{Modular forms, Galois orbits}
\maketitle

\setcounter{tocdepth}{1}
\tableofcontents

\section{Introduction}

Let $S_k(N, \Psi)$ denote the space of cusp forms of weight $k$, level $N$, and nebentypus $\Psi$. Let $\mathbb{Q}(\Psi)$ denote the cyclotomic field generated by the values of $\Psi$. The group $\mathrm{Gal}(\overline{\mathbb{Q}}/\mathbb{Q}(\Psi))$ acts naturally on these forms via their Fourier coefficients, partitioning the set of normalized Hecke eigenforms (newforms) into disjoint Galois orbits. A central problem is to understand the asymptotic behavior of the number of such orbits as the weight $k$ increases. For a given newform $f$, the size of its Galois orbit is equal to the degree of the coefficient field $[E_f:\Q]$, which is controlled by the factorization of the characteristic polynomial of a Hecke operator. For level $1$, Maeda's conjecture \cite{HidaMaeda} predicts that this Hecke polynomial is irreducible over $\mathbb{Q}$ for every weight $k \geq 16$. Consequently, the coefficient field has the maximal degree $\dim S_k(\mathrm{SL}_2(\mathbb{Z}))$, and all newforms in the space belong to a single Galois orbit. Substantial computational evidence supports Maeda's conjecture; the current verification extends to all weights $k \leq 14000$ by work of Ghitza and McAndrew \cite{Ghitza}.

For higher levels, the Hecke algebra splits into a direct sum of multiple fields, giving rise to several distinct Galois orbits. In this setting, Tsaknias \cite{tsaknias} proposed a generalization of Maeda's conjecture for arbitrary fixed level $N$ and trivial nebentypus. It predicts that the number of non-CM Galois orbits, denoted by $\mathbf{NCM}(N,k)$, stabilizes for sufficiently large weights $k$. He also provided computational evidence supporting this phenomenon.

In a recent foundational work, Dieulefait, Pacetti, and Tsaknias \cite{dpt} established a lower bound $\prod_{p \mid N}\mathbf{LO}(p^{\mathrm{val}_p(N)})$ for $\mathbf{NCM}(N,k)$ for all sufficiently large weights $k$ when $N$ is a prime power or square-free. This bound arises from two invariants imposed by global Galois conjugacy at primes dividing $N$: a local invariant given by the Galois orbit of local inertial types, and a local-global invariant given by the compatible minimal Atkin-Lehner sign. By counting admissible pairs of these invariants, they explicitly computed the value of $\mathbf{LO}(p^{\mathrm{val}_p(N)})$ for all primes $p$ dividing $N$. Based on computational data, they further conjectured \cite[Question 4.5]{dpt} that this lower bound gives the exact asymptotic count for prime-power or squarefree case, that is in these cases $\prod_{p \mid N}\mathbf{LO}(p^{\mathrm{val}_p(N)}) = \mathbf{NCM}(N,k)$ for all sufficiently large $k$.

In this paper, we extend this framework to spaces of newforms with non-trivial nebentypus $\Psi$. We develop the underlying local-global framework for broad classes of general nebentypus characters. To explicitly count the admissible local invariants, however, we restrict to quadratic nebentypus, which guarantees $\mathbb{Q}(\Psi) = \mathbb{Q}$ and allows us to study the absolute Galois orbits of these forms. Our main objective is to establish lower bounds for the number of such orbits in a setting where classical Atkin-Lehner theory is no longer available. More precisely, we show that the local inertial type and the minimal Atkin-Li pseudo-eigenvalue, considered up to a natural equivalence relation, give rise to global Galois invariants. We then determine the number of compatible pairs of these invariants and prove that every such pair is realized by a non-CM newform of sufficiently large weight.

To state this precisely, let $\mathbf{LO}(p^n, \Psi)$ denote the number of pairs consisting of a local-type Galois orbit of level $p^n$, quadratic nebentypus $\Psi$ and a compatible equivalence class of minimal Atkin-Li pseudo-eigenvalues (See Definition \ref{def:compatibleAL} and subsequent paragraph for the precise definition). Furthermore, let $\mathbf{NCM}(N,k,\Psi)$ denote the number of Galois orbits of non-CM newforms in $S_k(N, \Psi)$. Our main result is the following lower bound.

\begin{thm} \label{main_ineq}
Let $N$ be a positive integer such that either $N$ is a prime power, or $\val_p(N)$ is an odd integer $\geq 3$ for every prime $p \mid N$, and let $\Psi$ be a quadratic Dirichlet character modulo $N$. Then there exists a positive integer $k_0$ such that for any $k \geq k_0$,
\begin{equation} \label{main}
\prod_{p \mid N} \mathbf{LO}(p^{\val_p(N)}, \Psi) \leq \mathbf{NCM}(N,k,\Psi).
\end{equation}
\end{thm}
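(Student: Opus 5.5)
The plan is to build an injective map from the set of tuples of compatible local-global invariants, indexed by the primes $p\mid N$, into the set of Galois orbits of non-CM newforms in $S_k(N,\Psi)$. Write $N=\prod_p p^{n_p}$ and decompose $\Psi=\prod_p \Psi_p$, each $\Psi_p$ a quadratic character of conductor dividing $p^{n_p}$.

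To each newform $f\in S_k(N,\Psi)$ we attach, for every $p\mid N$, two objects: the inertial type $\tau_p(f)$ of the local component $\pi_{f,p}$, restricted to inertia via the local Langlands correspondence, and the minimal Atkin-Li pseudo-eigenvalue at $p$ modulo the natural equivalence relation. The first step is to check that both are invariant under $\sigma\in\Gal(\overline{\Q}/\Q)$ in the appropriate sense.
\begin{itemize}
\item For the local type, $\tau_p(f^\sigma)=\sigma(\tau_p(f))$. This follows from local-global compatibility: the $\ell$-adic representation of $f^\sigma$ is the $\sigma$-conjugate, and its Weil-Deligne restriction at $p\ne\ell$ is the conjugate type. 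Since $\Q(\Psi)=\Q$, $f^\sigma$ stays in $S_k(N,\Psi)$, so the Galois orbit of $\tau_p(f)$ is an invariant of the orbit of $f$.
\item For the pseudo-eigenvalue, the paper's result that Atkin-Li pseudo-eigenvalues are Galois equivariant shows that the equivalence class of the minimal one (after twisting to minimal conductor) is preserved.
\end{itemize}
The pair attached to $f$ at $p$ is therefore compatible in the sense of Definition~\ref{def:compatibleAL}, and the tuple of such pairs over $p\mid N$ depends only on the Galois orbit of $f$.

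The second step is existence. For each tuple of compatible pairs $(\mathcal{O}_p,\varepsilon_p)_{p\mid N}$ we must produce, for $k\ge k_0$, a non-CM newform $f\in S_k(N,\Psi)$ realizing it. I would invoke the existence results for newforms with prescribed local behavior: for a fixed supercuspidal or principal-series type at each $p$, with central character matching $\Psi_p$ times the weight factor, the number of newforms with that type grows linearly in $k$, by trace formula or dimension formulas for spaces with prescribed types. Prescribing the pseudo-eigenvalue class as well requires a refinement, namely an equidistribution or positive-proportion statement for the root numbers among forms of given type. I would aim to show that each compatible class is attained by a positive proportion of such forms, so that the count is $\gg k$. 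This is where the hypothesis on $N$ enters. For $N$ a prime power only one prime is involved. When every $n_p$ is odd and $\ge 3$, the types are supercuspidal of depth-zero-free (ramified) kind, and the local root numbers at different primes can be prescribed independently, because the global sign constraint involves the product and that product is not forced.

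The third step removes CM forms. CM newforms of level $N$ and weight $k$ come from Hecke characters of imaginary quadratic fields $K$ whose discriminant divides $N$. There are finitely many such $K$, and for each the number of CM forms grows sublinearly or is bounded by $O(k)$ with a smaller constant; more precisely, the CM forms with a given local type at $p$ are controlled by class group data and are $o$ of the total. So for $k\ge k_0$ a non-CM form exists for each tuple. Injectivity is then automatic: distinct tuples give distinct invariants, hence distinct orbits, which yields \eqref{main}.

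The main obstacle is the second step: simultaneously prescribing the type and the equivalence class of the minimal pseudo-eigenvalue at every prime, with nontrivial quadratic nebentypus where the Atkin-Lehner operators are not involutions. I would first try a trace formula with a test function given by the type projector times the pseudo-eigenvalue projector. An alternative is to use that twisting by $\Psi_p$-related characters permutes the classes, so that counts across classes are equal.
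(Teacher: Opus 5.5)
Your first step and the closing injectivity argument are fine and agree with the paper. For injectivity only the forward direction $\tau_p(f^\sigma)=\sigma(\tau_p(f))$ is needed, not the full statement of Theorem~\ref{orbit-preserves-type}. The gap is in the second step, which is the actual content of the theorem. You never show that every compatible class $[\lambda_p]$ is attained: you turn it into an equidistribution/positive-proportion problem for root numbers among forms of a fixed type and leave it open.

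The missing idea is that $\lambda_p(f)$ depends only on the local component $\pi_{f,p}$. By Flath's formula (Theorem~\ref{pseudo}, Remark~\ref{al-vs-epsilon}), $\lambda_p(f)=e^{2\pi}p^{-N_p/2}\Psi_p(-p^{-N_p})\varepsilon_p(\pi_{f,p},\psi_p)$. By Theorem~\ref{CountofLocalType}, the two compatible values $\pm\lambda$ of a ramified supercuspidal (or unramified Steinberg) type are exactly the values at $\pi$ and at $\pi\otimes\eta$, where $\eta$ is the unramified quadratic character. For all other types the type alone fixes $\lambda$. So no statistics of root numbers are needed; you only need newforms with a prescribed \emph{exact} local representation at each $p\mid N$. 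The paper obtains these from the limit multiplicity theorems of Shin--Templier and Kim--Shin--Templier. These give a count growing linearly in $k$ for prescribed discrete-series (supercuspidal or Steinberg) components at all primes of $N$ simultaneously. When only an inertial type must be fixed, the paper uses Weinstein's prescribed-type count instead (Theorem~\ref{existence-thm}).

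This also explains the hypothesis on $N$, which has nothing to do with a ``global sign constraint''. Exact local components can be prescribed only in the discrete series. Principal-series types, which occur only at exponent $1$ or at even exponents, can be controlled only up to inertial type. So one either works at a single prime or requires all exponents to be odd and $\ge 3$, where every type is ramified supercuspidal.

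Your twisting alternative does not close the gap at fixed level and nebentypus. The only twists that can preserve level and nebentypus are by quadratic Dirichlet characters unramified outside $N$, and their local effect at $p$ need not be $\pi\mapsto\pi\otimes\eta$. For $N=p^n$, the Legendre symbol exchanges $\lambda$ and $-\lambda$ for only one of the two ramified quadratic extensions $K$. For the other it fixes $\pi=\Ind\theta$, since $\pi\otimes\omega_{K/\Q_p}\cong\pi$.

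Finally, to discard CM forms you need that the number of CM newforms in $S_k(N,\Psi)$ is bounded independently of $k$ (Proposition~\ref{CMsareBounded}). An $O(k)$ bound with an unspecified constant would not suffice.
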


We restrict our counting to non-CM forms because the number of CM Galois orbits is periodic in the weight $k$, preventing the total orbit count from eventually becoming constant. Furthermore, for sufficiently large weights, every local type realized by a CM form is simultaneously realized by a non-CM form of the same level, meaning that the asymptotic behavior of the total number of Galois orbits is entirely determined by $\mathbf{NCM}(N, k, \Psi)$. 

For further background on Maeda's conjecture and its arithmetic consequences, we refer to \cite{Maeda}. Additional evidence for Maeda-type phenomena is provided by the analytic and average-case results of Murty-Srinivas \cite{MurtySrinivas} and Martin \cite{Martin}. Recently Lam \cite{joshlam} has established results towards a function-field analogue of the conjecture.

\subsection*{Organization of the paper}
We now outline the structure and main results of the paper. Section \ref{sec:prelim} establishes the necessary background and notation on the local Langlands correspondence and inertial types for $\GL_2$.

In Section \ref{localtypegaloisorbit}, we study Galois conjugacy classes of local inertial types. Since we consider types arising from modular forms with quadratic nebentypus, the central character condition imposes strong local constraints. Because a quadratic character has conductor at most $1$, the analysis separates according to whether the local nebentypus $\Psi_p$ is unramified or tamely ramified. This classification leads to explicit formulas for the number of admissible local type orbits $\mathbf{LT}(p^n, \Psi)$, given in Theorem \ref{thm:LTformula_nebentypus}.

Section \ref{sec: types-of-modular-forms} establishes the global Galois invariants required for our counting formulas. We first prove in Lemma \ref{prin} and Lemma \ref{sup} that if a newform realizes a given local type, then its Hecke field $E_f$ is sufficiently large to realize all elements in the Galois orbit of that type. This guarantees that organizing local types by Galois conjugacy is compatible with the study of global Galois orbits, which we formalize in Theorem \ref{orbit-preserves-type}.

We then introduce the Atkin-Li pseudo-eigenvalue as our second invariant. A key issue, highlighted by Equation \ref{eq:imp-of-min-twist}, is that under highly ramified twists, the pseudo-eigenvalue becomes entirely determined by the twisting character and the nebentypus, thereby losing any information about the original Galois orbit. To extract a meaningful invariant, we restrict our attention to the pseudo-eigenvalue of a minimal quadratic twist. We then determine how these pseudo-eigenvalues behave under Galois action by relating them to Langlands $\varepsilon$-factors in Theorem \ref{pseudo}. We show that they are not invariant under the Galois orbit of modular forms, but rather Galois equivariant up to an explicit nebentypus twist. This necessitates the introduction of an equivalence relation (defined in \ref{equiv-reln-on-al-ev}), under which the pseudo-eigenvalue becomes a well-defined global invariant. Although we focus on quadratic nebentypus, this formalism extends to general nebentypus, as explained in Remark \ref{al-general-nebentypus}.

A central component of our work is determining which Atkin-Li pseudo-eigenvalues can occur for a fixed local type, since enumerating these valid combinations is precisely what yields our global lower bound. This classification is established in Theorem \ref{CountofLocalType}. In many cases, the local type uniquely determines the pseudo-eigenvalue. In the remaining cases, there are two possibilities differing by a sign. In the trivial nebentypus case, the Atkin-Lehner sign is an invariant of the Galois orbit, so the counting problem is entirely local and leads to an explicit formula. In our setting, however, the pseudo-eigenvalue is defined only up to an equivalence relation arising from the Galois action. As a result, the two values $\lambda$ and $-\lambda$ may or may not define the same invariant. This creates an obstruction to writing a completely explicit formula. The local analysis alone does not determine whether the two values should be identified; this depends on the global Galois action.

To address this issue, we separate the relevant local types into two classes. In the first case, the two values become equivalent under Galois action and hence contribute a single invariant. In the second case, they remain distinct and contribute two invariants. We refer to these as symmetric and asymmetric orbits, respectively. The above distinction allows us to express the count $\mathbf{LO}(p^n,\Psi)$ in a clean form (cf. Theorem \ref{count-of-LO}), while isolating the contribution arising from asymmetric orbits. Although this symmetric-versus-asymmetric dichotomy can be determined explicitly for the Steinberg representation, the Gauss sum required for computing the local factor of ramified supercuspidal types often remains intractable. Since we cannot explicitly compute these pseudo-eigenvalues to determine whether they are conjugate under the global Galois action, we retain the asymmetric count $|S_{\mathrm{asym}}|$ as a parameter in our final enumerative formulas.

In Section \ref{sec:existence-thm}, we prove that these local configurations are globally realizable. Using limit multiplicity results of Weinstein (for principal series) and Kim, Shin, and Templier (for discrete series), we show in Theorem \ref{existence-thm} that every admissible pair arises from a non-CM newform for all sufficiently large weights, thereby establishing the lower bound for $\mathbf{NCM}(N,k,\Psi)$ stated above. To apply these limit multiplicity formulas, we require the level $N$ either to be a prime power or to satisfy the condition that $\mathrm{val}_p(N)$ is an odd integer at least $3$ for every prime $p \mid N$. If this existence result is proved for arbitrary $N$, then our lower bound theorem will hold in full generality.

Finally, Section \ref{sec:computational_evidence} presents computational evidence drawn from the LMFDB illustrating the counting formulas and the role of symmetric and asymmetric orbits. For several prime power levels where the local types are ramified supercuspidal, we compare the count $\mathbf{LO}(p^n,\Psi)$ with the observed number of non-CM Galois orbits $\mathbf{NCM}(p^n,k,\Psi)$. The data show that strict inequality $\mathbf{LO} < \mathbf{NCM}$ can occur at small weights. This reflects the fact that sometimes local invariants are identified under action of $\Gal(\overline{\Q}/\Q)$, which the corresponding global Hecke fields may not be large enough to realize. As a result, locally equivalent pseudo-eigenvalues can remain distinct at the global level.
\medskip

\textbf{Acknowledgements.}  D. B. is supported by ANRF grant ANRF/ARGM/2025/000421/MTR.
S.D. is supported by Prime Minister's Research Fellowship, Government of India. D. D. is supported by DST-Inspire fellowship, Government of India. S.M. acknowledges the support provided by the Institute Postdoctoral fellowship at IIT Madras.
\section{Preliminaries}
\label{sec:prelim}
Let $f =\sum a_n q^n \in S_k(N,\Psi)$ be a newform of weight $k \geq 2$, level $N$, and quadratic character $\Psi$. For a prime $p \mid N$, let $\mathbf{NCM}(N,k,\Psi)$ denote the number of Galois orbit of non-CM newforms in $S_k(N,\Psi)$. The primary aim of this project is to establish a lower bound on $\mathbf{NCM}(N,k,\Psi)$ for certain values of $N$.

Let $\pi_f$ denote the automorphic representation of the ad\`ele group $\mathrm{GL}_2(\A_\Q)$ attached to $f$. It is well known that $\pi_f$ decomposes as a restricted tensor product $\pi_f \cong \bigotimes_v' \pi_{f,v},$ where $v$ runs over all places of $\Q$. Here $\pi_{f,v}$ is an irreducible admissible representation of $\mathrm{GL}_2(\Q_v)$ and $\Q_v$ denotes the completion of $\Q$ at $v$.

Let $K$ be a non-archimedean local field of characteristic zero. Denote by $\mathcal{O}_K$ its ring of integers, by $\mathfrak{p}$ the maximal ideal, and by $k$ the residue field, of cardinality $q$. For $n \ge 1$, we write $U_K^n = 1 + \mathfrak{p}^n$ for the group of principal units of level $n$. For $n=0$, we have $U_K^0=\mco_K^\times$.

\begin{definition}
	The conductor $a(\chi)$ of a multiplicative character $\chi$ of $K^\times$ is the smallest non-negative integer $n$ such that $\chi|_{{U_K^n}} = 1$.	The conductor of a non-trivial additive character $\phi$ of $K$ is an integer $n(\phi)$ such that $\phi$ is trivial on $\mathfrak{p}^{-n(\phi)}$, but non-trivial on $\mathfrak{p}^{-n(\phi)-1}$.\end{definition}
A character $\chi$ is called \emph{unramified} if 
 	the conductor is zero,
 	\emph{tamely ramified} if it has conductor $1$ and
 	\emph{wildly ramified} if its conductor is greater or equal to $2$.
 	For any two characters $\chi_1$ and $\chi_2$, we have
 	$a(\chi_1\chi_2) \leq \text{max}(a(\chi_1),a(\chi_2))$.
 	The equality holds if $a(\chi_1) \neq a(\chi_2)$.

The following two lemmas are used repeatedly in the computations of Section~\ref{localtypegaloisorbit}. For convenience, we state and prove them separately here.


\begin{lemma} \label{Hensel}
    Let $p$ be a prime and $d$ is a positive integer such that $p \nmid d$. Every element of the principal unit group $U_{\Q_p}^{(1)} = 1 + p\Z_p$ is a perfect $d$-th power in $1 + p\Z_p$.
\end{lemma}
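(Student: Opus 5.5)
The plan is to show that the map $x \mapsto x^d$ is a bijection of $1+p\Z_p$ onto itself; surjectivity is exactly the claim. There are two natural routes, and I would use the first as the main argument. It is Hensel's lemma applied to the polynomial $F(X) = X^d - u$ for a fixed $u \in 1+p\Z_p$.

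\emph{Approximate root.} Take $x_0 = 1$. Then $F(1) = 1-u \equiv 0 \pmod p$, so $\val_p(F(1)) \geq 1$.

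\emph{Derivative.} We have $F'(1) = d$, which is a $p$-adic unit because $p \nmid d$. Hence $\val_p(F'(1)) = 0$.

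\emph{Lifting.} The inequality $\val_p(F(1)) > 2\val_p(F'(1)) = 0$ holds, so the standard Hensel lemma gives a unique root $x \in \Z_p$ with $x \equiv 1 \pmod p$. That root lies in $1+p\Z_p$ and satisfies $x^d = u$, as required. This argument works uniformly for all primes $p$, including $p=2$, because the derivative condition only needs $d$ to be a unit.

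As a cross-check, there is a second route via the binomial series. Define
\[
x = (1+pt)^{1/d} = \sum_{m\ge 0}\binom{1/d}{m}(pt)^m .
\]
Since $1/d \in \Z_p$, each $\binom{1/d}{m}$ lies in $\Z_p$, being a $p$-adic limit of ordinary binomial coefficients. The series therefore converges in $1+p\Z_p$, and formal power series identities give $x^d = 1+pt$. The first route is cleaner, so I would present that one.

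I expect no real obstacle here. The only point needing care is to state the precise form of Hensel's lemma being used (simple root modulo $p$) and to check that the lifted root stays in $1+p\Z_p$ rather than just in $\Z_p^\times$. Both follow directly from the congruence $x \equiv x_0 = 1 \pmod p$ in the lifting statement.
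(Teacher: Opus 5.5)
Your proposal is correct and matches the paper's proof: Hensel's lemma applied to $X^d - u$ at the approximate root $1$, with $F'(1) = d$ a $p$-adic unit, giving a root $\equiv 1 \pmod p$. The binomial-series cross-check is also valid but unnecessary, and it is not used in the paper.
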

\begin{proof}
    Let $x \in 1+p\Z_p$ and consider $f(Y)=Y^d-x \in \Z_p[Y]$. Since $x \equiv 1 \pmod p$, modulo $p$ the polynomial reduces to $\overline f(Y)=Y^d-1$, which admits the root $Y \equiv 1 \pmod p$. Moreover, $f'(Y)=dY^{d-1}$ gives $f'(1)=d \not\equiv 0 \pmod p$. By Hensel’s lemma, this root lifts uniquely to $y \in \Z_p$ with $y^d=x$ and $y \equiv 1 \pmod p$, hence $y \in 1+p\Z_p$.
\end{proof}

\begin{lemma} \label{L:quad-cond}
    Let $p$ be a prime and let $\chi: \Q_p^\times \to \C^\times$ be a character of order $d$ such that $p \nmid d$. Then the conductor of $\chi$ satisfies $a(\chi) \le 1$.
\end{lemma}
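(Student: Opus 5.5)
The plan is to show that $\chi$ is trivial on $U_{\Q_p}^{1} = 1+p\Z_p$; by the definition of the conductor this gives $a(\chi)\le 1$. We only need the restriction of $\chi$ to principal units, so the behaviour of $\chi$ on $\Z_p^\times$ modulo $U^1$ and on a uniformizer plays no role.

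Take any $x \in 1+p\Z_p$. Since $p \nmid d$, Lemma~\ref{Hensel} gives some $y \in 1+p\Z_p$ with $y^d = x$. Because $\chi$ has order $d$, the character $\chi^d$ is trivial on $\Q_p^\times$. Hence
\[
\chi(x) = \chi(y^d) = \chi(y)^d = \chi^d(y) = 1 .
\]
So $\chi|_{U^1_{\Q_p}} = 1$, and the smallest $n$ with $\chi|_{U^n}=1$ is at most $1$.

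There is no serious obstacle here. The only point needing care is that the $d$-th root $y$ lies in $U^1$ itself, and not merely in $\Z_p^\times$; Lemma~\ref{Hensel} guarantees exactly this. The hypothesis $p\nmid d$ is essential, since for $p=2$, $d=2$ there are quadratic characters of conductor $2$ or $3$. In the application to quadratic $\Psi$ with $p$ odd this yields conductor at most $1$.
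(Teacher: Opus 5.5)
Your proof is correct and is essentially the paper's argument: Lemma~\ref{Hensel} writes each $x \in 1+p\Z_p$ as $y^d$ with $y \in 1+p\Z_p$, so $\chi(x)=\chi(y)^d=1$, hence $\chi$ is trivial on $U^1_{\Q_p}$ and $a(\chi)\le 1$. Your side remark that the hypothesis $p\nmid d$ is needed, since for $p=2$ there are quadratic characters of conductor $2$ and $3$, is also accurate.
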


\begin{proof}
    To show $a(\chi)\le 1$, it suffices to prove that $\chi$ is trivial on $U_{\Q_p}^{(1)}=1+p\Z_p$. By Lemma \ref{Hensel} every $x\in 1+p\Z_p$ is a perfect $d$-th power in $1+p\Z_p$. As $\chi$ is of order $d$, the result follows.
\end{proof}

Let $K=\Q_p(\sqrt{d})$ be a quadratic extension of $\Q_p$ with its ramification index $e$. The following result \cite[Theorem 2.10]{dpt} determines the group structure of $(\mco_K/\id{p}^n)^\times$.

\begin{thm} \label{units-in-residue}
  Let $n$ be a positive integer and let $K/\Q_p$ be a quadratic extension for an odd prime $p$. The group structure of $(\mco_K/\id{p}^n)^\times$ is given in Table \ref{Table:units-in-residue} where ``--'' means no condition, and the pair $(a,b)$ satisfies the following two conditions (which determine them uniquely):
    \begin{itemize}
        \item  $a+b=n-1$,
        \item $a=b$ if $n$ is odd,
        \item $a=b+1$ if $n$ is even.
    \end{itemize}

\begin{table}[h]
\centering
\scalebox{0.95}{
\begin{tabular}{c c c c c c}
  \hline
$K$ & $e$ &$p$ &  $n$ & Structure & $\text{Generators}$\\
  \hline
--- & $1$ & $\neq 2$ & ---& $\F_{p^2}^\times \times \Z/p^{n-1} \times \Z/p^{n-1}$ & $\{\xi_{p^2-1},1+p,1+p\sqrt{d} \}$\\
  \hline
$\neq \Q_3(\sqrt{-3})$ & $2$ & $\neq 2$ & --- &$\F_p^\times \times \Z/p^a \times \Z/p^b$ & $\{\xi_{p-1},1+\sqrt{d},1+p\}$\\
  \hline
  $\Q_3(\sqrt{-3})$ & $2$ & $3$ & $\ge 2$ & $\F_3^\times \times \Z/3 \times \Z/3^{a-1} \times \Z/3^{b}$ & $\{-1,\xi_3,1+\sqrt{-3},4 \}$\\
  \hline
\end{tabular}
}

\caption{Group structure of $(\mco_K/\id{p}^n)^\times$ for $p \neq 2$.}
\label{Table:units-in-residue}
\end{table}
\end{thm}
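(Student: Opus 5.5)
The plan is to use the filtration of $(\mco_K/\id{p}^n)^\times$ by the images of the higher unit groups $U_K^i=1+\id{p}^i$, $1\le i\le n-1$. The reduction map gives an exact sequence
\[
1 \to U_K^1/U_K^n \to (\mco_K/\id{p}^n)^\times \to k^\times \to 1 .
\]
Here $k^\times$ is cyclic of order $q-1$ prime to $p$, and $U_K^1/U_K^n$ is a $p$-group of order $q^{n-1}$, because each successive quotient $U_K^i/U_K^{i+1}\cong k$. The sequence therefore splits canonically via Teichmüller lifts. This yields the factor $\F_{p^2}^\times$ (generated by $\xi_{p^2-1}$) when $e=1$, and $\F_p^\times$ (generated by $\xi_{p-1}$, or by $-1$ when $p=3$) when $e=2$. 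It then suffices to determine the abelian $p$-group $U_K^1/U_K^n$.

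For $p$ odd and $K\neq\Q_3(\sqrt{-3})$, I would use the $p$-adic logarithm and exponential. The condition for convergence is $i>e/(p-1)$. This holds for $i\ge1$ except when $e=2$ and $p=3$, and in that case it fails only at $i=1$.

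In the unramified case, $\log$ gives an isomorphism $U_K^1\cong \id{p}=p\mco_K\cong \Z_p^2$ carrying $U_K^n$ to $p^n\mco_K$. Hence $U_K^1/U_K^n\cong(\Z/p^{n-1})^2$. Generators come from a $\Z_p$-basis $\{1,\sqrt d\}$ of $\mco_K$, giving $1+p$ and $1+p\sqrt d$. To justify that these particular elements generate, I would observe that their logarithms are congruent to $p$ and $p\sqrt d$ modulo $p^2$, and then apply Nakayama's lemma.

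In the ramified case with uniformizer $\varpi=\sqrt d$, $\log$ identifies $U_K^1/U_K^n$ with $\id{p}/\id{p}^n$ as a $\Z_p$-module. Writing $\id{p}=\varpi\Z_p\oplus p\Z_p$ and $\id{p}^n=\varpi^n\mco_K$, the quotient has the form $\Z/p^a\oplus\Z/p^b$ with $a+b=n-1$. The exponents depend on the parity of $n$:
\begin{itemize}
\item For $n=2m+1$, we have $\id{p}^n=p^m\varpi\mco_K$, which gives $a=b=m$.
\item For $n=2m$, we have $\id{p}^n=p^m\mco_K$, which gives $\Z/p^m\oplus\Z/p^{m-1}$ on the $\varpi$ and $p$ summands.
\end{itemize}
The generators are $1+\sqrt d$ and $1+p$, by the same Nakayama argument.

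The main obstacle is $K=\Q_3(\sqrt{-3})$, where $\log$ is not injective on $U_K^1$ because $\mu_3\subset U_K^1$. I would first treat $U_K^2$, where $\exp$ and $\log$ do converge. By the ramified computation above, shifted by one, $U_K^2/U_K^n\cong\id{p}^2/\id{p}^n$ is isomorphic to $\Z/3^{a-1}\oplus\Z/3^b$, with generators built from $4=1+3$ and $(1+\sqrt{-3})^3$-type elements.

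Next I would analyze the extension by $U_K^1/U_K^2\cong\F_3$. The element $\xi_3=(-1+\sqrt{-3})/2\in U_K^1\setminus U_K^2$ has order exactly $3$, so it splits off as a $\Z/3$ factor. The remaining step is to show that $1+\sqrt{-3}$ generates the complementary cyclic factor of order $3^{a-1}$ modulo $U_K^n$, and that $4$ generates the $\Z/3^b$ factor. For this I would compare orders directly: compute $(1+\sqrt{-3})^{3}$, check which power of it first lands in $U_K^n$, and then verify that the product of the orders matches $|U_K^1/U_K^n|=3^{n-1}$. This order bookkeeping for small $n$ (notably $n=2,3$) is where I expect the most care to be needed. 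Since the statement is cited from \cite[Theorem 2.10]{dpt}, I would present only this case in detail and refer to that source for the routine cases.
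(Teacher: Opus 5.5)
The paper gives no proof of this statement; it quotes \cite[Theorem 2.10]{dpt}. Your filtration-and-logarithm strategy correctly produces the \emph{group structures} in all three rows. For $K=\Q_3(\sqrt{-3})$, the splitting $U_K^1=\mu_3\times U_K^2$ together with $\log\colon U_K^2/U_K^n\cong\id{p}^2/\id{p}^n$ gives exactly $\Z/3\times\Z/3^{a-1}\times\Z/3^b$.

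\textbf{The step that fails.} You plan to ``show that $1+\sqrt{-3}$ generates the complementary cyclic factor of order $3^{a-1}$.'' This is false for every $n\ge 4$, so the generator column of that row cannot be proved as stated. Your own proposed computation exposes the problem.
\begin{itemize}
\item One has $(1+\sqrt{-3})^3=-8$. In fact $1+\sqrt{-3}=2e^{\pi i/3}=(-2)\,\xi_3^{2}$, and $-2=1-3$ lies in $1+3\Z_3$, the closure of $\langle 4\rangle$.
\item Hence $\{-1,\xi_3,1+\sqrt{-3},4\}$ generates only the image of $\mu_6\Z_3^\times$. That image has order $2\cdot 3^{\lceil n/2\rceil}$, which is less than $2\cdot 3^{n-1}$ once $n\ge 4$.
\item Concretely, modulo $\id{p}^4=9\mco_K$ one has $1+\sqrt{-3}\equiv 4^2\xi_3^2$. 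The four listed elements then generate a subgroup of order $18$ in a group of order $54$.
\end{itemize}
Your suggested generators ``$4$ and $(1+\sqrt{-3})^3$'' for $U_K^2/U_K^n$ fail for the same reason, since $-8\in 1+3\Z_3$. Note also that the difficulty is at $n\ge4$, not at $n=2,3$ as you expected; for $n=2,3$ one has $a-1=0$ and the listed elements do generate.

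\textbf{The repair.} Use $1+3\sqrt{-3}$ in place of $1+\sqrt{-3}$.
\begin{itemize}
\item Modulo $3\id{p}^2=\id{p}^4$ one has $\log 4\equiv 3$ and $\log(1+3\sqrt{-3})\equiv 3\sqrt{-3}$. So Nakayama shows that $4$ and $1+3\sqrt{-3}$ generate $U_K^2/U_K^n$.
\item Their logarithms have $\id{p}$-adic valuations $2$ and $3$, which give orders $3^b$ and $3^{a-1}$ respectively.
\item The product of these orders is $|U_K^2/U_K^n|=3^{n-2}$. Hence $\{-1,\xi_3,1+3\sqrt{-3},4\}$ is a basis adapted to the stated decomposition.
\end{itemize}

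\textbf{A smaller gap.} By your own criterion $i>e/(p-1)$, the field $\Q_3(\sqrt3)$ also fails at $i=1$, since $\exp$ does not converge on $\id{p}$ there. Yet you treat it with the generic log/exp argument. The conclusion still holds, but it needs an argument:
\begin{itemize}
\item $\log$ maps $U_K^i$ into $\id{p}^i$ for all $i\ge1$.
\item For $i\ge2$ it induces $1+x\mapsto x$ on $U_K^i/U_K^{i+1}$.
\item For $i=1$ it induces $1+u\sqrt3\mapsto (u+u^3)\sqrt3\equiv 2u\sqrt3$, which is bijective.
\end{itemize}
So $\log$ is a filtered isomorphism $U_K^1\cong\id{p}$, and your ramified computation goes through. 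Likewise, in the ramified Nakayama step you must check spanning modulo $p\id{p}=\id{p}^3$, not modulo $\id{p}^2$.
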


\subsection{Inertial types for $\mathrm{GL}_2$}
\begin{definition} \label{local-global}
	\begin{enumerate}
		\item
		Let $\tau$ be a Weil–Deligne representation. By the local inertial type of $\tau$, denoted by $\widetilde{\tau}$, we mean the isomorphism class of its restriction to the inertia subgroup. A type is called unramified if $\widetilde{\tau}$ is the trivial representation.
		\item 
		A collection $(\widetilde{\tau}_p)_p$ of local inertial types, where $p$ runs over all primes,  is called a global inertial type if $\widetilde{\tau}_p$ is unramified for almost all primes.
	\end{enumerate}
\end{definition}
Let $\mathcal{A}_p$ denote the set of isomorphism classes of complex-valued irreducible admissible representations of $\GL_2(\Q_p)$.  Let $\mathcal{B}_p$ denote the set of isomorphism classes of  $2$-dimensional Frobenius-semisimple Weil–Deligne representations of $\Q_p$. By the local Langlands correspondence, we have a bijective correspondence between $\mathcal{A}_p$ and $\mathcal{B}_p$, 
\begin{equation} \label{bijective}  \pi \longleftrightarrow \tau(\pi).  \end{equation}
Using the above correspondence, one can study the local inertial type by the restriction $\pi|_{\GL_2(\Z_p)}$ (see \cite[Section 2.1]{weinstein} for more details).

\noindent \textbf{Elements of $\mathcal{A}_p$:} Set $G:=\mathrm{GL}_2(\Q_p)$.
For two quasi-characters $\mu_1,\mu_2$ of $\Q_p^\times$, let $V(\mu_1,\mu_2)$ denote the space of locally constant functions $\psi: G \to \C$ satisfying
\[
\psi \Big(
\left[ {\begin{array}{cc}
		a & * \\
		0 & d \\
\end{array} } \right]
g
\Big)
=
\mu_1(a) \mu_2(d) |a/d|^{1/2} \psi(g),
\]
for all $a,d \in \Q_p^\times$ and $g \in G$. The induced representation of $G$ by its action on $V(\mu_1,\mu_2)$ through right translation is denoted by $\pi(\mu_1,\mu_2)$. An element of $\mathcal{A}_p$ is one of the following:
\begin{enumerate} 
	\item \smallskip 
	If $\mu_1 \mu_2^{-1} \neq |\,\,|^{\pm 1}$, then $\pi(\mu_1,\mu_2)$ is irreducible and the representation $\pi(\mu_1,\mu_2)$ is called a \emph{principal series representation}. The central character of $\pi(\mu_1,\mu_2)$ equals $\mu_1 \mu_2$ and its conductor equals $a(\mu_1)+a(\mu_2)$.
	\item \smallskip 
	The unique irreducible sub-representation of $\pi(|\,\,|^{1/2}, |\,\,|^{-1/2})$ is the \emph{Steinberg representation}, denoted by St. More generally, we consider $\mu_1=\mu |\,\,|^{1/2}$, $\mu_2=\mu |\,\,|^{-1/2}$ for some character $\mu$ of $\Q_p^\times$. In this case $\pi(\mu |\,\,|^{1/2}, \mu |\,\,|^{-1/2})$ contains a unique irreducible sub-representation which is the twist $  \mathrm{St} \otimes \mu $ of the Steinberg representation, called a \emph{special representation}. The central character of $\mathrm{St} \otimes \mu$ equals $\mu^2$ and its conductor equals
    \[
\mathrm{cond}(\mathrm{St} \otimes \mu)=
\begin{cases}
2a(\mu) & \text{ if }\mu \text{ is\ ramified,}\\
1 & \text{ otherwise.}
\end{cases}
\]

	\item \smallskip 
	Any representation other than the above ones is of supercuspidal type.
\end{enumerate}

\medskip

\noindent \textbf{Elements of $\mathcal{B}_p$ and the correspondence:} On the other hand, an element of $\mathcal{B}_p$ is a $2$-dimensional Frobenius-semisimple representation of the Weil-Deligne group $WD(\Q_p)$ of $\Q_p$. This is a pair $(\phi, N)$ consisting of
\begin{enumerate} 
	\item [-]
	A $2$-dimensional semisimple representation $\phi : W(\Q_p) \rightarrow  {\rm GL}_2(\C)$ of the Weil group $W(\Q_p)$, which is continuous for the discrete topology in $\GL_2(\C)$;
	\item [-]
	$N$ is a nilpotent endomorphism of $\C^2$ such that $\phi(g)N\phi(g)^{-1}= p^{\omega_1(g)}N \quad \forall ~g \in W(\Q_p)$,
\end{enumerate} 
where $\omega_1$ is a character of $W(\Q_p)$ defined by $g|_{\overline{\F}_p}: x \mapsto x^{\omega_1(g)} \,\, \forall~ x \in \overline{\mathbb{F}}_p$, see \cite{TateCorvallis}.

As we are interested in computing the Galois orbits of a newform $f$, depending on the type of $\pi_p \in \mathcal{A}_p$ as described above, the corresponding element of $\mathcal{B}_p$ under local Langlands correspondence is given as follows:
\begin{enumerate} 
	\item \smallskip 
	If $\pi_p=\pi(\mu_1,\mu_2)$ is a principal series representation, the corresponding Weil–Deligne representation $(\phi, N)$ is given by
		$\phi(x)= \begin{bmatrix}
			\mu_1(x) & \\ & \mu_2(x)	\end{bmatrix} , \, x\in W(\Q_p) \text{ and } N=0.$
	\item \smallskip 
	If $\pi_p= \mathrm{St} \otimes \mu$ is of special type, then the Weil–Deligne representation attached to $\pi_p$ is a pair $(\phi, N)$ where
		$\phi(x)= \begin{bmatrix}
			\mu(x)|x|^{\frac{1}{2}} & \\ & \mu(x)|x|^{-\frac{1}{2}}	\end{bmatrix} , \, x\in W(\Q_p) \text{ and } N=\begin{bmatrix}
			0 & 1 \\ 0 & 0
		\end{bmatrix}.$
	\item \smallskip 
	
Let $\pi_p$ be a supercuspidal representation of $\mathrm{GL}_2(\Q_p)$. If $p$ is an odd prime, then $\pi_p$ is always of dihedral supercuspidal type (see \cite{Bump}). By the local Langlands correspondence, the Weil-Deligne representation $(\phi, N)$ associated to $\pi_p$ has $N=0$, and the Langlands parameter $\phi$ is of the form 
\begin{equation} \label{eq:ind_theta}
    \phi = \Ind_{W(K)}^{W(\Q_p)} \theta,
\end{equation}
where $K/\Q_p$ is a quadratic extension and $\theta$ is a character of $W(K)$ such that $\theta \neq \theta^\iota$. Here, $\iota \in W(\Q_p) \setminus W(K)$ is a representative of the non-trivial coset, and $\theta^\iota(x) = \theta(\iota x \iota^{-1})$ for $x \in W(K)$. With respect to a suitable basis, $\phi$ has the following matrix form:
\begin{equation} \label{eq:phi_matrix}
    \phi(x) = \begin{bmatrix} 
    \theta(x) & 0 \\ 
    0 & \theta^\iota(x) 
    \end{bmatrix} \quad \text{for } x \in W(K), \quad \text{and} \quad 
    \phi(\iota) = \begin{bmatrix} 
    0 & 1 \\ 
    \theta(\iota^2) & 0 
    \end{bmatrix}.
\end{equation}

Considering $\theta$ as a character of $K^{\times}$ via the isomorphism $W(K)^{ab}\cong K^{\times}$, $\phi$ is irreducible precisely when $\theta$ does not factor through the norm map $\text{Norm}: K^{\times} \rightarrow \Q_p^{\times}$. Let $\omega_{K / \Q_p}$ denote the quadratic character of $\Q_p$ associated by local class field theory to the extension $K/ \Q_p$. The central character of $\Ind^{W(\Q_p)}_{W(K)} \theta$ equals $\theta|_{\Q_p^{\times}}\cdot \omega_{K/\Q_p}$.

The Artin conductor $a(\phi)$ is related to the conductor of $\theta$, denoted $a(\theta)$, by the formula
\begin{equation*}
    a(\phi) = \dim(\theta) \, v(d_{K/\Q_p}) + f_{K/\Q_p} \, a(\theta),
\end{equation*}
where $d_{K/\Q_p}$ is the discriminant and $f_{K/\Q_p}$ is the residual degree of the extension $K/\Q_p$ (see \cite{Serre} for more details). Note that $\dim(\theta) = 1$. If $K/\Q_p$ is unramified, the discriminant is the unit ideal, meaning $v(d_{K/\Q_p}) = 0$, and the residual degree is $f_{K/\Q_p} = 2$. If $K/\Q_p$ is ramified and $p > 2$, the ramification is tame, which ensures $v(d_{K/\Q_p}) = 1$ and $f_{K/\Q_p} = 1$. Applying these facts, the conductor of $\pi_p$ (which equals $a(\phi)$) simplifies to
\begin{equation} \label{eq:conductor_Np}
    N_p = a(\pi_p) = 
    \begin{cases} 
        2a(\theta), & \text{if } K/\Q_p \text{ is unramified}, \\ 
        1+a(\theta), & \text{if } K/\Q_p \text{ is ramified}.
    \end{cases}
\end{equation}

For $p=2$, the classification is more complex because wild ramification allows for non-dihedral supercuspidal representations. In this case, the projective image of the Langlands parameter $\phi$ in $\mathrm{PGL}_2(\C)$ can also be isomorphic to $A_4$ (tetrahedral) or $S_4$ (octahedral).

\end{enumerate}
\section{Galois Orbits of Local Types}\label{localtypegaloisorbit}
Let $f(z) = \sum\limits_{n=1}^\infty a_n q^n$ be a modular form. For any continuous automorphism $\sigma \in \Gal(\C/\Q)$, the Galois conjugate of $f$ is defined by applying $\sigma$ to the Fourier coefficients of $f$, namely $f^\sigma(z)=\sum_{n=1}^\infty \sigma(a_n)q^n$.

\begin{prop}
    Let $f \in S_k(N, \Psi)$ where $\Psi$ is a quadratic character. Then the above Galois action is well-defined; that is, $f^\sigma \in S_k(N, \Psi)$.
\end{prop}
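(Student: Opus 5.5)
We will use the fact that $S_k(N,\Psi)$ has a basis of forms with Fourier coefficients in the field $\Q(\Psi)$. Since $\Psi$ is quadratic, its values lie in $\{\pm 1\}$, so $\Q(\Psi)=\Q$. The plan is to reduce to this rational structure. Because the action of $\sigma$ on $q$-expansions is additive and $\sigma$-semilinear, stability of $S_k(N,\Psi)$ then follows by linearity.

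\textbf{Step 1: a rational basis.} We first establish that $S_k(N,\Psi)$ has a basis consisting of forms whose $q$-expansion coefficients lie in $\Q(\Psi)$. This is the standard rationality result (Shimura, Theorem 3.52; equivalently, Deligne--Rapoport via the $q$-expansion principle over $\Z[\Psi]$). Its proof realizes $S_k(\Gamma_1(N))$ with a basis in $\Q$-rational $q$-expansions, for instance using Eisenstein series and products of them, or Eichler--Shimura together with the rational structure on cohomology. One then projects to the $\Psi$-isotypic part with the idempotent
\[
e_\Psi=\frac{1}{\varphi(N)}\sum_{d\in(\Z/N\Z)^\times}\Psi(d)^{-1}\langle d\rangle .
\]
The diamond operators $\langle d\rangle$ preserve $\Q$-rational $q$-expansions on $\Gamma_1(N)$, which can be checked via the $q$-expansion principle. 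Hence $e_\Psi$ maps rational forms to forms with coefficients in $\Q(\Psi)=\Q$.

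\textbf{Step 2: the action of $\sigma$.} Write $f=\sum_i c_i g_i$ with $c_i\in\C$ and each $g_i$ having rational coefficients. Then $f^\sigma=\sum_i\sigma(c_i)g_i$, because $\sigma$ fixes $\Q$ and the coefficient of $q^n$ in $f^\sigma$ is $\sigma\bigl(\sum_i c_i a_n(g_i)\bigr)=\sum_i\sigma(c_i)a_n(g_i)$. This is a finite $\C$-linear combination of elements of $S_k(N,\Psi)$, so it lies in $S_k(N,\Psi)$.

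\textbf{Main obstacle.} The substantive input is Step 1, the existence of a rational basis, and specifically the fact that the diamond operators preserve rationality of $q$-expansions. If we are allowed to cite Shimura's theorem or Deligne--Serre directly, the argument is immediate. Otherwise, we would argue via the Hecke algebra. It is generated by the $T_n$, acts faithfully, and preserves $S_k(N,\Psi)$. Its characteristic polynomials have coefficients in $\Q(\Psi)$, since the $T_n$ act on integral cohomology twisted by $\Psi$. So $\sigma$ permutes the normalized newform eigensystems (and the oldforms built from them), and each image is again an eigensystem with nebentypus $\sigma\circ\Psi=\Psi$. Here we use that $\Psi$ is quadratic, so that $\sigma$ fixes its values. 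We would try the direct citation first.
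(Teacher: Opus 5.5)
Your proposal is correct and is essentially the paper's argument. Both rest on Shimura's Theorem 3.52 (a $\Q$-rational basis of $S_k(\Gamma_1(N))$), the compatibility of the diamond operators with that rational structure, and the fact that $\Psi(d)\in\{\pm1\}$ is fixed by $\sigma$. The paper applies $\sigma$ to $\langle d\rangle f=\Psi(d)f$ rather than projecting a rational basis with the rational idempotent $e_\Psi$, and it uses the rationality of $\langle d\rangle$ implicitly, which you correctly identify as the real input.
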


\begin{proof}
   The condition $f \in S_k(N, \Psi)$ implies $\langle d \rangle f = \Psi(d) f$. By \cite[Thm. 3.52]{Shimura}, the space $S_k(\Gamma_1(N))$ possesses a basis of forms with rational Fourier coefficients, ensuring that the Galois action stabilizes $S_k(\Gamma_1(N))$ and commutes with the diamond operators $\langle d \rangle$ for $d \in (\Z/N\Z)^\times$. Therefore, applying $\sigma$ to both sides yields $\langle d \rangle (f^\sigma) = \sigma(\Psi(d)) f^\sigma$. Because $\Psi$ is a quadratic character, we have $\sigma(\Psi(d)) = \Psi(d)$, implying $f^\sigma \in S_k(N, \Psi)$.
\end{proof}

A continuous automorphism $\sigma$ acts on the set of $2$-dimensional Frobenius-semisimple Weil-Deligne representations. Following \cite{dpt}, we make the following definition.

\begin{definition}
Given $\pi_1,\pi_2 \in \mathcal{A}_p$, we say they have \emph{Galois conjugate local inertial types} if there exists $\sigma \in \Gal(\C/\Q)$ such that the local inertial types of $\tau(\pi_1)$ and $\sigma(\tau(\pi_2))$ agree.
\end{definition}

This notion induces an equivalence relation on $\mathcal{A}_p$. We call the corresponding equivalence classes \emph{local type Galois orbits}. The same terminology will also be used for characters, viewed as $1$-dimensional automorphic forms.

\subsection{Counting local type Galois orbits with quadratic nebentypus}

Let $p$ be an odd prime, and let $f$ be a newform of level $N$ with a non-trivial nebentypus $\Psi$. We factor the level as $N=p^{N_p}N'$ with $p \nmid N'$ and the nebentypus as $\Psi=\Psi' \Psi_p$, where $\Psi':(\Z/N'\Z)^\times\to \C^\times$ is the prime-to-$p$ part and $\Psi_p:(\Z/p^{N_p}\Z)^\times \to \C$ is the $p$-part. To understand the local central character condition at $p$, we first adelize $\Psi$. For $x \in \Q_p^\times$, let $[x]=(1, \cdots, 1, x, 1, \cdots)$ be its corresponding element in $\mathbb{A}_\Q^\times$, where $x$ occupies the $p$-th place. Writing $x=p^ju$ for some $j \in \Z$ and $u \in \Z_p^\times$, the global character $\Psi$ induces a local character of $\Q_p^\times$ via the following formula:
\begin{equation} \label{adelic}
	\Psi([p^j u]) = \Psi'(p)^j \Psi_p^{-1}(u).
\end{equation}
By abuse of notation, we let $\Psi_p$ also denote this local character of $\Q_p^\times$. For the remainder of this section, we restrict our attention to quadratic $\Psi$ and newforms of level exactly $p^n$ (i.e., $N'=1$ and $N_p=n$). By Lemma \ref{L:quad-cond}, the local conductor $C_p$ of the quadratic character $\Psi_p$ satisfies  $C_p \leq 1$.

We define $\mathbf{LT}(p^n, \Psi)$ to be the number of distinct Galois orbits of local inertial types associated to the $p$-component of our newform $f$. More precisely, an inertial type is an isomorphism class of a two-dimensional representation of the inertia group $I_{\Q_p}$ at $p$. We count the orbits of these types under the action of the absolute Galois group $\mathrm{Gal}(\overline{\Q}/\Q)$, subject to the following local constraints dictated by the global setup:
\begin{enumerate}
    \item \textbf{Conductor Constraint:} The Artin conductor of the representation is exactly $n$.
    \item \textbf{Central Character Constraint:} By local class field theory, the central character of the representation restricts to the unit group $\Z_p^\times$ exactly as the inverse of the local nebentypus.
  
\end{enumerate} By the classification of elements of $\mathcal{A}_p$ and the local Langlands correspondence discussed in the last section, we compute this by summing the Galois orbits across the principal series, Steinberg, and supercuspidal types. 

Supercuspidal representations are induced from a character $\theta$ of a quadratic extension $K$ of $\Q_p$. For this induction to have the correct nebentypus, the central character constraint dictates, see \cite[Remark 4.5]{bmm}:
\begin{equation}\label{central-ch-condn}
    \theta|_{\Z_p^\times} = \Psi_p^{-1}\omega_{K/\Q_p},
\end{equation}
where $\omega_{K/\Q_p}$ is the quadratic character associated to the quadratic extension $K/\Q_p$ by local class field theory. To count the Galois orbits of these types, we quote the following result directly from \cite[Lemma 2.9]{dpt}:

\begin{lemma}
    Let $\sigma \in \Gal(\C/\Q)$ and suppose $(K, \theta)$ parametrizes a supercuspidal representation (where $\theta: W(K) \rightarrow \C^{\times}$). Then $\sigma (K, \theta)=(K, \sigma \circ \theta)$.
\end{lemma}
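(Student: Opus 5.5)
The lemma concerns how $\sigma\in\Gal(\C/\Q)$ acts on a dihedral Weil parameter $\phi=\Ind_{W(K)}^{W(\Q_p)}\theta$. We define $\sigma(\phi)$ by applying $\sigma$ to the matrix entries in a fixed basis, i.e.\ $\sigma(\phi)=\sigma\circ\phi$, and take $\sigma(\phi,N)=(\sigma\circ\phi,N)$ with $N=0$ here. The plan is to show directly that $\sigma\circ\Ind_{W(K)}^{W(\Q_p)}\theta\cong\Ind_{W(K)}^{W(\Q_p)}(\sigma\circ\theta)$. We then check that $(K,\sigma\circ\theta)$ is again an admissible supercuspidal datum, so the pair parametrizing $\sigma(\phi)$ is $(K,\sigma\circ\theta)$.

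\textbf{Step 1 (induction commutes with $\sigma$).} Use the explicit matrix form \eqref{eq:phi_matrix}. For $x\in W(K)$ we have $\phi(x)=\mathrm{diag}(\theta(x),\theta^\iota(x))$, so
\[
\sigma(\phi(x))=\mathrm{diag}\bigl(\sigma\theta(x),\sigma\theta^\iota(x)\bigr)=\mathrm{diag}\bigl((\sigma\circ\theta)(x),(\sigma\circ\theta)^\iota(x)\bigr),
\]
since $\sigma\circ(\theta\circ c_\iota)=(\sigma\circ\theta)\circ c_\iota$. Similarly,
\[
\sigma(\phi(\iota))=\begin{bmatrix}0&1\\ \sigma\theta(\iota^2)&0\end{bmatrix}.
\]
These are exactly the matrices \eqref{eq:phi_matrix} for $\Ind(\sigma\circ\theta)$ in the same basis. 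Since $W(\Q_p)=W(K)\sqcup \iota W(K)$, the two representations agree on all of $W(\Q_p)$. An alternative route is Frobenius reciprocity: $\sigma$ induces a bijection on $\Hom$-spaces and commutes with restriction. Continuity for the discrete topology is preserved, because $\sigma\circ\theta$ has the same kernel as $\theta$ and so is still trivial on an open subgroup.

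\textbf{Step 2 (the datum stays admissible).} We need $\sigma\circ\theta\ne(\sigma\circ\theta)^\iota$. This holds because $\sigma$ is injective on $\C$, so $\sigma\theta(x)=\sigma\theta^\iota(x)$ forces $\theta(x)=\theta^\iota(x)$, contradicting $\theta\neq\theta^\iota$. Equivalently, $\sigma\circ\theta$ does not factor through the norm: if $\sigma\circ\theta=\chi\circ\mathrm{Norm}$, then $\theta=(\sigma^{-1}\circ\chi)\circ\mathrm{Norm}$. Hence $\Ind(\sigma\circ\theta)$ is irreducible, and the resulting representation is again supercuspidal with the same quadratic field $K$.

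\textbf{Step 3 (well-definedness).} The pair $(K,\theta)$ is determined only up to $\theta\sim\theta^\iota$. Because $\sigma$ commutes with $\iota$-conjugation, the map $(K,\theta)\mapsto(K,\sigma\circ\theta)$ respects this ambiguity. We should also note that the field $K$ is not changed, since it is read off from the restriction to inertia and the projective image, and neither is affected by $\sigma$. I expect no real obstacle. The only point requiring care is to fix conventions: $\sigma$ acts on the coefficients in $\C$ rather than on $W(\Q_p)$, and $\theta$ is regarded through class field theory as a character of $K^\times$. Both compositions are compatible with the reciprocity map, since that map is independent of the coefficients.
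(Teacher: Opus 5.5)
Your proof is correct and is essentially the argument behind the paper's treatment. The paper gives no proof of its own and simply quotes the lemma from \cite[Lemma 2.9]{dpt}; the statement follows directly by applying $\sigma$ entrywise to the explicit matrices of $\Ind_{W(K)}^{W(\Q_p)}\theta$, exactly as in your Step 1.

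One minor point: your remark in Step 3 that $K$ ``is read off'' from the inertial restriction and the projective image is imprecise. When the projective image is the Klein four group, the parameter is induced from all three quadratic extensions of $\Q_p$. The remark is unnecessary anyway, since Step 1 already exhibits $\sigma\circ\phi$ as induced from the same $K$.
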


In particular, two non-isomorphic supercuspidal representations have Galois-conjugate inertial types if and only if they are induced from the same quadratic extension $K$ and their inducing characters are Galois conjugate; equivalently, one is obtained from the other by raising to a power coprime to its order.

Theorem~\ref{units-in-residue} gives explicit descriptions of the unit groups $(\mco_K/\mathfrak{p}^n)^\times$. In what follows, $\sigma_0(a)$ denotes the number of positive divisors of an integer $a$.

\begin{thm} \label{thm:orbits_primitive}
Let $p\ge 3$ be an odd prime, $K=\Q_p(\sqrt{d})$ be a quadratic extension of $\Q_p$, and $\Psi_p$ be a quadratic character of $\Q_p^\times$. Let $m$ be the largest odd factor of $p+1$. Let $\theta: K^\times \to \C^\times$ be a primitive character of conductor $n$ satisfying the central character condition $\theta|_{\Z_p^\times} = \Psi_p^{-1}\omega_{K/\Q_p}$. The number of local type Galois orbits of such primitive characters $\theta$ is given in Table \ref{Table:character_orbit_count_ramified} when $\Psi_p$ is tamely ramified, and in Table \ref{Table:character_orbit_count_unram} when $\Psi_p$ is unramified.

\begin{table}[h]
\centering
\scalebox{0.95}{
\begin{tabular}{||c|c|c|c||c||}
  \hline
  $K$ & $e$ & $p$ & $n$ & \# Orbits \\
  \hline\hline
  --- & $1$ & $\neq 2$ & $\ge 1$ & $\sigma_0(m)$ \\
  \hline
  $\neq \Q_3(\sqrt{-3})$ & $2$ & $\neq 2$ & $1$ & $0$ \\
  \hline
  $\neq \Q_3(\sqrt{-3})$ & $2$ & $\neq 2$ & $\stackrel{n \ge2}{\text{odd} \, \vert\, \text{even}}$ & $0 \, \vert \, 1$ \\
  \hline
  $\Q_3(\sqrt{-3})$ & $2$ & $3$ & $1 \, \vert \, 2$ & $0 \, \vert \, 1$ \\
  \hline
  $\Q_3(\sqrt{-3})$ & $2$ & $3$ & $\stackrel{n\ge 3}{\text{odd} \, \vert \, \text{even}}$ & $0 \, \vert \, 3$ \\
  \hline
\end{tabular}
}
\vspace{2mm}
\caption{Number of local type Galois orbits of primitive characters for odd primes with a \textbf{tamely ramified} quadratic nebentypus $\Psi_p$.}
\label{Table:character_orbit_count_ramified}
\end{table}

\begin{table}[h]
\centering
\scalebox{0.95}{
\begin{tabular}{||c|c|c|c||c||}
  \hline
  $K$ & $e$ & $p$ & $n$ & \# Orbits \\
  \hline\hline
  --- & $1$ & $\neq 2$ & $1$ & $\sigma_0(p+1)-1$ \\
  \hline
  --- & $1$ & $\neq 2$ & $\ge 2$ & $\sigma_0(p+1)$ \\
  \hline
  $\neq \Q_3(\sqrt{-3})$ & $2$ & $\neq 2$ & $1$ & $1$ \\
  \hline
  $\neq \Q_3(\sqrt{-3})$ & $2$ & $\neq 2$ & $\stackrel{n \ge2}{\text{odd} \, \vert\, \text{even}}$ & $0 \, \vert \, 1$ \\
  \hline
  $\Q_3(\sqrt{-3})$ & $2$ & $3$ & $1 \, \vert \, 2$ & $1 \, \vert \, 1$ \\
  \hline
  $\Q_3(\sqrt{-3})$ & $2$ & $3$ & $\stackrel{n\ge 3}{\text{odd} \, \vert \, \text{even}}$ & $0 \, \vert \, 3$ \\
  \hline
\end{tabular}
}
\vspace{2mm}
\caption{Number of local type Galois orbits of primitive characters for odd primes with an \textbf{unramified} quadratic nebentypus $\Psi_p$.}
\label{Table:character_orbit_count_unram}
\end{table}
\end{thm}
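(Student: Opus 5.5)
We count inertial types through the restriction $\theta|_{\mco_K^\times}$. The type of $\Ind\theta$ on inertia is determined by $\theta|_{\mco_K^\times}$ up to replacing $\theta$ by $\theta^\iota$. By the lemma quoted from \cite[Lemma 2.9]{dpt}, $\sigma$ acts by $\theta\mapsto\sigma\circ\theta$. On a finite-order character this is $\theta\mapsto\theta^r$ with $r$ coprime to the order of $\theta$. Since $\Psi_p^{-1}\omega_{K/\Q_p}$ is quadratic, any $r$ coprime to an even order is odd and preserves the central character condition \eqref{central-ch-condn}. So for each row of the tables we will count characters $\chi$ of $(\mco_K/\id{p}^{n'})^\times$ with three properties: $\chi|_{\Z_p^\times}$ is prescribed, $\chi$ has exact conductor, and $\chi$ is primitive, meaning $\chi$ is not of the form $(\eta\circ\mathrm{Norm})\cdot\chi'$ with $a(\chi')<a(\chi)$. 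Here $n'$ is the conductor of $\theta$, determined by the Artin conductor $n$ through \eqref{eq:conductor_Np}. We then count orbits of such $\chi$ under $\chi\mapsto\chi^r$ and $\chi\mapsto\chi^\iota$.

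We work with the explicit decomposition of Theorem~\ref{units-in-residue}. The tame part ($\F_{p^2}^\times$ or $\F_p^\times$) has order prime to $p$ and the wild part is a $p$-group, so $\chi=\chi_{\rm tame}\chi_{\rm wild}$. By CRT the exponent action splits, and the number of orbits is the product of the orbit counts of the two parts, after accounting for $\iota$.

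\emph{Unramified $K$.} Here $\omega_{K/\Q_p}$ is trivial on $\Z_p^\times$, so $\chi_{\rm tame}|_{\F_p^\times}=\Psi_p^{-1}$. If $\Psi_p$ is unramified, $\chi_{\rm tame}$ factors through $\F_{p^2}^\times/\F_p^\times$, which is cyclic of order $p+1$. Its characters of order $d$ form one orbit for each $d\mid p+1$, giving $\sigma_0(p+1)$ orbits. For $n'=1$ the orbit $d=1$ factors through the norm and is excluded, giving $\sigma_0(p+1)-1$. If $\Psi_p$ is tamely ramified, $\chi_{\rm tame}$ restricts to the quadratic character of $\F_p^\times$. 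Write $\chi_{\rm tame}=\chi_0\cdot\epsilon$, where $\chi_0$ is a fixed character of order $2^{v_2(p^2-1)}$ with this restriction and $\epsilon$ has odd order dividing $p+1$. Orbits then correspond to the odd divisors of $p+1$, giving $\sigma_0(m)$. None of these factors through the norm, since their restriction to $\F_p^\times$ is not a square.

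For $n'\ge 2$, the wild part must be trivial on $1+p$ and of exact conductor on $1+p\sqrt d$. The group $\langle 1+p\sqrt d\rangle$ is cyclic of order $p^{n'-1}$, so these characters form a single Galois orbit, and primitivity is automatic. The $\iota$-identification fixes the orbit count, because $\iota$ acts by $\chi_{\rm tame}\mapsto\chi_{\rm tame}^p$, which already lies in the exponent orbit.

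\emph{Ramified $K$, $K\ne\Q_3(\sqrt{-3})$.} Now $\mco_K^\times/U_K^1=\F_p^\times=$ the image of $\Z_p^\times$, so $\chi_{\rm tame}$ is forced to equal $\Psi_p^{-1}\omega_{K/\Q_p}$. For $n'=0$, i.e.\ $n=1$: if $\Psi_p$ is unramified this is the quadratic character, which does not factor through the norm, giving one orbit. If $\Psi_p$ is tamely ramified it is trivial and we get $\theta$ unramified on units, which is not primitive, giving $0$.

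For higher conductor, we use that $U_K^{2j}\subset (1+p^j\Z_p)\,U_K^{2j+1}$. Hence a character whose conductor is odd, i.e.\ $a(\chi)=2j+1$, is determined on its top layer by its restriction to $\Z_p^\times$. It is therefore a norm-twist of a lower-conductor character and not primitive. This gives $0$ for Artin conductor $n=1+a(\theta)$ odd and $\ge 3$. For even $n$, the top layer is generated by a power of $1+\sqrt d$. We will show that characters with nontrivial top layer and prescribed restriction to $1+p$ form a single orbit, modulo the norm-twists they are identified with. Using the $(a,b)$ description of the table, this gives $1$.

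\emph{$K=\Q_3(\sqrt{-3})$.} This case is handled separately. The extra $\Z/3$ factor generated by $\xi_3$ gives additional freedom for $\chi(\xi_3)$, and we must count orbits of the pair (value on $\xi_3$, top layer) under exponents and $\iota$. We expect this to yield $1$ at $n=2$ and $3$ for even $n\ge4$.

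The main obstacle is bookkeeping rather than theory. We must check precisely which characters count as primitive versus norm-twists in the ramified case, and verify that the $\iota$-action never merges or splits orbits beyond the exponent action. The $\Q_3(\sqrt{-3})$ count is the most delicate case, and we would verify it by direct enumeration for small $n$ first.
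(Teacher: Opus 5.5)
Your numbers match the tables only because several errors cancel: you are counting different objects from the ones the paper counts. In the tables, $n$ is $a(\theta)$ itself (the statement says $\theta$ has conductor $n$). ``Primitive of conductor $n$'' means \emph{exact} conductor $n$. Characters that factor through $N_{K/\Q_p}$ are still counted here; they are removed only later, via Lemma~\ref{inertia_grand_lemma_complete}, in the proof of Theorem~\ref{thm:LTformula_nebentypus}. You instead read $n$ as the Artin conductor, and you call $\chi$ primitive if it is not a norm-twist of a character of smaller conductor. This causes three indexing problems:
\begin{itemize}
\item Under your reading the row $e=1$, $n=1$ would be empty, since the Artin conductor $2a(\theta)$ is even.
\item ``$n'=0$, i.e.\ $n=1$'' with a quadratic tame part is impossible, because $a(\theta)=0$ forces $\theta|_{\mco_K^\times}=1$.
\item Your top-layer argument rules out odd $a(\theta)$, i.e.\ \emph{even} Artin conductor, not odd as you write.
\end{itemize}

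More seriously, your norm claims are false when $p\equiv1\pmod4$. Then there is a character $\eta$ of order $4$ on $\F_p^\times$ with $\eta^2=\left(\frac{\cdot}{p}\right)$.
\begin{itemize}
\item For $\Psi_p$ tamely ramified and $K$ unramified, the characters $\eta^{\pm1}\circ N_{K/\Q_p}$ satisfy \eqref{central-ch-condn} and have conductor $1$; their restriction to $\F_p^\times$ \emph{is} a square.
\item For $K$ ramified and $\Psi_p$ unramified, the conductor-$1$ character equals $\eta\circ N_{K/\Q_p}$ on $\mco_K^\times$.
\item For $K$ and $\Psi_p$ both unramified at conductor $1$ (any $p$), the orbit $d=1$ drops out because the trivial character has conductor $0$. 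The orbit $d=2$ is $\left(\frac{\cdot}{p}\right)\circ N_{K/\Q_p}$, which is a norm-twist of the trivial character.
\end{itemize}
With your definition these rows would be $\sigma_0(p+1)-2$, $\sigma_0(m)-1$ and $0$ (the last two for $p\equiv1\pmod4$), contradicting the tables. The repair is to count characters of exact conductor $a(\theta)=n$ and drop the norm discussion entirely. Your odd-layer argument then simplifies: $U_K^{2j}\subset(1+p^j\Z_p)U_K^{2j+1}$ and $\theta$ is trivial on $1+p\Z_p$, so no $\theta$ of exact conductor $2j+1\ge3$ exists at all. With these changes, your tame analysis and odd-layer argument coincide with the paper's.

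Second, two parts are only announced, not proved.
\begin{itemize}
\item \emph{Ramified $K\ne\Q_3(\sqrt{-3})$, $n=2k$.} You need that $\theta(\xi_{p-1})$ and $\theta(1+p)$ are fixed by \eqref{central-ch-condn}, while exact conductor forces $\theta(1+\varpi)$ to be a primitive $p^k$-th root of unity; all such choices are Galois conjugate. The involution $\iota$ merges nothing, since $(1+\varpi)(1-\varpi)\in1+p\Z_p$ gives $\theta^\iota=\theta^{-1}$ on $\mco_K^\times$.
\item \emph{$K=\Q_3(\sqrt{-3})$.} You state only an expectation. Your ramified heading also omits this field's rows $n=1$ and odd $n\ge3$, although the same tame and odd-layer arguments handle them. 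The missing count goes as follows. The values $\theta(-1)$ and $\theta(4)$ are fixed, and $\theta(\xi_3)\in\mu_3$ is free. For $n=2k\ge4$, exact conductor forces $\theta$ to take a primitive $3^{k-1}$-th root of unity on the remaining generator of $U_K^1$. One can take $1+\varpi^3$, whose $3^{k-2}$-th power spans $U_K^{2k-1}/U_K^{2k}$. This gives $3\varphi(3^{k-1})$ characters. The Galois action on them factors through $\Gal(\Q(\mu_{3^{k-1}})/\Q)$, since $\mu_3\subset\Q(\mu_{3^{k-1}})$ for $k\ge2$, and it is free. Hence there are $3$ orbits. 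At $n=2$ only the condition $\theta(\xi_3)\ne1$ matters, giving one orbit.
\end{itemize}

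A minor slip: your $\chi_0$ has order $2^{v_2(p+1)+1}$, not $2^{v_2(p^2-1)}$; these differ when $p\equiv1\pmod4$. The count $\sigma_0(m)$ is unaffected.
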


\begin{proof}
Using the group structure and generators given in Table \ref{Table:units-in-residue}, we evaluate the possible character values at each generator. We distinguish the cases according to the ramification of $K/\Q_p$. When $\Psi_p$ is unramified, the result follows from the statement and proof of \cite[Theorem 2.11]{dpt}. We only note that in the situation $n=1$, with $K/\Q_p$ unramified and $\Psi_p$ unramified, excluding the trivial character reduces the count to $\sigma_0(p+1)-1$ orbits, rather than $\sigma_0(p+1)$ as stated in loc.\ cit. Therefore, in what follows, we focus on the case where $\Psi_p$ is tamely ramified.

\smallskip
\noindent
\underline{$K/\Q_p$ unramified:}
Here $\omega_{K/\Q_p}$ is unramified, so $\theta|_{\Z_p^\times}=\Psi_p^{-1}$. Since $\xi_{p-1}\in\Z_p^\times$, we have
\[
\theta(\xi_{p^2-1})^{p+1}=\Psi_p^{-1}(\xi_{p-1}).
\]

As $\Psi_p$ is tamely ramified, we have $\Psi_p^{-1}(\xi_{p-1})=-1$. Thus $\theta(\xi_{p^2-1})^{p+1}=-1$, implying its order divides $2(p+1)$ but not $p+1$. Writing $p+1=2^k m$ with $m$ odd, the possible orders are $2^{k+1}d$ with $d\mid m$, giving $\sigma_0(m)$ orbits for all $n$. As $1+p \in \Z_p^\times$, character values on this generator are fixed by the central character condition. Wild generators only control the conductor and do not affect the orbit count, so this holds for all $n\ge1$.

\smallskip
\noindent
\underline{$K/\Q_p$ ramified:}
The central character condition fixes $\theta$ on $\Z_p^\times$. Let $\varpi$ denote $\sqrt{d}$, the uniformizer of $\Q_p(\sqrt{d})$.

\emph{Case $n=1$.}
Then $\theta$ factors through $\F_p^\times=\langle \xi_{p-1}\rangle$. As $\Psi_p$ is tamely ramified, the central character condition forces $\theta(\xi_{p-1})=1$, so $\theta$ is trivial on $\mco_K^\times$, contradicting primitivity; hence no orbits occur.

\emph{Case $n \ge 2$ odd.}

Write $n=2k+1$. Then primitivity requires $\theta$ to be nontrivial on the quotient $U_K^{(2k)}/U_K^{(2k+1)}$, which is generated by $1+p^k$. However, $1+p^k\in 1+p\Z_p$, and the central character condition forces $\theta$ to be trivial on all such elements. This contradiction shows that no primitive characters exist.

\emph{Case $n=2k\ge2$ even.}
If $K\neq \Q_3(\sqrt{-3})$, the values on $\xi_{p-1}$ and $1+p$ are fixed to $1$ by the central character condition on $\Z_p^\times$, whereas the value $\theta(1+\varpi)$ can be any of the $\phi(p^k)$ primitive $p^k$-th roots of unity. As roots of unity with the same order are Galois conjugate, all such characters merge into exactly $1$ Galois orbit for any $n \ge 2$.

If $K=\Q_3(\sqrt{-3})$, the value of $\theta(4)$ is fixed by the central character condition. There is an additional torsion generator $\xi_3$ in $\mco_K^\times$. For $n=2$, the subgroup generated by $1+\varpi$ is trivial, so primitivity forces $\theta(\xi_3)\neq 1$, giving two possibilities which form a single Galois orbit. For $n\ge4$, the value $\theta(1+\varpi)$ must be a primitive $3^{k-1}$-th root of unity, giving $\phi(3^{k-1})$ choices, and $\theta(\xi_3)$ contributes $3$ independent choices. Hence there are $3\phi(3^{k-1})$ primitive characters in total. Fixing character value on $\zeta_3$ forces the character value on the generator $1+\sqrt{-3}$. Consequently, we cannot treat the orbits of these two generators independently. Because $k \ge 2$, the image of $\theta(\zeta_3)$ lies entirely within the cyclotomic field $\Q(\xi_{3^{k-1}})$ generated by $\theta(1+\varpi)$, and therefore the action of the absolute Galois group on $\theta$ factors through $\Gal(\Q(\xi_{3^{k-1}})/\Q)$. Since any automorphism stabilizing $\theta$ must fix the primitive $3^{k-1}$-th root of unity $\theta(1+\varpi)$, this action is free. Therefore, each orbit has size $\phi(3^{k-1})$, yielding a total of $\frac{3\phi(3^{k-1})}{\phi(3^{k-1})}=3$ orbits.
\end{proof}

Let $K/\Q_p$ be a quadratic extension, and let $\theta$ be a character of the Weil group of $K$. Recall that when $p$ is odd, every supercuspidal representation arises, via the local Langlands correspondence, from a Weil-Deligne representation with trivial monodromy that is obtained by inducing $\theta$ from the Weil group of $K$ to that of $\Q_p$. Moreover, this induced representation is irreducible precisely when $\theta$ does not factor through the norm map from $K^\times$ to $\Q_p^\times$.

\begin{lemma} \label{inertia_grand_lemma_complete}
    Let $p$ be an odd prime, $K/\Q_p$ a quadratic extension, and $\Psi_p$ a quadratic character of $\Q_p^\times$. Let $\theta: K^\times \to \C^\times$ be a character such that $\theta|_{\Z_p^\times} = \omega_{K/\Q_p} \Psi_p^{-1}$. If $\theta$ factors through the norm map, the local type Galois orbits of $\theta$ are exactly given by:
    
    \begin{itemize}
        \item \textbf{When $\Psi_p$ is unramified:}
        \begin{itemize}
            \item $K/\Q_p$ unramified: exactly $1$ orbit of conductor $0$, and $1$ orbit of conductor $1$.
            \item $K/\Q_p$ ramified: exactly $1$ orbit of conductor $1$ if $p \equiv 1 \pmod 4$, and $0$ orbits if $p \equiv 3 \pmod 4$.
        \end{itemize}
        \item \textbf{When $\Psi_p$ is tamely ramified:}
        \begin{itemize}
            \item $K/\Q_p$ unramified: exactly $1$ orbit of conductor $1$ if $p \equiv 1 \pmod 4$, and $0$ orbits if $p \equiv 3 \pmod 4$.
            \item $K/\Q_p$ ramified: exactly $1$ orbit of conductor $0$.
        \end{itemize}
    \end{itemize}
\end{lemma}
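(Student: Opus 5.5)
The plan is to write $\theta = \chi \circ \mathrm{Norm}_{K/\Q_p}$ for a character $\chi$ of $\Q_p^\times$ and to translate the central character condition into a condition on $\chi$. Restricting to $\Z_p^\times$ gives
\[
\chi(\mathrm{Norm}(u)) = \chi(u^2) = \chi^2(u) = \omega_{K/\Q_p}(u)\Psi_p^{-1}(u), \qquad u \in \Z_p^\times .
\]
So the condition is exactly $\chi^2|_{\Z_p^\times} = \omega_{K/\Q_p}\Psi_p^{-1}|_{\Z_p^\times}$.

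Only inertial types matter, so it suffices to understand $\chi|_{\Z_p^\times}$. Indeed, $\mathrm{Ind}\,\theta$ then decomposes as $\chi \oplus \chi\omega_{K/\Q_p}$ on $W(\Q_p)$. I will track two things: the restriction of $\chi$ to $\Z_p^\times$, and the conductor of $\theta = \chi\circ \mathrm{Norm}$ as a character of $K^\times$.

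\textbf{Step 1: $\chi$ is tame.} Since $p$ is odd, $1+p\Z_p$ is uniquely $2$-divisible by Lemma~\ref{Hensel}. The right-hand side $\omega_{K/\Q_p}\Psi_p^{-1}$ is trivial on $1+p\Z_p$ by Lemma~\ref{L:quad-cond}. Hence $\chi$ is trivial on $1+p\Z_p$, and $\chi|_{\Z_p^\times}$ factors through $\F_p^\times = \langle \xi_{p-1}\rangle$. Set $\varepsilon := (\omega_{K/\Q_p}\Psi_p^{-1})(\xi_{p-1}) \in \{\pm 1\}$. Then $\varepsilon = 1$ exactly when $\omega_{K/\Q_p}$ and $\Psi_p$ are both unramified or both tamely ramified, and $\varepsilon = -1$ in the two mixed cases.

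\textbf{Step 2: solve $\chi(\xi_{p-1})^2 = \varepsilon$.}
\begin{itemize}
\item If $\varepsilon = 1$, then $\chi(\xi_{p-1}) = \pm 1$. This gives the trivial restriction and the restriction equal to the Legendre character. These are two inertial types, and each is Galois stable because its values are rational, so they form two separate orbits.
\item If $\varepsilon = -1$, then $\chi(\xi_{p-1})$ must be a primitive fourth root of unity. This requires $4 \mid p-1$, i.e.\ $p \equiv 1 \pmod 4$. When it holds, the two solutions $\chi$ and $\chi^{-1}$ are Galois conjugate under complex conjugation, giving one orbit. When $p \equiv 3 \pmod 4$ there is no solution and hence no orbit.
\end{itemize}
This accounts for the mixed cases: $\Psi_p$ unramified with $K$ ramified, and $\Psi_p$ tamely ramified with $K$ unramified.

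\textbf{Step 3: conductors of $\theta$.} For $K$ unramified, the norm maps $\mco_K^\times$ onto $\Z_p^\times$ and $\F_{p^2}^\times$ onto $\F_p^\times$. Hence $\theta$ has conductor $0$ if $\chi$ is unramified and conductor $1$ otherwise. In the case ($\Psi_p$ unramified, $K$ unramified), the two solutions therefore give one orbit of conductor $0$ and one of conductor $1$. For $K$ ramified, the residue field of $K$ is $\F_p$ and $\mathrm{Norm}(u) \equiv u^2 \pmod{\varpi}$ on units. Thus $\theta|_{\mco_K^\times}$ is $\chi^2$ on $\F_p^\times$.
\begin{itemize}
\item Case ($\Psi_p$ tamely ramified, $K$ ramified): $\varepsilon = 1$, so $\chi^2$ is trivial on units and both solutions give $\theta$ of conductor $0$. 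They have the same restriction to inertia of $K$.
\item Case ($\Psi_p$ unramified, $K$ ramified): $\chi^2$ is the Legendre symbol, which is nontrivial, so $\theta$ has conductor $1$.
\end{itemize}

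\textbf{Main obstacle.} The delicate point is deciding what counts as an orbit when $K$ is ramified with $\varepsilon = 1$. The two solutions $\chi$ and $\chi\cdot(\text{Legendre})$ are exactly $\chi$ and $\chi\omega_{K/\Q_p}$ up to an unramified twist. They therefore give the same induced type $\chi \oplus \chi\omega_{K/\Q_p}$ on inertia, and equally the same $\theta$ on $\mco_K^\times$. So they should be counted as a single orbit of conductor $0$, as the statement claims.

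By contrast, in the case $K$ unramified with $\varepsilon = 1$, we have $\omega_{K/\Q_p}$ unramified. The two solutions then give genuinely different inertial types, the trivial type and the tame Legendre type, hence two orbits. I would check this identification carefully against the type-level definition of orbits, since it is what separates the count of $1$ from the count of $2$.
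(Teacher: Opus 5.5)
Your proof is correct and follows the paper's route. You write $\theta=\varphi\circ N_{K/\Q_p}$, use Lemma \ref{Hensel} to show $\varphi$ is tame, and solve $\varphi^2=\omega_{K/\Q_p}\Psi_p^{-1}$ on $\F_p^\times$, separating $K$ unramified (norm surjective on units) from $K$ ramified (where $\theta|_{\mco_K^\times}$ is $\varphi^2$ on the residue field). The only differences are that you handle the unramified-$\Psi_p$ cases directly through the sign $\varepsilon$ instead of citing \cite[Lemma 2.12]{dpt}, and you use $N(u)\equiv \bar u^{2}$ in place of the paper's $N_{K/\Q_p}(\mco_K^\times)=(\Z_p^\times)^2$ together with its well-definedness check $\omega_{K/\Q_p}(-1)\Psi_p(-1)=1$.
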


\begin{proof}
Write $\theta=\varphi\circ N_{K/\Q_p}$. The central character condition gives
\[
\varphi(x)^2=\omega_{K/\Q_p}(x)\Psi_p(x)\quad \text{for }x\in\Z_p^\times.
\]
Since the right-hand side is trivial on $1+p\Z_p$ and $p\neq2$, by Lemma \ref{Hensel} it follows that $\varphi$ is trivial on $1+p\Z_p$, hence factors through $\F_p^\times$. As before, the unramified nebentypus case follows from the statement and proof of \cite[Lemma 2.12]{dpt}. So we only consider the case where $\Psi_p$ is tamely ramified.

\smallskip
\noindent
\textbf{Case 1: $K/\Q_p$ unramified.}
Here $N_{K/\Q_p}(\mco_K^\times)=\Z_p^\times$. Because of this surjectivity, $\theta$ and $\varphi$ uniquely determine one another. As $\Psi_p$ is tamely ramified, the equation $\varphi^2=\Psi_p$ has solutions if and only if $p \equiv 1 \pmod 4$, in which case the two solutions are conjugate, giving one orbit of conductor $1$; otherwise, there are none.

\smallskip
\noindent
\textbf{Case 2: $K/\Q_p$ ramified.}
Here $N_{K/\Q_p}(\mco_K^\times) = (\Z_p^\times)^2$. For $y \in \mco_K^\times$, write $N_{K/\Q_p}(y)=a^2$ with $a \in \Z_p^\times$. Then
\[
\theta(y) = \varphi(a^2) = \varphi^2(a) = \omega_{K/\Q_p}(a)\Psi_p(a),
\]
so $\theta$ is independent of the choice of $\varphi$. To ensure that $\theta$ is well-defined, the above expression must be independent of the choice of $a$, that is, invariant under $a \mapsto -a$. This requires
\[
\omega_{K/\Q_p}(-1)\Psi_p(-1)=1.
\]

As $\Psi_p$ is tamely ramified, then $\omega_{K/\Q_p}=\Psi_p$ on $\Z_p^\times$, so the condition is satisfied. In this case $\theta$ is trivial on $\mco_K^\times$, giving a unique orbit of conductor $0$, which is the orbit of the trivial character.
\end{proof}

\begin{thm}\label{thm:LTformula_nebentypus}
  Let $p \neq 2$ be a prime number. Let $m$ be the largest odd divisor of $p+1$. The values of $\mathbf{LT}(p^n, \Psi)$ for newforms with a non-trivial quadratic nebentypus $\Psi$ are given in Table \ref{table:nebentypus_ramified} when the local nebentypus $\Psi_p$ is tamely ramified, and in Table \ref{table:nebentypus_unramified} when $\Psi_p$ is unramified. 

\begin{table}[h]
\centering
\scalebox{0.95}{
\begin{tabular}{||l|c|c|c|c||}
\hline
$n$ & P.S. & St & S.C.U. & S.C.R. \\
\hline\hline
1 & 1 & --- & --- & --- \\
\hline
2 & $\sigma_0\left(\frac{p-1}{2}\right)-1$ & 
$\begin{cases} 1 & p \equiv 1 \pmod{4} \\ 0 & p \equiv 3 \pmod{4} \end{cases}$ & 
$\begin{cases} \sigma_0(m)-1 & p \equiv 1 \pmod{4} \\ \sigma_0(m) & p \equiv 3 \pmod{4} \end{cases}$ 
& --- \\

\hline
$\substack{p \neq 3 \\ n \ge 3 \text{ odd}}$ & --- & --- & --- & 2 \\
\hline
$\substack{p = 3 \\ n = 3 }$ & --- & --- & --- & 2 \\
\hline
$\substack{p = 3 \\ n \ge 5 \text{ odd}}$ & --- & --- & --- & 4 \\
\hline   
$n \ge 4 \text{ even}$ & $\sigma_0\left(\frac{p-1}{2}\right)$ & --- & $\sigma_0(m)$ & --- \\
\hline
\end{tabular}
}
\vspace{2mm}
\caption{Values for $\mathbf{LT}(p^n, \Psi)$ with a \textbf{tamely ramified} quadratic nebentypus $\Psi_p$.}
\label{table:nebentypus_ramified}
\end{table}

\begin{table}[h]
\centering
\scalebox{0.95}{
\begin{tabular}{||l|c|c|c|c||}
\hline
$n$ & P.S. & St & S.C.U. & S.C.R. \\
\hline\hline
1 & --- & 1 & --- & --- \\
\hline
2 & $\sigma_0(p-1)-1$ & 1 & $\sigma_0(p+1)-2$ & --- \\
\hline
$\substack{p \neq 3 \\ n \ge 3 \text{ odd}}$ & --- & --- & --- & 2 \\
\hline
$\substack{p = 3 \\ n = 3 }$ & --- & --- & --- & 2 \\
\hline
$\substack{p = 3 \\ n \ge 5 \text{ odd}}$ & --- & --- & --- & 4 \\
\hline   
$n \ge 4 \text{ even}$ & $\sigma_0(p-1)$ & --- & $\sigma_0(p+1)$ & --- \\
\hline
\end{tabular}
}
\vspace{2mm}
\caption{Values for $\mathbf{LT}(p^n, \Psi)$ with an \textbf{unramified} quadratic nebentypus $\Psi_p$.}
\label{table:nebentypus_unramified}
\end{table}
\end{thm}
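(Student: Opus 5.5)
The plan is to compute $\mathbf{LT}(p^n,\Psi)$ column by column, summing orbits over principal series, Steinberg, unramified supercuspidal (S.C.U.) and ramified supercuspidal (S.C.R.) types. Throughout we impose the conductor constraint (Artin conductor exactly $n$) and the central character constraint (restriction to $\Z_p^\times$ equal to $\Psi_p^{-1}$). We then use the fact that $\sigma\in\Gal(\C/\Q)$ acts on inertial types by composing characters with $\sigma$. Since only the restriction to inertia matters, each type is determined by characters of $\Z_p^\times$ (principal series and Steinberg) or of $\mco_K^\times$ (supercuspidals). Their Galois orbits are classified by the order of the tame part, because the wild part is forced or absorbed as in the proof of Theorem \ref{thm:orbits_primitive}.

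\emph{Principal series.} Here $\pi(\mu_1,\mu_2)$ has conductor $a(\mu_1)+a(\mu_2)=n$ and $\mu_1\mu_2|_{\Z_p^\times}=\Psi_p^{-1}$. Since $\Psi_p$ has conductor $\le 1$, the two conductors can differ only if one is $\le 1$.
\begin{itemize}
\item For $n\ge 2$ we need $a(\mu_1)=a(\mu_2)=n/2$, so $n$ must be even, with $n=2$ treated separately. Odd $n\ge3$ is impossible.
\item Writing $\mu_2|_{\Z_p^\times}=\Psi_p^{-1}\mu_1^{-1}$, the inertial type is the unordered pair $\{\mu_1,\Psi_p^{-1}\mu_1^{-1}\}$. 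Its Galois orbit is governed by the order of $\mu_1(\xi_{p-1})$, subject to the symmetry $\mu_1\leftrightarrow \Psi_p\mu_1^{-1}$.
\item When $\Psi_p$ is unramified this reproduces the count $\sigma_0(p-1)$ of \cite{dpt}. For $n=2$ we subtract the degenerate case $\mu_1$ unramified, which has conductor too small.
\item When $\Psi_p$ is tamely ramified, we reparametrize by $\mu_1=\chi\cdot\eta$ with $\eta^2=\Psi_p^{-1}$ where possible. More concretely, we count pairs of tame characters whose product is the order-2 character. The orbit classes then correspond to divisors of $(p-1)/2$, giving $\sigma_0((p-1)/2)$, minus one for $n=2$, where both factors would need to be ramified tame.
\item For $n=1$ with $\Psi_p$ tame we take $\mu_1$ unramified and $\mu_2=\Psi_p^{-1}$, which gives one orbit.
\end{itemize}

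\emph{Steinberg.} For $\mathrm{St}\otimes\mu$ the central character is $\mu^2$, so $\mu^2|_{\Z_p^\times}=\Psi_p^{-1}$. If $\Psi_p$ is unramified, $\mu$ is unramified (conductor $1$) or quadratic tame (conductor $2$), giving one orbit each. If $\Psi_p$ is tame, $\mu$ must satisfy $\mu^2=\Psi_p$ on $\F_p^\times$. This is solvable iff $p\equiv1\pmod 4$, with conductor $2a(\mu)=2$, and the two solutions are conjugate, giving one orbit. This is the same computation as Case 1 of Lemma \ref{inertia_grand_lemma_complete}.

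\emph{Supercuspidals.} By \eqref{eq:conductor_Np}, the S.C.U. case has $n=2a(\theta)$, so $n$ is even and $a(\theta)=n/2$. The S.C.R. case has $n=1+a(\theta)$.
\begin{itemize}
\item A character $\theta$ of conductor $c$ with the central character condition is either primitive of conductor $c$, or has the form $\theta'\cdot(\varphi\circ N)$ with $\theta'$ primitive of smaller conductor and $\varphi$ tame. Following the strategy of \cite[Theorem 2.11]{dpt}, we count the total orbits as primitive orbits of conductor $c$ (Theorem \ref{thm:orbits_primitive}) minus those that factor through the norm (Lemma \ref{inertia_grand_lemma_complete}), since the latter give reducible induced representations and are not supercuspidal.
\item For S.C.U. with $\Psi_p$ tame and $n=2$, we take $\sigma_0(m)$ and subtract the norm-factoring orbit of conductor $1$ present iff $p\equiv1\pmod4$. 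For $n\ge4$ nothing is subtracted. The unramified-$\Psi_p$ entries $\sigma_0(p+1)-2$ and $\sigma_0(p+1)$ follow likewise: for $n=2$, the count $\sigma_0(p+1)-1$ loses one conductor-$1$ norm orbit.
\item For S.C.R., $a(\theta)=n-1$ must be even by Theorem \ref{thm:orbits_primitive}, since odd conductors $\ge2$ give $0$ and conductor $1$ gives no primitive characters in the tame case. Hence $n$ is odd $\ge3$.
\item There are two ramified quadratic extensions $K$, each contributing $1$ orbit, for a total of $2$. For $p=3$ and $n\ge5$, the extension $\Q_3(\sqrt{-3})$ contributes $3$ orbits, but we must quotient by the identification of inertial types $\theta\sim\theta^\iota$. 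We expect the total to come out as $4$; for $n=3$ each extension contributes one orbit.
\end{itemize}

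\emph{Main obstacle.} The delicate point is the bookkeeping of identifications: $\theta\sim\theta^\iota$ for supercuspidals, unordered pairs for principal series, and the exclusion of norm-factoring characters and of characters whose actual conductor is smaller. The $p=3$ case with $K=\Q_3(\sqrt{-3})$ needs special care: we plan to check directly that conjugation by $\iota$ acts on the $3$ orbits found in Theorem \ref{thm:orbits_primitive}, so that the combination with the other ramified extension yields exactly the tabulated $4$.
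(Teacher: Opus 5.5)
Your route is the paper's: you go column by column, cite \cite{dpt} for unramified $\Psi_p$, and count supercuspidals as primitive orbits from Theorem \ref{thm:orbits_primitive} minus norm-factoring orbits from Lemma \ref{inertia_grand_lemma_complete}. Two entries are not actually established, however.

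\textbf{S.C.R.\ at $n=2$ for unramified $\Psi_p$.} Your parity argument ("conductor $1$ gives no primitive characters in the tame case, hence $n$ is odd $\ge 3$") covers only tamely ramified $\Psi_p$. For unramified $\Psi_p$, Table \ref{Table:character_orbit_count_unram} gives one primitive orbit with $a(\theta)=1$ for \emph{each} ramified $K$. By Lemma \ref{inertia_grand_lemma_complete}, these factor through the norm only when $p\equiv 1\pmod 4$. So for $p\equiv 3\pmod 4$, your recipe "primitive minus norm-factoring" puts $2$ in the S.C.R.\ column at $n=2$, not "---".

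The missing idea is that these types are not new. Here $\theta$ is the Legendre character on $\mco_K^\times$, and $\Ind\theta$ restricted to inertia is $\chi\oplus\chi^{-1}$, with $\chi$ a tame character of order $4$. Since $4\nmid p-1$, $\chi$ factors through $\F_{p^2}^\times$. This is exactly the inertial type of the unramified supercuspidal attached to the order-$4$ character of $\F_{p^2}^\times$ that is trivial on $\F_p^\times$, and that type is already among the $\sigma_0(p+1)-2$ S.C.U.\ types. These are the triply imprimitive (projective image $V_4$) representations. The paper discards them via Gérardin--Kutzko; without that step you double count.

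\textbf{$p=3$, $n\ge 5$.} You leave this as "we expect $4$", and you suggest the identification $\theta\sim\theta^\iota$ might merge some of the three $\Q_3(\sqrt{-3})$ orbits. If it did, the total would fall below $4$. The needed check is short. For $x\in\mco_K^\times$, the central character condition gives
\[
\theta^\iota(x)=\theta(\bar x)=\theta(N_{K/\Q_p}x)\,\theta(x)^{-1}=\Psi_p^{-1}(N_{K/\Q_p}x)\,\theta(x)^{-1},
\]
using $\omega_{K/\Q_p}\circ N_{K/\Q_p}=1$. For ramified $K$ we have $N_{K/\Q_p}(\mco_K^\times)=(\Z_p^\times)^2$, on which the quadratic $\Psi_p$ is trivial. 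Hence $\theta^\iota=\overline{\theta}$ on $\mco_K^\times$, which is a Galois conjugate of $\theta$. So $\iota$ fixes every Galois orbit, the three orbits stay distinct, and the total is $1+3=4$.

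The same computation settles your general bookkeeping worry for unramified $K$. There $\theta^\iota$ is $\theta^p$ on $\F_{p^2}^\times$ and $\theta^{-1}$ on $U_K^1$, which is again a Galois conjugate by CRT, so none of the S.C.U.\ counts need correcting.

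\textbf{Minor: principal series with tame $\Psi_p$.} Your step here is only a sketch. The helper $\eta$ with $\eta^2=\Psi_p^{-1}$ does not exist when $p\equiv 3\pmod 4$. Also, the orbits are not classified by the order of $\mu_1(\xi_{p-1})$ itself. For example, with $p=13$ the characters $\omega^2$ (order $6$) and $\omega^4$ (order $3$) lie in the same pair. What works is the paper's argument. Pass to the quotient of the tame character group by $\langle\Psi_p\rangle$, which is cyclic of order $(p-1)/2$. The two lifts $\mu_1$ and $\mu_1\Psi_p$ give pairs exchanged by inversion, so orbits correspond to cyclic subgroups of the quotient. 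This gives $\sigma_0\!\left(\frac{p-1}{2}\right)$, minus $1$ when $n=2$.
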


\begin{proof}
We compute $\mathbf{LT}(p^n,\Psi)$ by summing the valid Galois orbits across the principal series, Steinberg, and supercuspidal local types.

\noindent $\bullet$ {\bf Principal Series:} 
The local representation is of the form $\pi(\chi_1,\chi_2)$, with the central character condition $\chi_1\chi_2|_{\Z_p^\times}=\Psi_p^{-1}|_{\Z_p^\times}$. By Lemma \ref{L:quad-cond}, $\Psi_p$ has conductor at most $1$. The conductor is $n=a(\chi_1)+a(\chi_2)$.

For $n=1$, one has $\{a(\chi_1),a(\chi_2)\}=\{1,0\}$. Assume $\chi_2$ is unramified. Then $a(\chi_2)=0$ and $\chi_1|_{\Z_p^\times}=\Psi_p^{-1}|_{\Z_p^\times}$. If $\Psi_p$ is unramified, this forces $a(\chi_1)=0$, a contradiction, so there are no orbits. If $\Psi_p$ is tamely ramified, the inertial restrictions are uniquely $\{\Psi_p^{-1},1\}$, giving exactly one Galois orbit.

For $n\ge2$, the condition $a(\chi_1\chi_2)\le1$ forces $a(\chi_1)=a(\chi_2)$, since otherwise $\max(a(\chi_1),a(\chi_2))\le1$ and hence $n=1$. Thus $n=2a(\chi_1)$ is even; write $d=n/2$. The restriction of $\chi_1$ to inertia is a primitive character of $(\Z/p^d\Z)^\times$, whose wild part has exact order $p^{d-1}$, so Galois orbits are determined by the tame component. The central character condition gives $\chi_2=\chi_1^{-1}\Psi_p^{-1}$, so we count unordered pairs $\{\chi_1,\chi_1^{-1}\Psi_p^{-1}\}$ up to conjugacy.

\begin{itemize}
\item {\bf Unramified $\Psi_p$:} This follows from \cite[Theorem 2.7]{dpt}.
\item {\bf Tamely ramified $\Psi_p$:} Here $\Psi_p|_{\Z_p^\times}$ is quadratic. The pairs are $\{\chi_1,\chi_1^{-1}\Psi_p^{-1}\}$. Passing to the quotient of the tame character group by $\langle \Psi_p\rangle$ (of order $(p-1)/2$), these descend to pairs $\{\overline{\chi}_1,\overline{\chi}_1^{-1}\}$. The two lifts are exchanged by inversion and hence lie in the same Galois orbit, so orbits correspond to cyclic subgroups of the quotient. This gives $\sigma_0\!\left(\frac{p-1}{2}\right)$ orbits for $d>1$, and $\sigma_0\!\left(\frac{p-1}{2}\right)-1$ for $d=1$.
\end{itemize}

\smallskip
\noindent
\textbf{Steinberg.}
Let $\pi_p = \mathrm{St} \otimes \mu$. The central character condition requires $\mu^2 |_{\Z_p^\times} = \Psi_p^{-1}|_{\Z_p^\times}$. The conductor of this representation is $n = 1$ if the twist $\mu$ is unramified ($a(\mu) = 0$), and $n = 2a(\mu)$ if $\mu$ is ramified. 

Because $p \neq 2$, the squaring map on characters preserves the conductor for any ramified character. Since $\Psi_p^{-1}$ has conductor at most $1$, any valid twist $\mu$ must satisfy $a(\mu) \le 1$. Consequently, Steinberg types only contribute at $n=1$ and $n=2$.
\begin{itemize}
    \item If $\Psi_p$ is \textbf{unramified}: This follows from \cite[Theorem 2.7]{dpt}.
    \item If $\Psi_p$ is \textbf{tamely ramified}: The condition requires $\mu^2 |_{\Z_p^\times}$ to be the unique tamely ramified quadratic character. This implies $\mu |_{\Z_p^\times}$ must be a character of exact order $4$, forcing $a(\mu) = 1$ (which gives $n=2$). Such a character exists if and only if the cyclic group $(\Z/p\Z)^\times$ has order divisible by $4$, meaning $p \equiv 1 \pmod 4$. The two solutions are inverse to each other and thus Galois conjugate. This yields exactly $1$ Galois orbit at $n=2$ if $p \equiv 1 \pmod 4$, and $0$ orbits if $p \equiv 3 \pmod 4$.
\end{itemize}

\smallskip
\noindent
\textbf{Supercuspidal.}
These correspond to $\Ind_{W(K)}^{W(\Q_p)}\theta$, where $K/\Q_p$ is quadratic and $\theta$ does not factor through the norm map. The central character condition is $\theta|_{\Z_p^\times}=\Psi_p^{-1}\omega_{K/\Q_p}$.

\smallskip
\noindent
$\ast$ \textit{Unramified extensions (S.C.U.).}
The conductor of the representation is given by $n = 2a(\theta)$, ensuring $n$ is always even.
When $\Psi_p$ is tamely ramified, the number of primitive character orbits is $\sigma_0(m)$ for all $n \ge 2$ by Theorem \ref{thm:orbits_primitive}. At $n=2$, exactly one of these orbits factors through the norm if and only if $p \equiv 1 \pmod 4$ by Lemma \ref{inertia_grand_lemma_complete}. Subtracting this yields the counts in Table \ref{table:nebentypus_ramified}. For $n \ge 4$, no orbits factor through the norm by Lemma \ref{inertia_grand_lemma_complete}, leaving $\sigma_0(m)$.
When $\Psi_p$ is unramified, the number of primitive character orbits is $\sigma_0(p+1)-1$ for $n=2$, and $\sigma_0(p+1)$ for $n \ge 4$ by Theorem \ref{thm:orbits_primitive}. At $n=2$, exactly one orbit factors through the norm regardless of $p \pmod 4$ by Lemma \ref{inertia_grand_lemma_complete}. Subtracting this yields exactly $\sigma_0(p+1)-2$ valid orbits.

\smallskip
\noindent
$\ast$ \textit{Ramified extensions (S.C.R.).}
The conductor is given by $n = a(\theta) + 1$. Because valid primitive characters only exist when $a(\theta)$ is even by Theorem \ref{thm:orbits_primitive}, the representation conductor $n$ must be odd, with the exception of $a(\theta)=1$ ($n=2$) for unramified $\Psi_p$ cases.

When $\Psi_p$ is tamely ramified, no primitive characters exist for $a(\theta) \le 1$, meaning no orbits exist for $n \le 2$ (by Theorem \ref{thm:orbits_primitive}). For odd $n \ge 3$, the character conductor is $a(\theta) = n-1 \ge 2$, and no characters factor through the norm map by Lemma \ref{inertia_grand_lemma_complete}. At $a(\theta)=2$ ($n=3$), Theorem \ref{thm:orbits_primitive} dictates there is exactly $1$ primitive orbit for any $K$. Summing over the two ramified quadratic extensions yields $1+1=2$ orbits for all odd primes $p$ at $n=3$. For odd $n \ge 5$, we have $a(\theta) \ge 4$. Here, each extension with $K \neq \Q_3(\sqrt{-3})$ contributes exactly $1$ orbit. Summing over the two extensions gives $1+1=2$ orbits for $p \neq 3$. For $p=3$, the anomalous extension $\Q_3(\sqrt{-3})$ contributes $3$ orbits, elevating the total count to $1+3=4$ for $n \ge 5$.

When $\Psi_p$ is unramified, the counts differ at $a(\theta)=1$ ($n=2$). Here each extension contributes exactly $1$ orbit by Theorem \ref{thm:orbits_primitive}. These two orbits factor through the norm if $p \equiv 1 \pmod 4$ (yielding $0$ valid orbits). While both survive the norm map condition if $p \equiv 3 \pmod 4$, their resulting local inertial types are known to be isomorphic to unramified supercuspidal types (cf.\ \cite[2.7]{gerardinkutzko}). Because $\mathbf{LT}$ enumerates distinct local types, these orbits are already accounted for in the S.C.U.\ column. To avoid double-counting, we omit them from the S.C.R.\ column, yielding $0$ distinct orbits at $n=2$.
\end{proof}

\begin{remark}
    We note a minor correction to the literature regarding the base case of trivial nebentypus for $p=3$. In \cite[Theorem 2.7]{dpt}, it is asserted that there are exactly four Galois orbits of local inertial types for $p=3$ and odd conductor exponent $n \ge 3$. However, as pointed out in \cite[Remark 3.8]{KM}, this cannot hold for $n=3$, as it would require the existence of eight supercuspidal representations of conductor $3^3$, contradicting the known total of four. The correct number of local types for $n=3$ is two. Our tables reflect this bifurcation for $p=3$.
\end{remark}

\medskip

\section{Global Galois Invariants and Atkin-Li Pseudo-eigenvalues}
\label{sec: types-of-modular-forms}

Let $f =\sum a_n q^n \in S_k(N,\Psi)$ be a newform. Its Hecke field is denoted by $E_f=\Q(\{a_n\})$, obtained by attaching all Fourier coefficients of $f$ to $\Q$. For a prime $\ell$, let $E_{f, \ell}$ denote the completion of $E_f$ at $\ell$, $\mathcal{O}_\ell$ be the ring of integers in $E_{f, \ell}$, $\pi$ be a uniformizer in $\mathcal{O}_\ell$, and $\mathbb{F}_\ell = \mathcal{O}_\ell/\pi$ be the residue field. Let $\chi_\ell$ denote the $\ell$-adic cyclotomic character, and $\Frob_p$ denote an arithmetic Frobenius element at $p$. Deligne \cite{Deligne} attached a continuous irreducible $\ell$-adic Galois representation
$$ \rho_f=\rho_{f,\ell}: \Gal(\overline{\Q}/\Q) \to \GL_2(\mathcal{O}_\ell) $$ 
to $f$ such that $\rho_f$ is unramified outside $\ell N$ and 
\begin{equation} \label{Galois}
    \det(\rho_{f})=\chi_\ell^{k-1}\Psi, ~~ \mathrm{trace}(\rho_{f}(\Frob_p))=a_f(p) \text{ for } p \nmid \ell N.
\end{equation}

\begin{lemma} \label{p-part}
    Modular forms with the same local inertial type at a prime $p$ have the same $p$-part of their nebentypus.
\end{lemma}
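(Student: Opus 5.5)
The plan is to read the $p$-part of the nebentypus directly off the local inertial type, using the determinant of the Weil–Deligne representation. Let $f$ and $g$ be newforms with the same local inertial type at $p$. Write $\pi_{f,p}$, $\pi_{g,p}$ for their local components and $\tau_f=\tau(\pi_{f,p})$, $\tau_g=\tau(\pi_{g,p})$ for the corresponding Weil–Deligne representations under \eqref{bijective}.

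By hypothesis, $\tau_f|_{I_{\Q_p}} \cong \tau_g|_{I_{\Q_p}}$ as two-dimensional representations of inertia. Taking determinants of isomorphic representations gives
\[
\det\tau_f|_{I_{\Q_p}}=\det\tau_g|_{I_{\Q_p}}.
\]

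The next step is to identify $\det\tau$ with the central character $\omega_{\pi_p}$ under local class field theory. This is a standard property of the local Langlands correspondence, and it can also be checked case by case from the explicit descriptions in Section~\ref{sec:prelim}:
\begin{itemize}
\item for the principal series $\pi(\mu_1,\mu_2)$, we get $\det=\mu_1\mu_2$;
\item for $\mathrm{St}\otimes\mu$, we get $\det=\mu^2$, because the factors $|\cdot|^{\pm1/2}$ cancel;
\item for the dihedral supercuspidals, $\det\Ind_{W(K)}^{W(\Q_p)}\theta=\theta|_{\Q_p^\times}\omega_{K/\Q_p}$.
\end{itemize}
Under the reciprocity map, inertia $I_{\Q_p}^{ab}$ corresponds to $\Z_p^\times$. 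Restricting to inertia therefore gives $\omega_{\pi_{f,p}}|_{\Z_p^\times}=\omega_{\pi_{g,p}}|_{\Z_p^\times}$.

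Finally, I would use the global-to-local relation between the central character and the nebentypus. The central character of $\pi_f$ is the adelization of $\Psi_f$, possibly up to an inverse depending on convention, times a power of the norm. By \eqref{adelic} and the central character constraint stated before \eqref{central-ch-condn}, its restriction to $\Z_p^\times$ is $\Psi_{f,p}^{-1}$, and the norm factor is trivial on units. This gives $\Psi_{f,p}^{-1}=\Psi_{g,p}^{-1}$ on $\Z_p^\times$. Since $\Psi_{f,p}$ is a character of $(\Z/p^{N_p}\Z)^\times$ and is determined by its values on $\Z_p^\times$, it follows that $\Psi_{f,p}=\Psi_{g,p}$.

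For the Galois-theoretic version, I would instead use \eqref{Galois}. Here $\det\rho_f|_{I_p}=\chi_\ell^{k-1}\Psi|_{I_p}$ for $\ell\ne p$, with $\chi_\ell$ unramified at $p$. Local–global compatibility (Carayol) then gives the same conclusion.

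The main obstacle is bookkeeping rather than mathematics: keeping the normalizations consistent (the inverse in \eqref{adelic} and the twist by $|\cdot|^{(k-1)/2}$ or $|\cdot|^{k-1}$). Any such twist is unramified, so it does not affect the restriction to inertia. Once that is checked, the argument is immediate.
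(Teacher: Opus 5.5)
Your proof is correct and uses the same mechanism as the paper's: the determinant of the local type, restricted to inertia (identified with $\Z_p^\times$), equals $\Psi_p^{-1}$, and unramified twists do not affect this. The paper argues only on the Galois side, via \eqref{Galois} and the fact that $\chi_\ell$ is unramified at $p$; this is your secondary ``Galois-theoretic version,'' and it tacitly uses local--global compatibility to match $\rho_f|_{I_p}$ with $\tau(\pi_{f,p})|_{I_{\Q_p}}$. Your main route stays automorphic, through $\det\tau(\pi_p)=\omega_{\pi_p}$ and the central character constraint, which fits the paper's definition of the type via $\tau(\pi_{f,p})$ and needs no appeal to Carayol.
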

\begin{proof}
By the local Artin isomorphism, the inertia group at $p$ corresponds to $\Z_p^\times$. Hence, from Equation \ref{adelic}, the restriction of $\Psi$ to the inertia is just the inverse of $\Psi_p$. Also, the character $\chi_\ell$ is unramified at $p$. Therefore Equation \ref{Galois} implies $\det(\rho_{f}|_{I_p})=\Psi_p^{-1}$ and we conclude the result.
\end{proof}

\begin{lemma}\label{prin}

Let $f \in S_k(N,\Psi)$ be a newform with nebentypus $\Psi$ of exact order $m$. For a prime $p \mid N$, suppose $\pi_{f,p}$ is a principal series $\pi(\chi_1,\chi_2)$. Let $d$ be the order of $\chi_1|_{\mathbb{Z}_p^\times}$. Then $E_f$ contains the field $\mathbb{Q}(\xi_d + \zeta \xi_d^{-1})$ for some $m$-th root of unity $\zeta \in \mu_m$.

\end{lemma}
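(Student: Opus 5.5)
We will follow the strategy of \cite[Lemma 3.2]{dpt}, adapted to allow a non-trivial nebentypus. Fix a prime $\ell \neq p$ and consider $\rho_f=\rho_{f,\ell}$ from \eqref{Galois}. By local--global compatibility (Carayol, Saito), the restriction $\rho_f|_{W(\Q_p)}$, after Frobenius-semisimplification, corresponds to $\pi_{f,p}=\pi(\chi_1,\chi_2)$ under \eqref{bijective}. Its restriction to inertia $I_p$ is therefore $\chi_1|_{\Z_p^\times}\oplus\chi_2|_{\Z_p^\times}$, viewed through the Artin map. Lemma \ref{p-part} (equivalently \eqref{Galois}) gives the determinant:
\[
\chi_1\chi_2|_{\Z_p^\times}=\Psi_p^{-1}|_{\Z_p^\times}.
\]
Hence for $u\in\Z_p^\times$ we have $\chi_2(u)=\chi_1(u)^{-1}\Psi_p^{-1}(u)$. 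The value $\Psi_p^{-1}(u)$ is an $m$-th root of unity.

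The key step is to show that the trace of $\rho_f$ on inertia lies in $E_{f,\ell}$, and in fact in $E_f$ after varying $\ell$. Choose $g\in I_p$ mapping to a generator $u_0$ of the cyclic group $\chi_1(\Z_p^\times)$; we may take $u_0$ with $\chi_1(u_0)=\xi_d$, a primitive $d$-th root of unity. Then
\[
\operatorname{tr}\rho_f(g)=\xi_d+\xi_d^{-1}\Psi_p^{-1}(u_0)=\xi_d+\zeta\xi_d^{-1},
\]
with $\zeta=\Psi_p^{-1}(u_0)\in\mu_m$. This trace lies in $\mathcal O_\ell\subset E_{f,\ell}$ for every $\ell$. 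To descend it to $E_f$ itself, we use the standard argument: $\rho_f$ can be realized over $E_{f,\lambda}$ for every prime $\lambda$ of $E_f$. The trace is an algebraic number $\alpha=\xi_d+\zeta\xi_d^{-1}$, independent of $\ell$ and $\lambda$ up to the embedding of $\overline{\Q}$ fixed via $\pi_{f,p}$. Hence $E_f(\alpha)/E_f$ is a finite extension in which every prime $\lambda$ (outside finitely many) splits completely. By Chebotarev this forces $\alpha\in E_f$.

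A cleaner alternative, which we would try first, avoids Chebotarev entirely. Write $\pi_{f,p}^\sigma\cong\pi_{f^\sigma,p}$ for $\sigma\in\Gal(\overline{\Q}/E_f)$; since $f^\sigma=f$, we get $\pi(\chi_1,\chi_2)\cong\pi(\sigma\chi_1,\sigma\chi_2)$. The inertial types therefore agree, so $\sigma$ permutes the multiset $\{\chi_1|_{\Z_p^\times},\chi_2|_{\Z_p^\times}\}$. Consequently $\sigma$ fixes $\chi_1(u_0)+\chi_2(u_0)=\alpha$, and $\alpha\in E_f$.

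The main obstacle is justifying the equivariance $\sigma(\pi_{f,p})\cong\pi_{f^\sigma,p}$, where $\sigma$ acts on $\mathcal A_p$ via the $\sigma$-linear twist of the Hecke action. This holds because the local component is determined by the $\GL_2(\Z_p)$- and $K_0(p^n)$-Hecke modules realized in the $\Q$-rational structure of $S_k(\Gamma_1(N))$, as used in the Proposition of Section \ref{localtypegaloisorbit}. Equivalently, it is the $\sigma$-equivariance of the local Langlands correspondence for $\GL_2$, which for principal series amounts to $\sigma\circ\pi(\chi_1,\chi_2)\cong\pi(\sigma\chi_1,\sigma\chi_2)$. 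Here we must be careful with the unitary normalization $|a/d|^{1/2}$. This factor is unramified, so it affects only the Frobenius part and not the restriction to $\Z_p^\times$, and the conclusion about inertial types is unaffected. Finally, $\Q(\alpha)=\Q(\xi_d+\zeta\xi_d^{-1})$, which gives the statement.
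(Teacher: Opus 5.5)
Your proposal is correct and contains two proofs. The first is the paper's argument: the trace of $\rho_{f,\lambda}$ on an inertia element is $\alpha=\xi_d+\zeta\xi_d^{-1}$, and one then descends from $E_{f,\lambda}$ to $E_f$ by Chebotarev. Your version of the descent is the more robust one. The paper picks a single prime $\ell$ inert in $F_\chi=\Q(\alpha)$, which has two problems:
\begin{enumerate}
\item Such primes need not exist when $\Gal(F_\chi/\Q)$ is not cyclic, e.g.\ $\Q(\xi_{24}+\xi_{24}^{-1})=\Q(\sqrt2,\sqrt3)$.
\item One inert $\ell$ does not by itself give a contradiction, because $E_{f,\lambda}$ can contain $(F_\chi)_\ell$ even when $F_\chi\not\subset E_f$. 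For instance, take $F_\chi=\Q(i)$, $E_f=\Q(\sqrt3)$, $\ell=7$: here $7$ is inert in both and the two completions coincide.
\end{enumerate}
Your statement that all but finitely many $\lambda$ split completely in $E_f(\alpha)$ avoids both. Make explicit that $E_f(\alpha)/E_f$ is Galois, because $\Q(\alpha)\subseteq\Q(\mu_{\mathrm{lcm}(d,m)})$ is abelian over $\Q$. Then a single embedding $\iota$ inducing $\lambda$ with $\iota(\alpha)\in E_{f,\lambda}$ already forces $\lambda$ to split completely.

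Your ``cleaner alternative'' is a genuinely different route. It uses no density argument and instead shows directly that $\Gal(\overline{\Q}/E_f)$ fixes $\alpha$, indeed stabilizes the whole inertial type. It applies verbatim to the supercuspidal case of Lemma \ref{sup}. Its key input, $\widetilde{\pi}_{f^\sigma,p}=\sigma(\widetilde{\pi}_{f,p})$, already gives Theorem \ref{orbit-preserves-type} by itself.

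The cost is exactly that input. The paper also uses it, in proving Galois equivariance of the pseudo-eigenvalues, but your justification is loose. $\sigma$-equivariance of the local Langlands correspondence is a purely local statement; it is not equivalent to the global identity $\pi_{f^\sigma,p}\cong\sigma(\pi_{f,p})$ up to unramified twist.

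A short complete justification stays on the Galois side. Take $\sigma\in\Gal(\overline{\Q}/E_f)$, extended to $\C$, and any $\iota\colon\C\cong\overline{\Q}_\ell$ with $\ell\neq p$. The representations $\rho_{f,\iota}$ and $\rho_{f,\iota\circ\sigma}$ have the same Frobenius traces $\iota(a_q)=\iota(\sigma(a_q))$, so they are isomorphic. Local--global compatibility on $I_p$ for both then gives $\iota(\alpha)=\iota(\sigma(\alpha))$, i.e.\ $\sigma(\alpha)=\alpha$.
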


\begin{proof}
For any prime $\ell \neq p$, the restriction of $\rho_{f,\ell}$ to inertia satisfies $\rho_{f,\ell}|_{I_p} \cong \chi_1 \oplus \chi_2$. Here, we identify the inertia group $I_p$ with the local unit group $\mathbb{Z}_p^\times$ via the local Artin reciprocity map, denoted $\mathrm{Art}_{\mathbb{Q}_p} : \mathbb{Q}_p^\times \to W_{\mathbb{Q}_p}^{\mathrm{ab}}$. The central character condition gives $\chi_1(x)\chi_2(x)=\Psi_p^{-1}(x)$ for $x \in \mathbb{Z}_p^\times$. Since $\Psi_p$ takes values in $\mu_m$, $\Psi_p^{-1}(x) = \zeta$ for some $m$-th root of unity $\zeta \in \mu_m$, hence $\chi_2(x)= \zeta \chi_1(x)^{-1}$. Therefore, evaluating the trace on an inertia element corresponding to $x \in \mathbb{Z}_p^\times$, we have:$$\tr(\rho_{f,\ell}(\mathrm{Art}_{\mathbb{Q}_p}(x)))=\chi_1(x) + \zeta \chi_1(x)^{-1}.$$ As $\chi_1|_{\mathbb{Z}_p^\times}$ has order $d$, for a suitable choice of $x$, these traces generate the field $F_\chi=\mathbb{Q}(\xi_d + \zeta \xi_d^{-1})$.

From here, we adapt the argument of \cite[Lemma 3.1]{dpt} to show that $F_\chi \subset E_f$.  Since $F_\chi/\mathbb{Q}$ is an abelian extension, Chebotarev's density theorem guarantees the existence of infinitely many primes inert in $F_\chi$. Choose such a prime $\ell \neq p$ for $\rho_{f,\ell}$. Suppose $F_\chi \not\subset E_f$. Then $F_\chi$ is a nontrivial extension of $F_\chi \cap E_f$. As $\ell$ is inert in $F_\chi$, the induced extension of local fields is nontrivial, yielding$[(F_\chi)_\ell : E_{f,\ell} \cap (F_\chi)_\ell] > 1$. However, the trace $\chi_1(x) + \zeta \chi_1(x)^{-1} = \mathrm{tr}(\rho_{f,\ell}(\text{Art}_{\mathbb{Q}_p}(x)))$ belongs to $E_{f,\ell}$ for all $x \in \mathbb{Z}_p^\times$. Consequently, the generators of $F_\chi$ lie in $E_{f,\ell}$, forcing $(F_\chi)_\ell \subseteq E_{f,\ell}$, which is a contradiction to the above inequality of extension degree. Hence $F_\chi \subset E_f$.
\end{proof}

\begin{lemma} \label{sup}
Let $f \in S_k(N,\Psi)$ be a newform with nebentypus $\Psi$ of exact order $m$. For a prime $p \mid N$, suppose $\pi_{f,p}$ is a non-sporadic supercuspidal representation, say $\pi_{f,p} \cong \Ind_{W(K)}^{W(\Q_p)} \theta$ for a quadratic extension $K/\Q_p$. Let $d$ be the order of $\theta|_{\mco_K^{\times}}$, where $\mco_K$ is the ring of integers of $K$. Then $E_f$ contains the field $\mathbb{Q}(\xi_d + \zeta \xi_d^{-1})$ for some $m$-th root of unity $\zeta \in \mu_m$.
\end{lemma}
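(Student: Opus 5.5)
The plan is to mirror the proof of Lemma \ref{prin}, replacing the principal series decomposition on inertia by the structure of the induced representation restricted to $I_p$. Let $\ell \neq p$ and consider $\rho_{f,\ell}|_{W(\Q_p)} \cong \Ind_{W(K)}^{W(\Q_p)} \theta$ (up to the usual twist, which is unramified and so invisible on inertia). Identify $\mco_K^\times$ with the inertia subgroup of $W(K)^{ab}$ through $\mathrm{Art}_K$. By \eqref{eq:phi_matrix}, for $x \in \mco_K^\times$ the element $\mathrm{Art}_K(x)$ lies in $W(K)$ and acts by $\mathrm{diag}(\theta(x), \theta^\iota(x))$. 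Here $\theta^\iota(x) = \theta(\bar x)$, where $\bar x$ is the Galois conjugate of $x$ in $K$. Hence
\[
\tr\bigl(\rho_{f,\ell}(\mathrm{Art}_K(x))\bigr) = \theta(x) + \theta(\bar x).
\]

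The key step is to rewrite $\theta(\bar x)$ as $\zeta\,\theta(x)^{-1}$ with $\zeta \in \mu_m$. We have $\theta(x)\theta(\bar x) = \theta(N_{K/\Q_p}(x))$ and $N_{K/\Q_p}(x) \in \Z_p^\times$. The central character condition \eqref{central-ch-condn} gives $\theta(N(x)) = \Psi_p^{-1}(N(x))\,\omega_{K/\Q_p}(N(x))$. The factor $\omega_{K/\Q_p}$ is trivial on norms, so $\theta(\bar x) = \Psi_p^{-1}(N(x))\,\theta(x)^{-1}$, and $\zeta := \Psi_p^{-1}(N(x)) \in \mu_m$. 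Now choose $x$ with $\theta(x)$ a primitive $d$-th root of unity $\xi_d$; this is possible because $\theta(\mco_K^\times)$ is the cyclic group $\mu_d$. The trace at this element is then $\xi_d + \zeta \xi_d^{-1}$, which lies in $E_{f,\ell}$ because $\rho_{f,\ell}$ has traces in $E_{f,\ell}$. (Strictly, the trace of the $\ell$-adic representation on inertia agrees with that of the Weil--Deligne parameter, since the monodromy is trivial in the supercuspidal case.)

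To descend from $E_{f,\ell}$ to $E_f$, set $F = \Q(\xi_d + \zeta\xi_d^{-1})$, which is abelian over $\Q$. By Chebotarev, choose $\ell \neq p$ inert in $F$. If $F \not\subset E_f$, then $F$ is a proper extension of $F \cap E_f$, and since $\ell$ is inert in $F$ the completion $F_\ell$ is a proper extension of $F_\ell \cap E_{f,\ell}$. But the generator of $F$ lies in $E_{f,\ell}$, which gives a contradiction exactly as in Lemma \ref{prin}.

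The main point needing care is that the element $\zeta$ (and hence the field $F$) might a priori depend on $\ell$ through the choice of $x$. To avoid this, I would fix $x \in \mco_K^\times$ once and for all, independently of $\ell$, so that $\zeta$ and $F$ are determined before choosing $\ell$; this is legitimate because $\theta$ is attached to $\pi_{f,p}$ and does not depend on $\ell$. The hypothesis ``non-sporadic'' is used only to guarantee the dihedral description $\Ind\theta$. For $p$ odd this is automatic, and for $p=2$ it is precisely the hypothesis that excludes the $A_4$ and $S_4$ cases.
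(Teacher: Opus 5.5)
The first half of your argument matches the paper's proof step for step and is fine: restricting to $W(K)$, using $\theta^\iota(x)=\theta(\bar x)$, the norm computation with $\omega_{K/\Q_p}$ trivial on norms, and choosing $x$ with $\theta(x)=\xi_d$. The problem is the descent from $E_{f,\lambda}$ to $E_f$. You import it from Lemma \ref{prin}, exactly as the paper does, and it fails as stated for two independent reasons.

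\textbf{No inert primes in general.} An abelian $F/\Q$ has inert primes only if $\Gal(F/\Q)$ is cyclic, because the decomposition group at an unramified prime is generated by Frobenius. This genuinely fails within the scope of the lemma. Take $p=11$ with $\Psi_{11}$ tamely ramified, and $K$ the unramified quadratic extension of $\Q_{11}$. A tame $\theta$ with $\theta(\xi_{120})$ of order $24$ is allowed, since $\theta(\xi_{120})^{12}=\Psi_{11}^{-1}(\xi_{10})=-1$. It does not factor through the norm because $11\equiv 3\pmod 4$, so it gives an unramified supercuspidal type of level $11^2$. Every $x$ with $\theta(x)$ a primitive $24$th root of unity satisfies $\zeta=\Psi_{11}^{-1}(N_{K/\Q_{11}}(x))=-1$. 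Hence $F=\Q(\xi_{24}-\xi_{24}^{-1})=\Q(\sqrt{-2},\sqrt{3})$, which has no inert primes at all.

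\textbf{The local inference is invalid.} Even when $\ell$ is inert in $F$, you cannot pass from $F\supsetneq F\cap E_f$ to $F_\ell\supsetneq F_\ell\cap E_{f,\lambda}$. Here $F_\ell$ is the unramified extension of $\Q_\ell$ of degree $[F:\Q]$. It therefore lies in $E_{f,\lambda}$ whenever $[F:\Q]$ divides the residue degree of $\lambda$, whether or not $F\subset E_f$. For example, take $F=\Q(i)$, $E_f=\Q(\sqrt{-7})$ and $\ell=3$: both completions equal the unramified quadratic extension of $\Q_3$, yet $F\not\subset E_f$.

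\textbf{Repair.} Apply Chebotarev to the Galois extension $E_fF/E_f$ rather than to $F/\Q$. If $F\not\subset E_f$, then infinitely many primes $\lambda\nmid p$ of $E_f$ fail to split completely in $E_fF$. For such $\lambda$, and any embedding $\iota$ of $E_fF$ into $\overline{\Q}_\ell$ extending $E_f\hookrightarrow E_{f,\lambda}$, we get $\iota(F)\not\subset E_{f,\lambda}$. But the inertial trace $\iota(\xi_d+\zeta\xi_d^{-1})$ lies in $E_{f,\lambda}$ and generates $\iota(F)$, which is a contradiction.

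Alternatively, avoid $\ell$-adic representations entirely. For $\sigma\in\mathrm{Aut}(\C/E_f)$ we have $f^\sigma=f$. The compatibility $\pi_{f^\sigma,p}\cong\sigma(\pi_{f,p})$ (up to an unramified twist), which the paper itself invokes when proving Galois equivariance of pseudo-eigenvalues, then shows that the function $x\mapsto\theta(x)+\theta(\bar x)$ on $\mco_K^\times$ is $\sigma$-invariant. Hence $\xi_d+\zeta\xi_d^{-1}\in E_f$.
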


\begin{proof}
As $\mathbb{C}^\times$ is abelian, the character $\theta$ factors uniquely through the abelianization $W(K)^{ab} = W(K)/[W(K), W(K)]$. The Artin reciprocity gives an isomorphism $\text{Art}_K : K^\times \xrightarrow{\sim} W(K)^{ab}$. We identify the inertia subgroup of $K$ with $\mathcal{O}_K^\times$ under this map. Thus, for any $x \in \mathcal{O}_K^\times$, we evaluate $\theta(x)$ as $\theta(\text{Art}_K(x))$. The restriction of the local $\ell$-adic Galois representation to $W(K)$ decomposes as $\rho_{f, \ell}|_{W(K)} \cong \theta \oplus \theta^\iota$, where $\iota$ is a fixed element in $W(\mathbb{Q}_p) \setminus W(K)$ lifting the non-trivial involution of $\text{Gal}(K/\mathbb{Q}_p)$. The conjugate character is defined by $\theta^\iota(g)=\theta(\iota^{-1} g \iota)$ for $g \in W(K)$.  

By the Galois equivariance of the local Artin map (see condition (W2) in Sec. 1 of \cite{TateCorvallis}), conjugation by $\iota$ on the Weil group intertwines with the Galois action on the field $K^\times$. Specifically:
$$\iota^{-1} \text{Art}_K(x) \iota \equiv \text{Art}_K(\overline{x}) \pmod{[W(K), W(K)]},$$
where $\overline{x}$ is the Galois conjugate of $x$ under the non-trivial involution in $\text{Gal}(K/\mathbb{Q}_p)$. 

Consequently, $\iota^{-1} \text{Art}_K(x) \iota$ and $\text{Art}_K(\overline{x})$ represent the same class in $W(K)^{ab}$. Since $\theta$ factors through this abelianization, we have
$$\theta^\iota(x) = \theta(\iota^{-1} \text{Art}_K(x) \iota) = \theta(\text{Art}_K(\overline{x})) = \theta(\overline{x}).$$

Evaluating the trace of the Galois representation on an inertia element $x \in \mathcal{O}_K^\times$, we obtain
$$\mathrm{tr}(\rho_{f,\ell}(\text{Art}_K(x))) = \theta(x) + \theta^\iota(x) = \theta(x) + \theta(\overline{x}).$$

To evaluate $\theta(\overline{x})$, observe that the norm $x\overline{x} = N_{K/\mathbb{Q}_p}(x)$ lies in $\mathbb{Z}_p^\times$. The central character condition requires $\theta|_{\mathbb{Z}_p^\times} = \Psi_p^{-1}\omega_{K/\mathbb{Q}_p}$. Because $\omega_{K/\mathbb{Q}_p}$ is trivial on norms and $\Psi_p$ has order dividing $m$, the right-hand side evaluates to some $m$-th root of unity $\zeta \in \mu_m$. This forces
$$\theta(x)\theta(\overline{x}) = \theta(x\overline{x}) = \zeta,$$
which implies $\theta(\overline{x}) = \zeta \theta(x)^{-1}$.

Substituting this back into the trace yields:
$$\mathrm{tr}(\rho_{f,\ell}(\text{Art}_K(x))) = \theta(x) + \zeta \theta(x)^{-1}.$$

As $\theta|_{\mathcal{O}_K^\times}$ has order $d$, the value $\theta(x)$ is a primitive $d$-th root of unity $\xi_d$ for a suitable choice of $x$. Therefore, these traces generate the field $F_\theta = \mathbb{Q}(\xi_d + \zeta \xi_d^{-1})$. From this point, the argument used in Lemma \ref{prin} applies verbatim, allowing us to conclude that $F_\theta \subset E_f$. 
\end{proof}

\begin{thm}\label{orbit-preserves-type}
Let $f \in S_k(N, \Psi)$ be a newform with a quadratic nebentypus $\Psi$. For a prime $p \mid N$, the set $\{\widetilde{\pi}_{\sigma(f),p} : \sigma \in \Gal(\overline{\mathbb{Q}}/\mathbb{Q})\}$ of local types at $p$ of the Galois conjugates of $f$ equals the local type Galois orbit of $\widetilde{\pi}_{f,p}$.
\end{thm}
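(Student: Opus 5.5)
We prove the two inclusions separately, following the pattern of \cite[Theorem 3.2]{dpt}.

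\emph{The set of conjugate types lies in the orbit.} Fix $\sigma \in \Gal(\overline{\Q}/\Q)$ and extend it to $\Gal(\C/\Q)$. Choose an auxiliary prime $\ell \neq p$ and an isomorphism $\overline{\Q}_\ell \cong \C$. The Weil–Deligne representation $\tau(\pi_{f,p})$ is obtained from $\rho_{f,\ell}|_{G_{\Q_p}}$ by local–global compatibility (Carayol). Because Fourier coefficients and Hecke eigenvalues transform by $\sigma$, strong multiplicity one and compatibility give $\tau(\pi_{f^\sigma,p}) \cong \sigma(\tau(\pi_{f,p}))$, up to the usual half-integral twist by $|\cdot|^{1/2}$, which is unramified and so does not affect inertial types. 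Restricting to inertia yields $\widetilde{\pi}_{f^\sigma,p} = \sigma(\widetilde{\pi}_{f,p})$, so every conjugate type lies in the local type Galois orbit. By the proposition above, $f^\sigma$ again lies in $S_k(N,\Psi)$, so the nebentypus constraint is automatically respected; this is consistent with Lemma \ref{p-part}.

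\emph{Every type in the orbit is realized by a conjugate of $f$.} This is the substantive direction. Let $\tau' = \sigma(\widetilde{\pi}_{f,p})$ with $\sigma \in \Gal(\C/\Q)$. We must find $\sigma' \in \Gal(\overline{\Q}/\Q)$ such that $f^{\sigma'}$ has type $\tau'$. The action of $\sigma$ on the type depends only on $\sigma$ restricted to the field generated by the traces of the type on inertia. We split into cases by the kind of representation.

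\textbf{Principal series.} The type is $\chi_1 \oplus \chi_1^{-1}\Psi_p^{-1}$ on $\Z_p^\times$, and its character is determined by $x \mapsto \chi_1(x) + \zeta_x \chi_1(x)^{-1}$. Since $\Psi$ is quadratic, $\zeta_x = \pm 1$ lies in $\Q$. These values generate $F_\chi$, and $F_\chi \subset E_f$ by Lemma \ref{prin}. Take $\sigma' \in \Gal(\overline{\Q}/\Q)$ agreeing with $\sigma$ on $F_\chi$, which is possible because $F_\chi/\Q$ is abelian. Then $f^{\sigma'}$ has inertial character $\sigma'(\mathrm{tr}) = \sigma(\mathrm{tr})$. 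A semisimple representation of the finite quotient of inertia is determined by its character, so the type of $f^{\sigma'}$ equals $\tau'$.

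\textbf{Supercuspidal.} Lemma \ref{sup} plays the role of Lemma \ref{prin}. Here the type restricted to $\mathcal{O}_K^\times$ has trace $\theta + \zeta\theta^{-1}$, again with $\zeta \in \{\pm 1\}$. We also need to check that the inertial character on all of $I_p$, not only on $I_K$, is determined by these values. For $K$ unramified, $I_p = I_K$. For $K$ ramified, the trace off $I_K$ vanishes for an induced representation, so the type is again determined by traces lying in $F_\theta \subset E_f$.

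\textbf{Steinberg.} The type is $\mu|_{\Z_p^\times} \oplus \mu|_{\Z_p^\times}$ with monodromy. The relevant field is generated by the values of $2\mu$ on inertia, and $\mu^2 = \Psi_p^{-1}$ there, so these values lie in $F_\chi$. The same argument as for principal series applies verbatim (or we cite \cite{dpt}).

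The main obstacle is the step that justifies that a local inertial type is determined by its trace field, and that this field lies in $E_f$. In the non-sporadic supercuspidal case one must make sure that $\sigma$ acting on $\theta$ corresponds exactly to $\sigma$ acting on the trace values $\theta(x) + \zeta\theta(x)^{-1}$. Since $\theta$ and $\zeta\theta^{-1}$ give the same type (they are $\iota$-conjugate), it suffices to check that $\sigma(\theta)$ and $\sigma'(\theta)$ give equal traces, which holds because $\sigma$ and $\sigma'$ agree on $F_\theta$. At $p=2$ the sporadic types would need separate treatment, but the paper restricts to odd $p$.
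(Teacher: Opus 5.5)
Your proposal is correct and takes essentially the paper's route. The paper defers to \cite[Theorem 3.3]{dpt} and uses Lemmas \ref{prin} and \ref{sup} (the trace field of the type lies in $E_f$) to realize every type in the orbit by a conjugate of $f$, which is exactly your second step; your compatibility argument for the first inclusion and your separate Steinberg case just spell out details the paper leaves implicit. As a side remark, once you have $\widetilde{\pi}_{f^\sigma,p}=\sigma(\widetilde{\pi}_{f,p})$ for all $\sigma$, taking $\sigma'=\sigma|_{\overline{\Q}}$ already gives the reverse inclusion, so the trace-field lemmas are not strictly needed for that direction.
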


\begin{proof}
   The proof of this theorem is analogous to that of \cite[Theorem 3.3]{dpt}. Lemmas \ref{prin} and \ref{sup} ensure that each local type in the Galois orbit of $\widetilde{\pi}_{f,p}$  is realized by a global conjugate of $f$.
\end{proof}


\subsection{Pseudo-eigenvalues}

When studying Galois orbits of modular forms, there is another natural invariant to consider, namely the Atkin-Li pseudo-eigenvalues at each prime \(p\) dividing the level. Let \(f \in S_k(N,\Psi)\) be a newform. Following \cite{al}, for each \(p \mid N\) with $\val_p(N)=N_p \geq 1$, there is an Atkin-Li operator \(W_{p}\) acting on \(S_k(N,\Psi)\)  which commutes with the Hecke operators up to some nebentypus twist.

If \(\lambda_p(f)\) denotes the pseudo-eigenvalue of \(W_p\), then
\[
W_p f = \lambda_p(f)\,g,
\]
for some newform $g$. By \cite[Theorem~1.1]{al}, the pseudo-eigenvalue \(\lambda_p(f)\) is an algebraic number of absolute value \(1\).

Consider the adelized holomorphic modular form of $\GL_2(\mathbb{A}_\Q)$ attached to $f$ which will again be denoted by $f$. Let $\mathscr{F}f$ denote the Whittaker function for a modular form $f$ and it is defined on $\GL_2(\mathbb{A}_\Q)$ by
\[\mathscr{F}f (G) = \int_{\Q \backslash \mathbb{A}} f\left( \begin{bmatrix} 1 & u \\ 0 & 1  \end{bmatrix}G \right) \psi(-u) \, du,\]
where $\psi$ is the unitary character on $\Q \backslash \mathbb{A}$ such that $\psi_\infty(x)=e^{2\pi i x}$. The relation between the Whittaker function of $W_pf$ evaluated at identity and the local epsilon factor at $p$ is given by the theorem below \cite[Theorem]{flath}.
\begin{thm} \label{pseudo}
	$\mathscr{F} W_p f (I) ={p^{-N_p/2}} \Psi_p(-\frac{1}{p^{N_p}}) \varepsilon_p(\pi_p, \psi_p)$,
	where $\pi_p$ is the local representation attached to $f$ at the prime $p$ and $I$ denote the $2 \times 2$ identity matrix.
\end{thm}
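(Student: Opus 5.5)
We prove the identity by computing the Whittaker function of $W_pf$ adelically, factoring it over the places, and identifying the $p$-adic factor via the local functional equation. First we write $W_p$ as right translation by a suitable adelic element. Concretely, $W_p f$ is realised as the adelic form $g\mapsto f(g\,\eta_p)$, where $\eta_p\in\GL_2(\Q_p)$ is (up to the normalising scalar implicit in the Atkin-Li definition) the matrix $\begin{bmatrix}0&1\\-p^{N_p}&0\end{bmatrix}$. It sits in the $p$-th component, with the identity at all other places; the Atkin-Li choice of the component $\begin{bmatrix} p^{N_p}x & y\\ Nz & p^{N_p}w\end{bmatrix}$ differs from this by an element of $K_0(N)$ away from $p$. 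This element acts on $f$ through the nebentypus, and tracking that action is what will produce the factor $\Psi_p(-1/p^{N_p})$ in the end (via the adelic formula \eqref{adelic}).

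Next we factor the Whittaker function. Since $\pi_f\cong\bigotimes'_v\pi_{f,v}$ and Whittaker models are unique, $\mathscr F f$ factors as $\prod_v W_v$. We choose the local newvectors $W_v$ normalised so that $\mathscr F f(I)=\prod_v W_v(1)$ equals $e^{-2\pi}$ times the first Fourier coefficient $a_1=1$. Right translation by $\eta_p$ changes only the $p$-factor. We therefore get
\[
\mathscr F W_pf(I)=\frac{W_p(\eta_p)}{W_p(1)}\,\mathscr F f(I),
\]
and the problem becomes a purely local computation of $W_p(\eta_p)$, where $W_p$ is the newform in the $\psi_p$-Whittaker model of $\pi_p$.

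The local step is the heart of the proof and the main obstacle. We use the local functional equation of Jacquet-Langlands,
\[
Z(1-s,\pi(w)W,\omega^{-1})=\gamma(s,\pi,\psi_p)\,Z(s,W),
\]
with $\omega=\omega_{\pi_p}$ the central character. We apply it to the newform $W_p$, for which $Z(s,W_p)=L(s,\pi_p)$. We write $\eta_p=w\cdot\mathrm{diag}(-p^{N_p},1)$ and use the $K_1(p^{N_p})$-invariance and the central character to express $\pi(\eta_p)W_p$ as a multiple of the "conjugate newform" of $\widetilde\pi_p=\omega^{-1}\pi_p$. That conjugate newform has zeta integral $L(1-s,\widetilde\pi_p)$. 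Comparing the two sides, the $L$-factors cancel and the $s$-dependence collapses to $p^{N_p(s-1/2)}$ through the relation $\varepsilon(s,\pi,\psi)=\varepsilon(1/2,\pi,\psi)p^{-N_p(s-1/2)}$ for unramified $\psi_p$. This leaves $W_p(\eta_p)=p^{-N_p/2}\,\omega(-p^{N_p})^{-1}\varepsilon_p(\pi_p,\psi_p)W_p(1)$, up to the normalisation of $\eta_p$. The delicate points are two. The first is keeping track of the sign $-1$ and the power $p^{N_p}$ through the central character; by \eqref{adelic} the local central character is $\Psi_p^{\pm1}$ on units and $\Psi'(p)$ on $p$, and it must combine with the global correction from the first step into exactly $\Psi_p(-1/p^{N_p})$. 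The second is the factor $p^{-N_p/2}$, which comes from the weight normalisation $|\det|^{k/2}$ in the adelisation. I would check all of this first in the unramified-character principal-series case, where $W_p$ is explicit, to pin down the conventions.

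Finally we combine the pieces. Dividing by the normalisation $\mathscr F f(I)$, which we take to be $1$ after normalising the archimedean factor at $I$ as in \cite{flath}, yields $\mathscr{F} W_p f (I) =p^{-N_p/2}\Psi_p(-1/p^{N_p})\varepsilon_p(\pi_p,\psi_p)$, as claimed.
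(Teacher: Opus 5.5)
The paper gives no argument for this statement; it quotes the identity from Flath \cite{flath}. Your architecture is the standard route to that result: turn $W_p$ into right translation at $p$, factor the Whittaker function, and evaluate the local newvector at the Atkin--Lehner element with the Jacquet--Langlands functional equation. The gap is that the entire content of the statement is the scalar $p^{-N_p/2}\Psi_p(-1/p^{N_p})$, and you never derive it. You write down the target shape, attribute each factor to two different sources, and then assert that they ``must combine''.

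\emph{The local step.} Run it with $\eta_p=\begin{pmatrix}0&1\\-p^{N_p}&0\end{pmatrix}=w\,\mathrm{diag}(p^{N_p},1)$. Your $w\,\mathrm{diag}(-p^{N_p},1)$ has lower-left entry $+p^{N_p}$; this is harmless, since the two differ by $\mathrm{diag}(-1,1)\in K_1(p^{N_p})$. One has
\[
\mathrm{diag}(a,1)\,w=p^{-N_p}\,\mathrm{diag}(ap^{N_p},1)\,\eta_p,\qquad \pi_p(\eta_p)W_p=W_p(\eta_p)\,(\omega\circ\det)\,W_{\widetilde\pi},
\]
where $W_{\widetilde\pi}$ is the normalized newvector of $\widetilde\pi\cong\pi_p\otimes\omega^{-1}$. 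The factors $\omega(p)^{-N_p}$ and $\omega(p^{N_p})$ cancel, as do $\omega(a)$ and $\omega^{-1}(a)$. The functional equation then gives $W_p(\eta_p)\,p^{N_p(1/2-s)}=\varepsilon(s,\pi_p,\psi_p)W_p(1)$, that is, $W_p(\eta_p)=\varepsilon(\tfrac12,\pi_p,\psi_p)W_p(1)$. No central-character factor appears locally.

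\emph{Where the constants actually come from.} The nebentypus factor does not come from places away from $p$. Take $K_0(N)$ acting through the lower-right entry. Then $x\equiv1\pmod{N/Q}$ and $\det W_Q=Q$ force $Qw\equiv1\pmod{N/Q}$, so $W_Q\in K_1(\ell^{N_\ell})$ for every $\ell\mid N/Q$ and acts trivially there. Instead, $f|_kW_Q$ corresponds to right translation by $(W_Q)_f^{-1}$, not by $(W_Q)_f$. At $p$ one has $W_Q\in K_1(p^{N_p})\,\eta_p$ and $\eta_p^{-1}=-p^{-N_p}\eta_p$. Together these give exactly $\omega_p(-p^{-N_p})=\Psi_p(-1/p^{N_p})$ in the convention \eqref{adelic}. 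Likewise, $|\det|^{k/2}$ produces no power of $p$; it is precisely the normalization matching Atkin--Li's slash operator. Since $|\lambda_p(f)|=1$, the factor $p^{-N_p/2}$ can only reflect the normalization of $\varepsilon_p$. It must convert an $\varepsilon$-factor of absolute value $p^{N_p/2}$, such as Deligne's $\varepsilon(0,\pi_p,\psi_p)$, into $\varepsilon(\tfrac12,\pi_p,\psi_p)$. You must fix that convention rather than derive the factor twice. As a check, for $\mathrm{St}\otimes\mu$ with $\mu$ unramified the correct bookkeeping gives $\lambda_p(f)=\mu(p)^{-2}\cdot(-\mu(p))=-1/\mu(p)$, as in Theorem~\ref{CountofLocalType}.

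\emph{The final normalization.} Your last step is not legitimate. You correctly record that $\mathscr F f(I)=e^{-2\pi}$ for the paper's $\mathscr F$ (see \eqref{eq2}), so you cannot afterwards ``take it to be $1$''. What your method proves is the ratio identity
\[
\mathscr F W_pf(I)=\Psi_p(-1/p^{N_p})\,\varepsilon(\tfrac12,\pi_p,\psi_p)\,\mathscr F f(I).
\]
This is the real content of Flath's theorem. The displayed statement holds as written only in a normalization where $\mathscr F f(I)=1$. Combined with Proposition~\ref{pseudo1}, it gives $\lambda_p(f)=\Psi_p(-1/p^{N_p})\,\varepsilon(\tfrac12,\pi_p,\psi_p)$ directly. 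The factor $e^{2\pi}$ in Remark~\ref{al-vs-epsilon} is an artifact of this normalization mismatch.
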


The next proposition determines the quantity $\mathscr{F} W_p f (I) $ in terms of the pseudo-eigenvalue of $f$ at $p$.
\begin{prop} \label{pseudo1}
	For a newform $f \in S_k(N, \Psi)$ and  for every $p \mid N$, we have
	\[ \mathscr{F} W_p f (I)  = e^{-2\pi}\lambda_p(f).\]
\end{prop}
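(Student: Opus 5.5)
The plan is to compute both sides by unwinding the definitions. By the defining property of the Atkin--Li operator, $W_p f = \lambda_p(f)\, g$ with $g$ a normalized newform ($a_1(g)=1$). Since the map $h \mapsto \mathscr{F}h(I)$ is linear, we get
\[
\mathscr{F}W_pf(I)=\lambda_p(f)\,\mathscr{F}g(I).
\]
It therefore suffices to show that $\mathscr{F}g(I)=e^{-2\pi}$ for any normalized cusp form $g$. In other words, the Whittaker function of the adelization evaluated at the identity should recover the first Fourier coefficient, multiplied by the archimedean factor $e^{-2\pi y}$ at $y=1$.

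For this computation, I would use the standard adelization. Write $\GL_2(\A_\Q)=\GL_2(\Q)\,\GL_2^+(\R)\,K_0(N)$ by strong approximation, and set $g(\gamma g_\infty k)=(g|_k g_\infty)(i)\,\Psi(k)$ (with the paper's normalization of $\Psi$ via \eqref{adelic}). Evaluating the integrand at $\begin{bmatrix}1&u\\0&1\end{bmatrix}$, take $u\in\A$ and decompose $\A=\Q+(\R\times\prod_p\Z_p)$. The integral over $\Q\backslash\A$ then becomes an integral over $u_\infty\in[0,1)$, together with an integral over $\widehat{\Z}$ of total mass $1$. The finite part $\begin{bmatrix}1&u_f\\0&1\end{bmatrix}$ lies in $K_0(N)$ with trivial nebentypus value, so it contributes nothing. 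The archimedean part yields $g(u_\infty+i)=\sum_n a_n(g)e^{2\pi i n u_\infty}e^{-2\pi n}$, and the factor $\psi(-u)$ contributes $e^{-2\pi i u_\infty}$, using that $\psi_p$ is trivial on $\Z_p$. Orthogonality then isolates the term $n=1$, giving $a_1(g)e^{-2\pi}=e^{-2\pi}$.

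The main obstacle is matching conventions with Theorem \ref{pseudo}, taken from \cite{flath}. One must check three things: that the adelic lift of $W_pf$ used there is exactly the adelization of the classical form $\lambda_p(f)g$, rather than a lift differing by a local translate at $p$ or a normalizing constant; that $g$ has the conjugated nebentypus (the adelization should be formed with respect to the nebentypus of $g$, namely $\overline{\Psi_p}\Psi'$, which for quadratic $\Psi$ equals $\Psi$); and that the weight-$k$ factor $(\det)^{k/2}$ or the $j$-factor in the adelization gives $1$ at the identity. I would fix the adelization convention to match Flath's, and verify that $W_p$ corresponds classically to right translation by the element $\begin{bmatrix}0&1\\-p^{N_p}&0\end{bmatrix}_p$. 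Any discrepancy would then appear only as a root of unity or a power of $p$, and these should be absorbed into the normalization of $W_p$ in \cite{al}. With these checks in place, the identity follows.
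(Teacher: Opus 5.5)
Your proposal is correct and follows the paper's proof: linearity reduces everything to $\mathscr{F}g(I)$, and your unfolding of $\Q\backslash\A$ over $[0,1)\times\widehat{\Z}$ is exactly the computation the paper imports from \cite[Corollary 12.4]{KL} (with $\psi(u)=\theta(-u)$), giving $e^{-2\pi}a_1(g)=e^{-2\pi}$. The convention-matching with \cite{flath} in your last paragraph is not needed for this proposition itself; it only enters when the proposition is combined with Theorem \ref{pseudo} in Remark \ref{al-vs-epsilon}, so you can drop the hedge about discrepancies being absorbed into the normalization.
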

\begin{proof}
	As $W_p f = \lambda_p(f) g$, we deduce that 
	\begin{equation} \label{eq1}
		\mathscr{F} W_p f (I) = \lambda_p(f) \mathscr{F} g (I) = \lambda_p(f) \int_{\Q \backslash \mathbb{A}} g\left( \begin{bmatrix} 1 & u \\ 0 & 1  \end{bmatrix} \right) \psi(-u) \, du.
	\end{equation}
	Let us consider the standard character $\theta$ of $\mathbb{A}$ which is defined by $\theta(x) = \prod_{p \leq \infty}^{} \theta_p(x_p)$, where for $x_p \in \Q_p$, we have $\theta_p(x_p) = e^{2\pi i r_p(x_p)}$ with $r_p(x_p)$ being the principal part of $x_p$, and $\theta_\infty(x)=e^{-2\pi i x}$ for $x \in \R$ (see \cite[Page $110$]{KL}).We choose $\psi$ such that $\psi(u)=\theta(-u)$. By Corollary 12.4 of \cite{KL},
\begin{equation} \label{eq2}
\int_{\Q \backslash \mathbb{A}} g\left( \begin{bmatrix} 1 & u \\ 0 & 1  \end{bmatrix} \right) \psi(-u)  du = e^{-2\pi} a_1(g).
\end{equation}
Since $g$ is a newform, $a_1(g) = 1$. Now, the result follows directly from Equations \ref{eq1} and \ref{eq2}.
\end{proof}

\begin{remark} \label{al-vs-epsilon}
    Using Theorem \ref{pseudo} and Proposition \ref{pseudo1}, we conclude that for a newform $f \in S_k(N, \Psi)$, we have $\lambda_p(f)= e^{2\pi}{p^{-N_p/2}} \Psi_p(-\frac{1}{p^{N_p}}) \varepsilon_p(\pi_p, \psi_p)$. 
\end{remark}

\subsection{Galois equivariance of Atkin-Li pseudo-eigenvalue} 
Our goal in this subsection is to determine how the Atkin-Li pseudo-eigenvalue $\lambda_p(f)$ changes under the action of $\Gal(\overline{\Q}/\Q)$. Establishing this precise equivariance allows us to define an equivalence relation (cf. relation \ref{equiv-reln-on-al-ev}) on the unit circle.  The set of Atkin-Li pseudo-eigenvalues up to this equivalence relation gives a well-defined invariant of the Galois orbit of modular form.

Let $F$ be a non-Archimedean local field with ring of integers $\mathcal{O}_F$, maximal ideal $\mathfrak{p}_F$, and normalized valuation $v_F$. Let $W_F$ denote the Weil group of $F$, and $I_F \subset W_F$ the inertia subgroup. Fix an automorphism $\sigma \in \text{Aut}(\mathbb{C})$. We freely use the notations of \cite{TateCorvallis} in this section.

\begin{prop}[Galois invariance of Artin conductor] \label{Gal-inv-cond}
Let $(\rho, V)$ be a finite-dimensional, continuous complex representation of $W_F$. Let the twisted representation $V^\sigma$ be defined by the homomorphism $\rho^\sigma(g) = \sigma(\rho(g))$. If $a(V)$ denotes the Artin conductor of $V$, then $a(V^\sigma) = a(V)$.
\end{prop}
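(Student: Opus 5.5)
The plan is to use the standard definition of the Artin conductor in terms of the ramification filtration. Let $G_i$ (upper numbering, or lower numbering on a finite quotient) denote the ramification subgroups of $W_F$, with $I_F = G_0$. Because $\rho$ is continuous for the discrete topology on $\GL(V)$, its restriction to $I_F$ factors through a finite quotient $\mathrm{Gal}(L/F^{\mathrm{ur}}\cdot)$; more concretely, $\rho|_{I_F}$ has finite image, so $\rho$ factors through $W_F/H$ with $H$ open in $I_F$. The conductor is then
\[
a(V)=\sum_{i\ge 0}\frac{|G_i|}{|G_0|}\,\operatorname{codim} V^{G_i},
\]
computed in a finite Galois extension $L/F$ through which $\rho|_{I_F}$ factors, with $G_i$ the lower-numbering ramification groups of $L/F$. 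This formula is independent of the choice of $L$, which is the standard fact recorded in Serre's Local Fields; see also Tate's Corvallis notes.

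First I would check that $\rho^\sigma$ is again continuous with $\rho^\sigma|_{I_F}$ trivial on the same open subgroup $H$. This is immediate since $\rho^\sigma(g)=\sigma(\rho(g))$ entrywise, and $\sigma(\mathrm{Id})=\mathrm{Id}$. Hence the same finite extension $L$ and the same ramification groups $G_i$ can be used for both $V$ and $V^\sigma$. The ramification groups themselves depend only on $F$ and $L$, not on the coefficients.

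The key step is then to show that $\dim V^{G_i} = \dim (V^\sigma)^{G_i}$ for each $i$. For a finite group $G_i$, we have
\[
\dim V^{G_i}=\frac{1}{|G_i|}\sum_{g\in G_i}\tr\rho(g).
\]
For the conjugate representation, $\tr\rho^\sigma(g)=\sigma(\tr\rho(g))$, so
\[
\dim (V^\sigma)^{G_i}=\sigma\!\left(\tfrac{1}{|G_i|}\sum_{g\in G_i}\tr\rho(g)\right)=\sigma(\dim V^{G_i}).
\]
Since $\sigma$ fixes $\Q$, this rational integer is unchanged. Alternatively, we can say directly that $V^{G_i}$ is the kernel of a matrix system, and applying $\sigma$ to a basis of solutions preserves the dimension. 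Substituting this into the conductor formula gives $a(V^\sigma)=a(V)$.

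I expect the only real subtlety to be justifying the reduction to a finite quotient. The Weil group is not profinite, and the conductor only sees $I_F$ and its wild filtration. So I would state carefully that $a(V)$ depends only on $\rho|_{I_F}$, which has finite image because $I_F$ is profinite and $\ker\rho$ is open. If $\rho$ comes with a nilpotent operator $N$, as for Weil–Deligne representations, the extra term $\dim V^{I_F}-\dim(\ker N)^{I_F}$ is handled by the same kernel-dimension argument, since $\sigma$ commutes with forming kernels of matrices.
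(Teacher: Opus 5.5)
Your proposal is correct and follows the same route as the paper: both use the ramification-group formula for $a(V)$ on a finite quotient of inertia and reduce to showing $\dim V^{G_i}=\dim (V^\sigma)^{G_i}$. For that step the paper uses the rank-of-minors argument, which is your stated alternative; your main argument via $\dim V^{G_i}=\frac{1}{|G_i|}\sum_{g\in G_i}\tr\rho(g)$ and $\sigma|_{\Q}=\mathrm{id}$ is an equally valid variant, and your remarks on the continuity of $\rho^\sigma$ and on the monodromy term for Weil--Deligne representations are correct additions that the paper leaves implicit.
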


\begin{proof}
Since $\rho(I_F)$ is finite, $a(V)$ can be computed using the ramification groups $G_i$ of a finite Galois extension:
$$ a(V) = \sum_{i \ge 0} \frac{|G_i|}{|G_0|} \dim(V/V^{G_i}). $$
Thus, it suffices to show $\dim(V^{G_i}) = \dim((V^\sigma)^{G_i})$ for all $i$. Let $d = \dim V$. The fixed space $V^{G_i}$ is the kernel of the linear map $T: V \to V^{\oplus |G_i|}$ defined by $v \mapsto \bigoplus_{g\in G_i}(\rho(g)-I_d)v$. Fixing a basis, let $B$ be the matrix representing $T$. By the Rank-Nullity theorem, $\dim(V^{G_i}) = d - \operatorname{rank}(B)$.

Similarly, $(V^\sigma)^{G_i}$ is the kernel of the twisted matrix $\sigma(B)$. Because the field automorphism $\sigma$ commutes with the determinant, a minor of $B$ vanishes if and only if the corresponding minor of $\sigma(B)$ vanishes. Hence, $\operatorname{rank}(B) = \operatorname{rank}(\sigma(B))$, which implies $\dim(V^{G_i}) = \dim((V^\sigma)^{G_i})$, and the result follows.
\end{proof}

Let $\pi_{f,p}$ be the local automorphic representation at $p$ attached to the newform $f$, and let $V$ be the two-dimensional Weil-Deligne representation of $W_{\Q_p}$ corresponding to $\pi_{f,p}$ under the local Langlands correspondence. Let $\varepsilon_L(V,\psi_p)$ and $\varepsilon_D(V,\psi_p,dx_{\psi_p})$ denote the Langlands and Deligne local constants as defined in \cite[Sec.~3.6]{TateCorvallis}. Then
\[
\varepsilon_L(V,\psi_p)=\varepsilon_D(V\omega_{1/2},\psi_p,dx_{\psi_p})
=\left(\tfrac{dx_{\psi_p}}{dx_1}\right)^2 \varepsilon_D(V\omega_{1/2},\psi_p,dx_1).
\]
Since $F=\Q_p$ is non-archimedean, $\frac{dx_1}{dx_{\psi_p}}=q_F^{-n(\psi_p)/2}$, hence $\varepsilon_L(V,\psi_p)=q_F^{n(\psi_p)}\varepsilon_D(V\omega_{1/2},\psi_p,dx_1)$. Using \cite[Eq.~3.4.5]{TateCorvallis} with $s=\tfrac12$, we get $\varepsilon_L(V,\psi_p)=\varepsilon_D(V,\psi_p,dx_1)\,q_F^{-a(V)/2}$, where $q_F$ is the cardinality of the residue field of $F$.

Now, for some $\sigma \in \Gal(\overline{\Q}/\Q)$, its action on the local additive character $\psi_p(x_p) = e^{-2\pi i r_p(x_p)}$ (where $r_p(x_p) \in \Q/\Z$ is the principal part of $x_p$) is given by $\psi_p^\sigma(x_p) = \sigma(\psi_p(x_p)) = \psi_p(ux_p)$, where $u = \chi_p(\sigma) \in \Z_p^\times$ is the $p$-adic cyclotomic character evaluated at $\sigma$. By the Galois equivariance of $\varepsilon_D$ (see \cite[Sec.~3.6]{TateCorvallis}), we have:
\[ \sigma(\varepsilon_D(V, \psi_p, dx_1)) = \varepsilon_D(V^\sigma, \psi_p^\sigma, dx_1). \]
Using \cite[Eq.~3.4.4]{TateCorvallis}, twisting the additive character by the unit $u$ yields:
\[ \varepsilon_D(V^\sigma, \psi_p^\sigma, dx_1) = \det V^\sigma(u) \, \varepsilon_D(V^\sigma, \psi_p, dx_1). \]

Under the local Langlands correspondence and local-global compatibility, the determinant of $V$ evaluated on $\Z_p^\times$ corresponds to the inverse of the local nebentypus $\Psi_p$. Therefore, the conjugated representation $V^\sigma$ has determinant $\det V^\sigma(u) = (\Psi_p^\sigma)^{-1}(u) = \Psi_p^\sigma(u^{-1})$. Substituting this into the previous equation yields the following:
\[ \sigma(\varepsilon_D(V, \psi_p, dx_1)) = \Psi_p^\sigma(u^{-1}) \, \varepsilon_D(V^\sigma, \psi_p, dx_1). \]


Combining this with Proposition \ref{Gal-inv-cond} and the comparison between Langlands and Deligne local constants gives:
\begin{equation}\label{Gal-equiv-Langlandsfactor-Galoisside}
\frac{\varepsilon_L(V^\sigma,\psi_p)}{\sigma(\varepsilon_L(V,\psi_p))}
= \Psi_p^\sigma(u) \frac{q_F^{-a(V)/2}}{\sigma(q_F^{-a(V)/2})}.
\end{equation}

Under the local Langlands correspondence, we have $\pi_{f^\sigma,p}\cong \sigma(\pi_{f,p})$, where the Galois action on representations is as defined in \cite[Sec.~7.4]{Henniart}. Moreover, as $V$ corresponds to $\pi_{f,p}$, by \cite[Propriété~3]{Henniart}, $V^\sigma$ corresponds to $\pi_{f^\sigma,p}$. Now as the local Langlands correspondence preserves $\varepsilon$-factors (cf.~\cite[Sec.~33]{BH}), one has
\[
\varepsilon_L(V,\psi_p)=\varepsilon_p(\pi_{f,p},\psi_p)
\quad \text{and} \quad
\varepsilon_L(V^\sigma,\psi_p)=\varepsilon_p(\pi_{f^\sigma,p},\psi_p).
\]
Since $q_F=p$ and $a(V)=N_p$, finally we get
\begin{equation}\label{galois-equiv-epsilon-C}
\varepsilon_p(\pi_{f^\sigma,p},\psi_p)
= \Psi_p^\sigma(\chi_p(\sigma)) \left(\frac{\sigma(\sqrt{p})}{\sqrt{p}}\right)^{N_p}
\varepsilon_p(\pi_{f,p},\psi_p)^\sigma.
\end{equation}

Therefore by Remark \ref{al-vs-epsilon}, we conclude the following:
\begin{equation} \label{Gal-equiv-al-C}
    \frac{\lambda_p(f^{\sigma})}{\sigma(\lambda_p(f))}
    = \Psi_p^\sigma(\chi_p(\sigma)) \frac{e^{2\pi}}{\sigma(e^{2\pi})} 
      \left( \frac{\sigma(\sqrt{p^{N_p}})}{\sqrt{p^{N_p}}} \right)
      \left(\frac{\sigma(\sqrt{p})}{\sqrt{p}}\right)^{N_p}.
\end{equation}

For $\sigma \in \Gal(\overline{\Q}/\Q)$, we have $\sigma(e^{2\pi})=e^{2\pi}$, hence
\begin{equation} \label{Gal-equiv-al-barQ}
    \lambda_p(f^{\sigma})
    = \Psi_p^\sigma(\chi_p(\sigma)) \left( \frac{\sigma(\sqrt{p^{N_p}})}{\sqrt{p^{N_p}}} \right)
      \left(\frac{\sigma(\sqrt{p})}{\sqrt{p}}\right)^{N_p}
      \sigma(\lambda_p(f)).
\end{equation}

Notice that the coefficient simplifies depending on the parity of $N_p$:

\begin{itemize}
    \item If $N_p$ is \textbf{even}, then $\sqrt{p^{N_p}}=p^{N_p/2}\in\Q$, so the first fractional factor is $1$, and
    \[
    \left(\frac{\sigma(\sqrt{p})}{\sqrt{p}}\right)^{N_p}=1.
    \]
    Hence the total fractional part is $1$.

    \item If $N_p$ is \textbf{odd}, then $\sqrt{p^{N_p}}=p^{(N_p-1)/2}\sqrt{p}$, so the first fractional factor is $\frac{\sigma(\sqrt{p})}{\sqrt{p}}$, and
    \[
    \left(\frac{\sigma(\sqrt{p})}{\sqrt{p}}\right)^{N_p}
    = \frac{\sigma(\sqrt{p})}{\sqrt{p}}.
    \]
    Hence the total fractional part is also $1$.
\end{itemize}

Therefore Equation \ref{Gal-equiv-al-barQ} simplifies to
\begin{equation} \label{al-gal-equiv-final}
\lambda_p(f^\sigma) = \Psi_p^\sigma(\chi_p(\sigma)) \sigma(\lambda_p(f)) \quad \text{for all } \sigma \in \Gal(\overline{\Q}/\Q).
\end{equation}

\begin{remark}
In the non-supercuspidal case, that is $a_p \neq 0$, the Galois equivariance of $\lambda_p(f)$ can also be deduced directly from \cite[Thm. 2.1]{al} via the Galois action on Gauss sums.
\end{remark}

In our setting, we restrict to the case where the global nebentypus $\Psi$ is quadratic. Its local component $\Psi_p$ therefore takes values in $\{\pm 1\}$. Consequently, $\Psi_p$ is invariant under the absolute Galois group, meaning $\Psi_p^\sigma = \Psi_p$ for all $\sigma \in \Gal(\overline{\Q}/\Q)$. The general equivariance relation thus simplifies to:
\[
\lambda_p(f^\sigma) = \Psi_p(\chi_p(\sigma)) \sigma(\lambda_p(f)).
\]

Since the local nebentypus itself is fixed across the orbit, we can define an equivalence relation $\sim$ on the unit circle $S^1$ as follows: for $z_1, z_2 \in S^1$, we declare
\begin{equation} \label{equiv-reln-on-al-ev}
    z_1 \sim z_2 \quad \text{if and only if} \quad z_2 = \Psi_p(\chi_p(\sigma)) \sigma(z_1)
\end{equation}
for some $\sigma \in \Gal(\overline{\Q}/\Q)$. The preceding discussion establishes the following result:

\begin{prop} \label{prop-al-invariant}
    Let $f \in S_k(N, \Psi)$ be a newform with a quadratic nebentypus $\Psi$, where $p$ is a prime dividing $N$, and let $\sim$ be the equivalence relation on $S^1$ defined in (\ref{equiv-reln-on-al-ev}). Then the equivalence class of the Atkin-Li pseudo-eigenvalue under $\sim$ is an invariant of the Galois orbit of $f$. That is, for any $\sigma \in \Gal(\overline{\Q}/\Q)$, we have 
    \[ \lambda_p(f^{\sigma}) \sim \lambda_p(f). \]
\end{prop}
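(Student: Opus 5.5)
The proof reduces to two checks: that $\sim$ is an equivalence relation, so that the class of $\lambda_p(f)$ makes sense, and that equation \eqref{al-gal-equiv-final} places $\lambda_p(f^\sigma)$ in the same class as $\lambda_p(f)$.

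\textbf{Invariance.} Fix $\sigma \in \Gal(\overline{\Q}/\Q)$. By the argument leading to \eqref{al-gal-equiv-final}, specialized to quadratic $\Psi$, we have $\lambda_p(f^\sigma) = \Psi_p(\chi_p(\sigma))\,\sigma(\lambda_p(f))$. This is exactly the defining condition $z_2 = \Psi_p(\chi_p(\sigma))\sigma(z_1)$ with $z_1=\lambda_p(f)$ and $z_2=\lambda_p(f^\sigma)$. Hence $\lambda_p(f^\sigma)\sim\lambda_p(f)$, provided we know the relation is symmetric.

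Two remarks make this legitimate. First, by \cite[Theorem~1.1]{al} the pseudo-eigenvalue $\lambda_p(f)$ is algebraic, so $\sigma(\lambda_p(f))$ is defined. Second, $\lambda_p(f^\sigma)$ is defined because $f^\sigma$ is again a newform in $S_k(N,\Psi)$: the Galois action preserves the space, as shown in Section~\ref{localtypegaloisorbit}, and preserves newness, since it commutes with the Hecke operators and the level-raising maps.

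\textbf{$\sim$ is an equivalence relation.} Throughout, $\sim$ should be understood as a relation on algebraic numbers of absolute value $1$, so that $\sigma(z)$ makes sense. Write $c(\sigma)=\Psi_p(\chi_p(\sigma))\in\{\pm1\}$. Since $\chi_p$ is a homomorphism $\Gal(\overline{\Q}/\Q)\to\Z_p^\times$ and $\Psi_p$ is a character, $c$ is a homomorphism, and it takes values in $\Q$, so $\tau(c(\sigma))=c(\sigma)$ for every $\tau$.
\begin{itemize}
\item Reflexivity: take $\sigma=1$.
\item Symmetry: if $z_2=c(\sigma)\sigma(z_1)$, then $\sigma^{-1}(z_2)=c(\sigma)z_1$. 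Since $c(\sigma^{-1})=c(\sigma)^{-1}=c(\sigma)$, this gives $z_1=c(\sigma^{-1})\sigma^{-1}(z_2)$.
\item Transitivity: if $z_2=c(\sigma)\sigma(z_1)$ and $z_3=c(\tau)\tau(z_2)$, then $z_3=c(\tau)c(\sigma)\,\tau\sigma(z_1)=c(\tau\sigma)\,\tau\sigma(z_1)$.
\end{itemize}
In other words, $\sim$ is the orbit relation of the twisted action $\sigma\ast z=c(\sigma)\sigma(z)$, which is a group action because $c$ is a rational-valued homomorphism, so compatibility is automatic.

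\textbf{Main subtlety.} The key input is the well-definedness of \eqref{al-gal-equiv-final}. In its derivation one must check that the factor $e^{2\pi}/\sigma(e^{2\pi})$ from Remark~\ref{al-vs-epsilon} cancels, and that the $\sqrt{p}$ factors cancel for both parities of $N_p$. Both were handled above. Once \eqref{al-gal-equiv-final} is available, the remaining argument is formal.
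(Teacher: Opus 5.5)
Your route is the paper's own: substitute \eqref{al-gal-equiv-final} into \eqref{equiv-reln-on-al-ev}. Your check that $\sim$ is an equivalence relation is correct: it is the orbit relation of $\sigma\ast z=c(\sigma)\sigma(z)$ for a $\{\pm1\}$-valued homomorphism $c$, which the paper leaves implicit. The gap is the step you yourself flagged as the main subtlety and then deferred to the text. In fact \eqref{al-gal-equiv-final} is false when $k$ and $N_p$ are both odd, and with $\sim$ as defined the proposition fails as well.

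Take $\Psi=\chi_{-3}$ (conductor $3$) regarded modulo $21$, with $k=3$ and $p=7$. The space $S_3^{\mathrm{new}}(21,\chi_{-3})$ is nonzero: Cohen--Oesterl\'e gives $\dim S_3(21,\chi_{-3})=4$, while $S_3(3,\chi_{-3})=0$. Let $f$ be a newform there. Since $7\,\|\,21$ and $\Psi$ is unramified at $7$, we have $a_7(f)^2=\chi_{-3}(7)\cdot 7=7$. The Atkin--Li formula used in the proof of Theorem~\ref{CountofLocalType} then gives $\lambda_7(f)=-7^{1/2}/a_7(f)\in\{\pm1\}$. If $\sigma(\sqrt7)=-\sqrt7$, then $a_7(f^\sigma)=-a_7(f)$, so $\lambda_7(f^\sigma)=-\lambda_7(f)$. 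But $\Psi_7$ is unramified, so $\Psi_7(\chi_7(\tau))=1$ for all $\tau$. Hence $\sim$ at $p=7$ is plain Galois conjugacy, and $1\not\sim-1$.

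The identity also fails at prime-power level. The form $f=\eta(z)^3\eta(7z)^3\in S_3(7,(\tfrac{\cdot}{7}))$ has rational coefficients and $\lambda_7(f)=\pm i$. So \eqref{al-gal-equiv-final} would force $\sigma(i)/i=\sigma(\sqrt{-7})/\sqrt{-7}$ for every $\sigma$, i.e.\ $\sqrt7\in\Q$.

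The derivation you relied on mishandles half-integral powers of $p$, in two places.
\begin{itemize}
\item The factor $p^{-N_p/2}$ in Remark~\ref{al-vs-epsilon} cannot be there, because $\lambda_p(f)$ and $\varepsilon_p(\pi_{f,p},\psi_p)$ both have absolute value $1$. The same goes for $e^{2\pi}$, on which $\Gal(\overline{\Q}/\Q)$ does not even act.
\item Because $\pi_f$ is unitarily normalized, the Weil--Deligne parameter of $\pi_{f^\sigma,p}$ is $V^\sigma\otimes\delta_\sigma^{k-1}$ rather than $V^\sigma$. Here $\delta_\sigma$ is the unramified character with $\delta_\sigma(p)=\sigma(\sqrt p)/\sqrt p$.
\end{itemize}
These two slips cancel only for even $k$. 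The correct relation is
\[
\lambda_p(f^\sigma)=\Psi_p(\chi_p(\sigma))\Bigl(\frac{\sigma(\sqrt p)}{\sqrt p}\Bigr)^{kN_p}\sigma(\lambda_p(f)).
\]
For $N_p=1$ you can read this off directly from the Atkin--Li formulas, which carry the factor $p^{k/2-1}$.

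So your argument is valid exactly when $kN_p$ is even. For a correct general statement, replace $\Psi_p(\chi_p(\sigma))$ in \eqref{equiv-reln-on-al-ev} by $c'(\sigma)=\Psi_p(\chi_p(\sigma))\,(\sigma(\sqrt p)/\sqrt p)^{kN_p}$. Since $c'$ is again a $\{\pm1\}$-valued homomorphism on $\Gal(\overline{\Q}/\Q)$, your reflexivity, symmetry and transitivity argument carries over verbatim, and invariance follows. Be aware that this changes the symmetric/asymmetric bookkeeping downstream. It affects Remark~\ref{rmk:steinberg-splitting} for odd $k$, and the examples at levels $27$, $343$ and $243$ in Section~\ref{sec:computational_evidence}, all of which have $kN_p$ odd.
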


\begin{remark} \label{al-general-nebentypus}
    When the global nebentypus $\Psi$ is not restricted to being quadratic, its local component $\Psi_p$ is generally not invariant under the absolute Galois group (i.e., $\Psi_p^\sigma \neq \Psi_p$). Consequently, the Atkin-Li pseudo-eigenvalue cannot be treated as an isolated invariant; it must be tracked alongside the conjugated nebentypus. To formalize this, let $X := \widehat{\mathbb{Z}_p^\times} \times S^1$, where $\widehat{\mathbb{Z}_p^\times}$ denotes the group of finite-order continuous characters of $\mathbb{Z}_p^\times$. We define an equivalence relation $\sim$ on $X$ by declaring $(\psi_1, z_1) \sim (\psi_2, z_2)$ if and only if there exists $\sigma \in \Gal(\overline{\mathbb{Q}}/\mathbb{Q})$ such that:
    \[ \psi_2 = \psi_1^\sigma \quad \text{and} \quad z_2 = \psi_1^\sigma(\chi_p(\sigma)) \sigma(z_1). \]
    Under this relation, the general Galois equivariance established in Equation \ref{Gal-equiv-al-barQ} immediately yields $(\Psi_p^\sigma, \lambda_p(f^\sigma)) \sim (\Psi_p, \lambda_p(f))$. Therefore, the equivalence class $[(\Psi_p, \lambda_p(f))]_\sim$ in the quotient space $X/\sim$ constitutes a well-defined invariant for the Galois orbit of $f$, generalizing our invariant beyond the quadratic nebentypus restriction.
\end{remark}
While Proposition \ref{prop-al-invariant} establishes that Atkin-Li pseudo-eigenvalue up to equivalence \ref{equiv-reln-on-al-ev} is an invariant of the Galois orbit of $f$, this invariant is generally not preserved under arbitrary twists. Recall in the trivial nebentypus case, \cite{dpt} observed that if $f \in S_k(\Gamma_0(p))$ and $\chi_p$ is a quadratic character of $p$-power conductor, then the twisted newform $f \otimes \chi_p \in S_k(\Gamma_0(p^2))$ has a predetermined Atkin-Lehner eigenvalue of $\chi_p(-1)$ \cite[Theorem 6]{al_1}. Thus the twist ``forgets'' about the original eigenvalue, eliminating the distinction between Galois orbits.

Similar phenomenon persists for forms with non-trivial nebentypus. Let $f \in S_k(N, \Psi)$, let $Q = p^{N_p}$ denote the $p$-primary part of the level, and let $\chi$ be a character of $p$-power conductor. If $\chi$ is highly ramified such that $N_p \leq a(\chi)$ and $a(\Psi_p \chi) = a(\chi)$, then by \cite[Theorem 4.1]{al}, the pseudo-eigenvalue of the twisted newform evaluates to:
\begin{equation} \label{eq:imp-of-min-twist}
 \lambda_p(f \otimes \chi) = \overline{\Psi_{N/Q}}(p^{a(\chi)}) \chi(-1) \frac{g(\Psi_p\chi)}{g(\overline{\chi})}. 
 \end{equation}

Because this pseudo-eigenvalue is entirely determined by the characters $\chi$ and $\Psi$, any dependence on the distinguishing Galois data of the original form $f$ is lost. Consequently, to construct a well-defined invariant for the Galois orbit, we must evaluate the pseudo-eigenvalue on a minimal quadratic twist. Following \cite{dpt}, we make the following definition:

\begin{definition} \label{defi:minimalAL-pseudo}
Let $f \in S_k(N,\Psi)$ be a newform with quadratic nebentypus $\Psi$, and let $p \mid N$. Define $T_f=\{f\otimes\chi\}$, where $\chi$ runs through all quadratic characters unramified outside $p$. Then exactly one of the following occurs:
\begin{enumerate}
    \item every form in $T_f$ has level at least $N$, or
    \item there is a unique form $g\in T_f$ whose level is minimal and smaller than $N$.
\end{enumerate}
In the first case, the minimal Atkin-Li pseudo-eigenvalue of $f$ at $p$ is defined to be the Atkin-Li pseudo-eigenvalue of $f$ itself. In the second case, it is defined to be the Atkin-Li pseudo-eigenvalue of the form $g$.
\end{definition}

Let $f_{\mathrm{min}}$ denote the form realizing this minimal level. Because any quadratic character $\chi$ is Galois-invariant, twisting commutes with Galois conjugation: $(f \otimes \chi)^\sigma = f^\sigma \otimes \chi$. Furthermore, by Proposition \ref{Gal-inv-cond}, the set of levels occurring in $T_f$ is identical to that in $T_{f^\sigma}$. Consequently, we find that $(f^\sigma)_{\mathrm{min}} = (f_{\mathrm{min}})^\sigma$.

Since $f_{\mathrm{min}}$ is a newform possessing the exact same quadratic nebentypus $\Psi$ as $f$, it satisfies the Galois equivariance relation established in Equation \ref{al-gal-equiv-final}. Evaluating the Atkin-Li pseudo-eigenvalue of the minimal twist of the conjugated form gives:
$$ \lambda_p((f^\sigma)_{\mathrm{min}}) = \lambda_p((f_{\mathrm{min}})^\sigma) = \Psi_p(\chi_p(\sigma)) \sigma(\lambda_p(f_{\mathrm{min}})). $$

Thus, the minimal Atkin-Li pseudo-eigenvalue enjoys the exact same Galois equivariance relation as the original pseudo-eigenvalue. Therefore, the equivalence class of the minimal pseudo-eigenvalue constitutes a well-defined invariant for the Galois orbit of $f$.

The minimal Atkin-Li pseudo-eigenvalue at $p$ of a newform $f$ is sometimes determined by the local type $\widetilde{\pi}_{f,p}$ of $f$ at $p$. The following is an analogue of \cite[Theorem 3.6]{dpt} in the case of quadratic nebentypus.

\begin{thm}\label{CountofLocalType}
	Let $\tau \in \mathcal{A}_p$ be such that $\widetilde{\tau} = \widetilde{\pi}_{f,p}$ for some newform $f \in S_k(N, \Psi)$ where $p$ is an odd prime that divides $N$ and $\Psi$ has order prime to $p$.
	\begin{enumerate}
		\item
		If $\widetilde{\tau}$ is a ramified principal series, ramified twist of Steinberg or an unramified supercuspidal type, then the pseudo-eigenvalue of the Atkin–Li operator $W_p$ is the same for all modular forms $f$ with local type $\widetilde{\tau}$ at $p$.
		
		\item 
		If $\widetilde{\tau}$ is a ramified supercuspidal type or an unramified twist of Steinberg, the pseudo-eigenvalues for modular forms with local type $\widetilde{\tau}$ at $p$ are unique up to a sign. Furthermore, these two values are interchanged when twisting by the quadratic unramified character, which preserves inertial types.
	\end{enumerate}
\end{thm}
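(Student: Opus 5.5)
The plan is to work entirely through Remark \ref{al-vs-epsilon}:
\[
\lambda_p(f)=e^{2\pi}p^{-N_p/2}\Psi_p\bigl(-p^{-N_p}\bigr)\varepsilon_p(\pi_{f,p},\psi_p).
\]
The local type fixes the conductor $N_p$. By Lemma \ref{p-part} it also fixes $\Psi_p$. So the statement reduces to a purely local question: how does $\varepsilon_p(\pi,\psi_p)$ vary as $\pi$ runs over representations with fixed inertial type $\widetilde{\tau}$ and fixed central character on $\Z_p^\times$? Any two such $\pi,\pi'$ differ by an unramified twist, $\pi'\cong\pi\otimes\mu$ with $\mu$ unramified. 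The actual global central character is also fixed, since it is determined by $\Psi$ and the weight. This forces $\mu^2$ to be trivial on $p$ in the principal series, Steinberg and unramified supercuspidal cases, and $\mu^2\omega_{K/\Q_p}$-compatible constraints in the ramified supercuspidal case. The key input is the standard twisting formula
\[
\varepsilon(\pi\otimes\mu,\psi)=\mu(p)^{a(\pi)+2n(\psi)}\varepsilon(\pi,\psi)
\]
for unramified $\mu$. With $n(\psi_p)=0$, this gives $\varepsilon(\pi\otimes\mu)=\mu(p)^{N_p}\varepsilon(\pi)$.

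Next I would pin down which unramified twists are allowed and which sign $\mu(p)^{N_p}$ they produce.

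\emph{Principal series.} Take $\pi(\chi_1,\chi_2)$ with a ramified $\chi_i$. Here I would not use twisting at all. I would instead compute directly: $\varepsilon(\pi)=\varepsilon(\chi_1)\varepsilon(\chi_2)$. If $\chi_2$ is unramified (the $n=1$ case), the value $\chi_2(p)$ enters, and I must check that it is fixed by the central character and $a_p$ relation. In the ramified-ramified case both factors are Gauss-sum expressions whose $p$-dependence is $\chi_1(p)\chi_2(p)=\omega(p)$. That value is determined by the global central character, i.e.\ by $\Psi'(p)$ from \eqref{adelic}. This matches \cite[Thm.~2.1]{al}, and I would cite that as a cross-check.

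\emph{Ramified Steinberg twist.} For $\mathrm{St}\otimes\mu$ with $\mu$ ramified, $\varepsilon=\varepsilon(\mu|\,|^{1/2})\varepsilon(\mu|\,|^{-1/2})$, which depends only on $\mu^2(p)$ and $\mu|_{\Z_p^\times}$. Both are fixed.

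\emph{Unramified supercuspidal.} Here $\pi\otimes\mu\cong\pi$ when $\mu=\omega_{K/\Q_p}$. The allowed twists satisfy $\mu^2=1$, so $\mu\in\{1,\omega_{K/\Q_p}\}$, and both give isomorphic $\pi$. Uniqueness follows.

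\emph{Unramified Steinberg.} Here $N_p=1$, and $\mu$ unramified quadratic changes $\varepsilon$ by $\mu(p)=-1$, giving the two values $\pm$. This matches the classical $-a_p p^{1-k/2}$ formula.

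\emph{Ramified supercuspidal.} Here $N_p=1+a(\theta)$ is odd by Theorem \ref{thm:LTformula_nebentypus}. Inducing from ramified $K$ means the unramified quadratic $\mu$ does not fix $\pi$, since $\mu\circ N_{K/\Q_p}\neq 1$ on $K^\times$ because $N(\varpi)$ has valuation $1$. So twisting by $\mu$ produces $\mu(p)^{N_p}=-1$ times the epsilon factor, and the two values differ by a sign. The twist preserves the inertial type and the central character up to $\mu^2=1$.

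The main obstacle is the exhaustiveness claim in part (2): showing that the allowed unramified twists are exactly $\{1,\mu_{\mathrm{unr}}\}$, and, in part (1), that the ambiguity is genuinely absent rather than merely a sign. I would handle this by noting that the central character of the $\pi$ is globally pinned down, including its value at $p$. That leaves only unramified quadratic twists. I would then check in each case whether $\pi\otimes\mu_{\mathrm{unr}}\cong\pi$ or whether $\mu_{\mathrm{unr}}(p)^{N_p}=1$. Part (1) holds because either $N_p$ is even, or in the $n=1$ principal series case the twist changes the central character.
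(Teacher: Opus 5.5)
\textbf{The gap: principal series with $N_p=1$.} This type occurs exactly when $\Psi_p$ is tamely ramified. Then $\pi=\pi(\chi_1,\chi_2)$ with $\chi_1|_{\Z_p^\times}=\Psi_p^{-1}$ and $\chi_2$ unramified. The check you postpone, that $\chi_2(p)$ is fixed by the central character and an $a_p$ relation, cannot be carried out. The central character fixes only the product $\chi_1(p)\chi_2(p)$. Apart from the normalization $\chi_2(p)=a_p(f)/p^{(k-1)/2}$, nothing constrains $\chi_2(p)$ except $|a_p(f)|=p^{(k-1)/2}$.

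Your claim that two representations of the same type differ by an unramified twist also fails for principal series. Take $\pi'=\pi(\chi_1\mu,\chi_2\mu^{-1})$ with $\mu$ unramified unitary. It has the same inertial type and central character as $\pi$, but it is not a twist of $\pi$, and
\[
\varepsilon(\pi',\psi_p)=\mu(p)^{a(\chi_1)-a(\chi_2)}\,\varepsilon(\pi,\psi_p).
\]
So constancy holds precisely when $a(\chi_1)=a(\chi_2)$, i.e.\ for $N_p\ge 2$. This equality of conductors is the hidden hypothesis behind your remark that the $p$-dependence is through $\chi_1(p)\chi_2(p)$, and you should state it.

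For $N_p=1$, $\lambda_p(f)$ equals $p^{(k-1)/2}/a_p(f)$ times a constant independent of $f$, and it genuinely varies. Take $N=p\equiv 1\pmod 4$. Then $\Psi$ is even, so $k$ is even, and $|a_p(f)|^2=p^{k-1}$ forces $a_p(f)\notin\Q$. Hence some conjugate $f^\sigma$, which has the same (unique) type at $p$, satisfies $a_p(f^\sigma)\neq a_p(f)$, and so $\lambda_p(f^\sigma)\neq\lambda_p(f)$.

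Your closing justification, that the twist changes the central character, does not address this. The relevant deformations are not twists at all, and the genuine twist by the unramified quadratic character $\eta$ leaves the central character unchanged. So part (1) is false for this type and must be restricted to principal series with both characters ramified. The paper's proof has the same hole. It uses only $(\chi_1\chi_2)(p)$ and the inertial restrictions, which control $\chi_1(p)^{a(\chi_1)}\chi_2(p)^{a(\chi_2)}$ only when the conductors agree.

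\textbf{The rest.} For the discrete-series types your argument is correct and takes a different route from the paper. The paper computes $\theta(p)$, $\theta$ at a ramified uniformizer, $\varepsilon(\mu)^2$, and $a_p$ case by case. You instead use three facts:
\begin{itemize}
\item a Steinberg or supercuspidal type is a single unramified-twist class;
\item the central character of $\pi\otimes\mu$ is $\omega_\pi\mu^2$ in every case, so the constraint is simply $\mu\in\{1,\eta\}$;
\item $\varepsilon(\pi\otimes\eta,\psi_p)=(-1)^{N_p}\varepsilon(\pi,\psi_p)$.
\end{itemize}
This is more uniform and yields the ``interchanged by $\eta$'' clause of (2) directly. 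The paper gets that clause only implicitly: its sign ambiguity in $\theta$ at the uniformizer is exactly $\theta$ versus $\theta\cdot(\eta\circ N_{K/\Q_p})$.

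One repair is needed in the ramified supercuspidal case. Showing $\eta\circ N_{K/\Q_p}\neq 1$ excludes only $\theta\cdot(\eta\circ N_{K/\Q_p})=\theta$; you must also exclude $\theta^\iota$. One way: $\pi\otimes\eta\cong\pi$ would make $\pi$ dihedral with respect to the unramified quadratic extension, forcing $N_p$ to be even.
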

	
\begin{proof} We consider each case separately:
	\begin{enumerate} 
		\item
		Let $\pi(\chi_1,\chi_2)\in \widetilde{\tau}$ be a principal series representation attached to a newform $f\in S_k(N,\Psi)$, where we write the fixed global level as $N=p^rM$ with $(p,M)=1$. Now the central character $\omega_{\pi_p} = \chi_1\chi_2$ evaluated at the uniformizer $p$ must equal the prime-to-$p$ part of the global nebentypus. This forces the identity $(\chi_1\chi_2)(p)=\Psi_M(p)$. Moreover, the local type $\widetilde{\tau}$ fixes the restrictions of $\chi_1,\chi_2$ to inertia. Since the local Gauss sum depends only on these restrictions to inertia, it follows that the local epsilon factor at $p$ is same for all forms with local type $\widetilde{\tau}$. By Remark \ref{al-vs-epsilon}, the Atkin–Li pseudo-eigenvalue is therefore uniquely determined within this class.
        \medskip
        \item 
Write $\pi_{f,p} \cong \mathrm{St} \otimes \psi $ with $\psi$ unramified. Evaluating the central character condition at $p$ gives $\psi(p)^2=\Psi_p^{-1}(p)$, which admits exactly two solutions $\psi(p)=\pm \alpha$. By \cite[Theorem 2.1]{al}, $\lambda_p(f)=-p^{k/2-1}/a_p(f)$. Since $a_p(f)=\psi(p)p^{k/2-1}$ by \cite[Prop. 2.8(2)]{lw}, we obtain $\lambda_p(f)=-\frac{1}{\psi(p)}$. Thus the two choices $\psi(p)=\pm\alpha$ produce two Atkin–Li pseudo-eigenvalues differing by a sign.

Now let $\pi_{f,p} \cong \mathrm{St} \otimes \mu$, where the character $\mu$ is ramified. By \cite[Table, p. 113]{MR0379375}, because the inducing characters are ramified, the local $\varepsilon$-factor of this special representation simplifies directly to the product of the $\varepsilon$-factors of its inducing characters:
\[ \varepsilon(\pi_{f,p}, \psi_p) = \varepsilon(\mu |\cdot|^{1/2}, \psi_p) \varepsilon(\mu |\cdot|^{-1/2}, \psi_p). \]
Applying the unramified twist formula $\varepsilon(\chi \theta, \psi_p) = \theta(p)^{a(\chi)+n(\psi_p)}\varepsilon(\chi, \psi_p)$ with $\chi = \mu$ and $\theta = |\cdot|^{\pm 1/2}$, we obtain:
\begin{align*}
    \varepsilon(\mu |\cdot|^{1/2}, \psi_p) &= (p^{-1/2})^{a(\mu)+n(\psi_p)} \varepsilon(\mu, \psi_p), \\
    \varepsilon(\mu |\cdot|^{-1/2}, \psi_p) &= (p^{1/2})^{a(\mu)+n(\psi_p)} \varepsilon(\mu, \psi_p).
\end{align*}
Multiplying these two terms gives $\varepsilon(\pi_{f,p}, \psi_p) = \varepsilon(\mu, \psi_p)^2$.

The local inertial type $\widetilde{\tau}$ fixes the restriction $\mu|_{\Z_p^\times}$. The central character condition evaluated at $p$ allows exactly two extensions: $\mu$ and $\mu \eta$, where $\eta$ is the unramified quadratic character satisfying $\eta(p) = -1$. Twisting by $\eta$ gives $\varepsilon(\mu \eta, \psi_p) = (-1)^{a(\mu)+n(\psi_p)} \varepsilon(\mu, \psi_p)$. Because the epsilon factor of the special representation squares this underlying term, we conclude:
\[ \varepsilon(\mathrm{St}  \otimes  \mu \eta, \psi_p) = \varepsilon(\mu \eta, \psi_p)^2 = \left[ (-1)^{a(\mu)+n(\psi_p)} \varepsilon(\mu, \psi_p) \right]^2 = \varepsilon(\mu, \psi_p)^2 = \varepsilon(\pi_{f,p}, \psi_p). \]
Thus, by Remark \ref{al-vs-epsilon}, the Atkin-Li pseudo-eigenvalues are identical for all modular forms $f$ possessing this local type.

		\medskip 
		\item 
		Let $\pi_{f, p} \in \widetilde{\tau}$ be a supercuspidal representation, that is the associated Weil representation via the local Langlands correspondence equals $\Ind_{W(K)}^{W(\Q_p)} \theta$, where $K/\Q_p$ is a quadratic extension. By the property of epsilon factors, we have
\begin{equation} \label{supepsilon} \varepsilon(\Ind_{W(K)}^{W(\Q_p)} \theta, \psi) = \varepsilon(\theta, \psi_K)= \theta(\pi)^{a(\theta)+n(\psi_K)} \cdot q^{-\frac{a(\theta)}{2}} \sum_{x \in \mathcal{O}_K^\times/U_K^{a(\theta)}} \theta^{-1}(x) \psi_K\left(\frac{x}{c}\right),\end{equation}
where $\psi_K = \psi \circ \text{Tr}_{K/\Q_p}$, $\pi$ is a uniformizer of $K$, and $c \in K^\times$ has valuation $a(\theta)+n(\psi_K)$. As discussed in Section \ref{sec:prelim}, the central character of $\pi_{f,p}$ is equal to $\theta|_{\Q_p^\times} \omega_{K/\Q_p}$. Equating it with the determinant of the Galois side, we deduce that 
\begin{equation} \label{equal}
    \theta|_{\Q_p^\times} \omega_{K/\Q_p}=(\chi_\ell^{k-1} \Psi)|_{\Q_p^\times},
\end{equation}
where the right hand side is considered as a character of $\Q_p^{\times}$ by local class field theory. Because the global space $S_k(N, \Psi)$ is fixed, evaluating the right-hand side at the uniformizer $p$ yields a constant determined entirely by the weight $k$ and the prime-to-$p$ part of the nebentypus. From this, we deduce that $\theta(p)$ is uniquely determined. Now, since $p$ can be taken as a uniformizer of the quadratic unramified extension of $\Q_p$, from the Equation \ref{supepsilon} we can see that restricting to the inertia subgroup at $p$ determines the local epsilon factors uniquely. We conclude using Remark \ref{al-vs-epsilon}. Next, assume that $K/\Q_p$ is ramified. 

For $p$ odd, $K=\Q_p(\pi)$ where $\pi=\sqrt{-p}$ or $\sqrt{-p\xi_{p-1}}$ depending on whether $p$ is the norm of an element of the extension field or not. Here $\xi_{p-1}$ denotes a primitive $(p-1)$-th root of unity. In this case, we consider $\pi$ as a uniformizer of $K$ so that we have $N_{K/\Q_p}(\pi)=-\pi^2$.  Let $g_\pi \in \Gal(\overline{\Q_p}/K)$ be an element that maps to $\pi \in K^\times$ under the reciprocity map. Then equating the determinants at $g_\pi$, we have
\begin{equation} \label{det-equn} \theta(\pi) \theta^\sigma(\pi) = \chi_\ell^{k-1}(g_\pi) \Psi(g_\pi). \end{equation} By the functoriality of the local Artin map (see (1.2.2) of \cite{TateCorvallis}), evaluating the character $\chi_\ell$ on $g_\pi$ corresponds to evaluating it on the norm $N_{K/\Q_p}(\pi) = -\pi^2$. Because $-\pi^2 \in p\Z_p^\times$ and $\chi_\ell$ is unramified at $p$, this evaluates exactly to $p$, reducing the cyclotomic factor to $\chi_\ell^{k-1}(g_\pi) = p^{k-1}$. Furthermore, we have $\theta^\sigma(\pi)=\theta(-\pi)$. Substituting these evaluations into the Equation \ref{det-equn}, it follows that $\theta(\pi^2)=\theta(-1) p^{k-1} \Psi_p(g_\pi)$. Again evaluating both sides of Equation \ref{equal} at $-1$, we get $\theta(-1)=\left(\frac{-1}{p}\right) \Psi_p(-1)$ and we deduce that $\theta(\pi^2)=\left(\frac{-1}{p}\right) p^{k-1} \Psi_p(-g_\pi)$, so it is uniquely determined.
		
		Now, choose an additive character $\psi$ of conductor $0$. Then by \cite[Lemma 1.8]{tunnell}, $n(\psi_K)=1$. As $\Psi$ has order prime to $p$, $a(\theta)$ must be even, hence $a(\theta)+n(\psi_K)$ odd. Note that $\theta(\pi^2)$ is uniquely determined, which means $\theta(\pi)$ is determined uniquely up to a sign. Thus from Equation \ref{supepsilon} we deduce that the epsilon factors of $\pi_{f,p} \in \widetilde{\tau}$ differ by a sign. Hence, the result follows from Remark \ref{al-vs-epsilon}. 
        \end{enumerate}
\end{proof}

\begin{remark}
    It is worth noting that the assumption of the global nebentypus $\Psi$ being of order prime to $p$ is utilized exclusively in the proof for the ramified supercuspidal representations (specifically, to deduce that the Artin conductor $a(\theta)$ must be even). For all other local types the conclusions regarding the uniqueness (or splitting) of the Atkin-Li pseudo-eigenvalues hold for any arbitrary nebentypus.
\end{remark}

Now again assume that $\Psi$ is quadratic. For a prime $p$ with $\val_p(N)=N_p\geq 1$, there is a natural map $\Phi$ from the set of newforms in \(S_k^{\mathrm{new}}(N), \Psi\) to
$\mathbf{LT}(p^{N_p}, \Psi)$ $\times S^1/\sim$, given by
\[
f \longmapsto ([\widetilde{\pi}_{f,p}],\ [\lambda_p]),
\]
where [$\widetilde{\pi}_{f,p}$] denotes the Galois orbit of the local inertial type of \(f\) at \(p\), and
[\(\lambda_p\)] is the equivalence class (under the equivalence relation defined in \ref{equiv-reln-on-al-ev}) of the minimal Atkin-Li pseudo-eigenvalue of $f$ at \(p\).

Theorem \ref{CountofLocalType} implies that this map is almost never surjective, since for any
local inertial type at most two Atkin-Li pseudo-eigenvalues can occur.

\begin{definition}
\label{def:compatibleAL}
Given a local-type Galois orbit \(\widetilde{\tau}\), a \emph{compatible minimal
Atkin-Li pseudo-eigenvalue} is a value
\(\lambda \in S^1\) such that Theorem \ref{CountofLocalType} does not preclude the
pair \(([\widetilde{\tau}],[\lambda])\) from being in the image of the above map.
\end{definition}

Let \(\mathbf{LO}(p^{n}, \Psi)\) denote the number of pairs \(([\widetilde{\tau}],[\lambda])\),
where \([\widetilde{\tau}]\) is a local-type Galois orbit of level \(p^{n}\), and nebentypus $\Psi$ and
\([\lambda]\) is an equivalence class of compatible minimal Atkin-Li pseudo-eigenvalue.

By Theorem \ref{CountofLocalType}, a local inertial type $\widetilde{\tau}$ that is either an unramified twist of Steinberg or ramified supercuspidal, permits exactly two compatible Atkin-Li pseudo-eigenvalues, differing by a sign: $\{\lambda, -\lambda\}$. We partition the set of such local types into two disjoint subsets based on how these two values behave under the equivalence relation defined in (\ref{equiv-reln-on-al-ev}):

\begin{itemize}
    \item $S_{\text{sym}}$ \textbf{(Symmetric Orbits):} The set of local types $\widetilde{\tau}$ for which the two allowed minimal pseudo-eigenvalues are equivalent ($\lambda \sim -\lambda$). Because the two values collapse into a single equivalence class, each type in $S_{\text{sym}}$ contributes $1$ to the $\mathbf{LO}$ count.
    
    \item $S_{\text{asym}}$ \textbf{(Asymmetric Orbits):} The set of local types $\widetilde{\tau}$ for which the two allowed minimal pseudo-eigenvalues are not equivalent ($\lambda \not\sim -\lambda$). Because the two values remain in distinct equivalence classes, each type in $S_{\text{asym}}$ contributes $2$ to the $\mathbf{LO}$ count.
\end{itemize}

\begin{remark} \label{rmk:steinberg-splitting}
    Whether an unramified twist of the Steinberg representation belongs to a symmetric or asymmetric orbit depends entirely on the value of the global nebentypus $\Psi$ at $p$. Because this local type occurs only when $p \parallel N$ and $\Psi_p$ is unramified, the equivalence condition $\Psi_p(\chi_p(\sigma)) \sigma(\lambda) = -\lambda$ simplifies as follows. The $p$-adic cyclotomic character $\chi_p$ takes values in $\Z_p^\times$. Since $\Psi_p$ is unramified, it is trivial on $\Z_p^\times$, meaning $\Psi_p(\chi_p(\sigma)) = 1$ for all $\sigma \in \Gal(\overline{\Q}/\Q)$. Thus, the equivalence condition reduces to $\sigma(\lambda) = -\lambda$.
    
    Recall that for this local type, with $\pi_{f,p} \cong \mathrm{St}\otimes \psi $, we have $\lambda_p(f)=-\frac{1}{\psi(p)}$, and the central character condition gives $\psi(p)^2=\Psi_p^{-1}(p)$. The resulting pseudo-eigenvalues depend entirely on whether $p$ splits or remains inert in the quadratic field cut out by the nebentypus:
    \begin{itemize}
        \item If $\Psi_p^{-1}(p) = 1$, the pseudo-eigenvalues are $\lambda \in \{\pm 1\}$. Since these values are rational, $\sigma(\lambda) = \lambda \neq -\lambda$ for every Galois automorphism $\sigma$. The two values are therefore inequivalent, so this unique local type belongs to $S_{\text{asym}}$. Consequently, for $n=1$, $|S_{\text{asym}}| = 1$, yielding exactly two distinct Atkin-Li orbits.
        \item If $\Psi_p^{-1}(p) = -1$, the pseudo-eigenvalues are $\lambda \in \{\pm i\}$. The absolute Galois group contains automorphisms (such as complex conjugation) mapping $i \mapsto -i$. Thus, the equivalence $\sigma(\lambda) = -\lambda$ is satisfied, placing the local type in $S_{\text{sym}}$. Consequently, for $n=1$, $|S_{\text{asym}}| = 0$, yielding exactly one Atkin-Li orbit.
    \end{itemize}
\end{remark}

\begin{thm} \label{count-of-LO}
    Let $p \neq 2$ be a prime number. Let $m$ be the largest odd divisor of $p+1$. The values of $\mathbf{LO}(p^n, \Psi)$ for newforms with a non-trivial quadratic nebentypus are given in Table \ref{table:valuesLO_nebentypus_ramified} when the local nebentypus $\Psi_p$ is tamely ramified, and in Table \ref{table:valuesLO_nebentypus_unramified} when $\Psi_p$ is unramified.
    
\begin{table}[h]
\centering
\scalebox{0.95}{
\begin{tabular}{||l|c|c||}
\hline
$n$ & $p \neq 3$ & $p = 3$ \\
\hline\hline
$1$ & $1$ & $1$ \\
\hline
$2$ & $\sigma_0\left(\frac{p-1}{2}\right) + \sigma_0(m) - 1$ & $1$ \\
\hline
$3$ & $2 + |S_{\text{asym}}|$ & $2 + |S_{\text{asym}}|$ \\
\hline
$n \ge 4 \text{ even}$ & $\sigma_0\left(\frac{p-1}{2}\right) + \sigma_0(m)$ & $2$ \\
\hline
$n \ge 5 \text{ odd}$ & $2 + |S_{\text{asym}}|$ & $4 + |S_{\text{asym}}|$ \\
\hline
\end{tabular}
}
\vspace{2mm}
\caption{Values for $\mathbf{LO}(p^n, \Psi)$ with a \textbf{tamely ramified} quadratic nebentypus $\Psi_p$.}
\label{table:valuesLO_nebentypus_ramified}
\end{table}

\begin{table}[h]
\centering
\scalebox{0.95}{
\begin{tabular}{||l|c|c||}
\hline
$n$ & $p \neq 3$ & $p = 3$ \\
\hline\hline
$1$ & $\begin{cases} 2 & \Psi_p^{-1}(p) = 1 \\ 1 & \Psi_p^{-1}(p) = -1 \end{cases}$ & $\begin{cases} 2 & \Psi_p^{-1}(p) = 1 \\ 1 & \Psi_p^{-1}(p) = -1 \end{cases}$ \\
\hline
$2$ & $\sigma_0(p-1) + \sigma_0(p+1) - 2$ & $3$ \\
\hline
$3$ & $2 + |S_{\text{asym}}|$ & $2 + |S_{\text{asym}}|$ \\
\hline
$n \ge 4 \text{ even}$ & $\sigma_0(p-1) + \sigma_0(p+1)$ & $5$ \\
\hline
$n \ge 5 \text{ odd}$ & $2 + |S_{\text{asym}}|$ & $4 + |S_{\text{asym}}|$ \\
\hline
\end{tabular}
}
\vspace{2mm}
\caption{Values for $\mathbf{LO}(p^n, \Psi)$ with an \textbf{unramified} quadratic nebentypus $\Psi_p$.}
\label{table:valuesLO_nebentypus_unramified}
\end{table}
\end{thm}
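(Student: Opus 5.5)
The plan is to compute $\mathbf{LO}(p^n,\Psi)$ row by row as a sum over the local type Galois orbits counted in Theorem \ref{thm:LTformula_nebentypus}. Each orbit $[\widetilde{\tau}]$ is weighted by the number of $\sim$-classes of compatible minimal Atkin--Li pseudo-eigenvalues, and these weights are read off from Theorem \ref{CountofLocalType}. Concretely, the weight is $1$ for ramified principal series, ramified twists of Steinberg and unramified supercuspidal types, since the pseudo-eigenvalue is then uniquely determined. For unramified twists of Steinberg and ramified supercuspidal types the values are $\{\lambda,-\lambda\}$, so the weight is $1$ or $2$ according as $\widetilde{\tau}\in S_{\mathrm{sym}}$ or $S_{\mathrm{asym}}$. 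This gives
\[
\mathbf{LO}(p^n,\Psi)=\#\{\text{orbits with unique }\lambda\}+|S_{\mathrm{sym}}|+2|S_{\mathrm{asym}}| = \mathbf{LT}(p^n,\Psi)+|S_{\mathrm{asym}}|,
\]
where $S_{\mathrm{asym}}$ is taken among the orbits at level $p^n$. Before using this I will check two points. First, the minimal twist in Definition \ref{defi:minimalAL-pseudo} does not change the type class. Second, when a minimal quadratic twist exists, the compatible values are those attached to the minimal form's type; these are still described by Theorem \ref{CountofLocalType}, so the weight depends only on $[\widetilde{\tau}]$. I also need $\Psi$ to have order prime to $p$, which holds because $\Psi$ is quadratic and $p$ is odd.

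Next I will go through the rows. For $n=1$ with $\Psi_p$ tamely ramified, the only type is the principal series, so $\mathbf{LO}=1$. For $n=1$ with $\Psi_p$ unramified, the only type is the unramified Steinberg, and Remark \ref{rmk:steinberg-splitting} gives $2$ or $1$ according to $\Psi_p^{-1}(p)=\pm1$. For even $n$, all types are principal series, ramified Steinberg or unramified supercuspidal, so $\mathbf{LO}=\mathbf{LT}$ and I will add the columns of Tables \ref{table:nebentypus_ramified} and \ref{table:nebentypus_unramified}.

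In the tamely ramified case at $n=2$ this gives $\sigma_0(\frac{p-1}{2})-1+\sigma_0(m)-\epsilon+\epsilon$, where $\epsilon=1$ if $p\equiv1\pmod 4$ and $\epsilon=0$ otherwise; the Steinberg term exactly cancels the correction in the S.C.U. column. For $p=3$ one has $\frac{p-1}{2}=1$ and $m=1$, so the count is $1$; at $n\ge4$ it is $2$. In the unramified case at $n=2$ the count is $\sigma_0(p-1)-1+1+\sigma_0(p+1)-2$, which gives $3$ for $p=3$; at $n\ge4$ it is $\sigma_0(p-1)+\sigma_0(p+1)$, which gives $5$ for $p=3$.

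For odd $n\ge3$ only the S.C.R. column contributes, with $2$ orbits (or $4$ when $p=3$ and $n\ge5$). The formula above then yields $2+|S_{\mathrm{asym}}|$, respectively $4+|S_{\mathrm{asym}}|$.

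The main obstacle is the bookkeeping rather than any deep input. I must make sure that no type in Theorem \ref{thm:LTformula_nebentypus} is misclassified with respect to Theorem \ref{CountofLocalType}; in particular, the $n=2$ ramified-$K$ supercuspidals coincide with S.C.U. types and so carry weight $1$. I must also justify that the dichotomy $\lambda\sim-\lambda$ is intrinsic to the orbit, i.e. independent of the representative $\widetilde{\tau}$ chosen in it. For the latter I will use the equivariance \eqref{al-gal-equiv-final}: conjugating by $\sigma$ maps the pair $\{\lambda,-\lambda\}$ for $\widetilde{\tau}$ to the pair for $\sigma(\widetilde{\tau})$. Because $\sim$ is defined by the same twisted Galois action, the relation $\lambda\sim-\lambda$ is transported along the orbit.
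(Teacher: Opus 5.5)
Your route is the paper's: you write $\mathbf{LO}(p^n,\Psi)=\mathbf{LT}(p^n,\Psi)+|S_{\mathrm{asym}}|$ using Theorem \ref{CountofLocalType} and then sum the rows of the $\mathbf{LT}$ tables. Your $p=3$ arithmetic is fine, and so is your argument that the relation $\lambda\sim-\lambda$ is constant along a type orbit.

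\textbf{The gap.} Your preliminary claim that passing to the minimal quadratic twist of Definition \ref{defi:minimalAL-pseudo} does not change the type class is false. It breaks the entry $n=2$ with $\Psi_p$ unramified.

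\begin{itemize}
\item There the Steinberg type is $\mathrm{St}\otimes\mu$ with $a(\mu)=1$ and $\mu^2|_{\Z_p^\times}=1$, so $\mu|_{\Z_p^\times}$ is the Legendre character.
\item Twist $f$ by the quadratic Dirichlet character of conductor $p$. This preserves $\Psi$ and the levels away from $p$, and turns $\pi_{f,p}$ into an unramified twist $\mathrm{St}\otimes\psi$ of conductor $p$. So $f_{\min}$ is this twist.
\item The minimal pseudo-eigenvalue is therefore $-1/\psi(p)$ with $\psi(p)^2=\Psi_p^{-1}(p)$. It is governed by part (2) of Theorem \ref{CountofLocalType}, not part (1), so by Definition \ref{def:compatibleAL} both signs are compatible.
\item By Remark \ref{rmk:steinberg-splitting}, this type contributes $2$ when $\Psi_p^{-1}(p)=1$ and $1$ when $\Psi_p^{-1}(p)=-1$, exactly as at $n=1$.
\end{itemize}

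This is precisely the phenomenon the paper recalls from \cite{dpt} just before Definition \ref{defi:minimalAL-pseudo} to motivate minimal twists. Your own second check, reading the compatible values off the minimal form's type, would have caught it. As written, your conclusion that $\mathbf{LO}=\mathbf{LT}$ for every even $n$ does not follow.

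\textbf{Relation to the paper's proof.} The paper makes the same move (``By Theorem \ref{CountofLocalType}(1), none of these types split''), so your numbers reproduce the stated table. With the definitions as given, however, the argument only yields $\sigma_0(p-1)+\sigma_0(p+1)-2+\delta$ at $n=2$. Here $\delta=1$ if $\Psi_p^{-1}(p)=1$ and $\delta=0$ otherwise; for $p=3$ this is $3+\delta$. This entry is not used in Theorem \ref{main_ineq}, since for prime-power $N$ a non-trivial $\Psi$ forces $\Psi_p$ to be tamely ramified.

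\textbf{The other entries.} These are right, but instead of asserting that type classes are preserved you should check that no quadratic twist lowers the level. Let $\epsilon$ be the quadratic character of $\Z_p^\times$.
\begin{itemize}
\item Tamely ramified case: twisting by $\epsilon$ sends an order-$4$ $\mu$ to an order-$4$ character. It can make $\chi_1$ unramified only if $\chi_2$ already is. It can make an S.C.U.\ $\theta$ unramified only if $\theta$ factors through the norm.
\item Conductors $\ge 2$ do not move under a conductor-$1$ twist.
\item For ramified $K$, $\epsilon\circ N_{K/\Q_p}$ is trivial on $\mco_K^\times$, so $a(\theta)$ is unchanged.
\end{itemize}
The only other failure of your claim is $\pi(\chi_1,\chi_2)$ with $\chi_i|_{\Z_p^\times}=\epsilon$ at $n=2$ and $\Psi_p$ unramified. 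It is harmless: the twist becomes unramified at $p$, and nothing splits.
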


\begin{proof}
By Theorem \ref{CountofLocalType}(1), the Atkin-Li pseudo-eigenvalue is uniquely determined for principal series, ramified twists of Steinberg, and unramified supercuspidal types. Thus, each such local type yields exactly one Atkin-Li orbit, meaning their contribution to $\mathbf{LO}$ is exactly equal to their $\mathbf{LT}$ count. By Theorem~\ref{CountofLocalType}(2), unramified twists of Steinberg (which occur exclusively at $n=1$) and ramified supercuspidal types admit two pseudo-eigenvalues up to sign. Thus for any level $n$ containing such types, the total count is
\[
\mathbf{LO}(p^n,\Psi) = \mathbf{LT}(p^n,\Psi) + |S_{\text{asym}}|
\]

If $\Psi_p$ is tamely ramified, an unramified twist of Steinberg cannot occur. Splitting is induced exclusively by the ramified supercuspidal types, which exist only for odd $n \ge 3$. The Steinberg count at $n=2$ corresponds to a ramified twist, which does not split. Therefore, $\mathbf{LO} = \mathbf{LT}$ for $n \le 2$ and even $n \ge 4$. Note that for $n=2$, the sum simplifies to $\sigma_0\left(\frac{p-1}{2}\right) + \sigma_0(m) - 1$. At $n=3$, the $\mathbf{LT}$ count for ramified supercuspidal representations is $2$ for all odd primes, uniformly yielding $2 + |S_{\text{asym}}|$ orbits at $n=3$. For odd $n \ge 5$, substituting the $\mathbf{LT}$ sums from Table~\ref{table:nebentypus_ramified} yields $2 + |S_{\text{asym}}|$ for $p \neq 3$, and $4 + |S_{\text{asym}}|$ for $p=3$.

If $\Psi_p$ is unramified, the unramified twist of Steinberg appears exclusively at $n=1$. As established in Remark~\ref{rmk:steinberg-splitting}, the two pseudo-eigenvalues remain in distinct equivalence classes ($|S_{\text{asym}}| = 1$) if $\Psi_p^{-1}(p) = 1$, yielding exactly $1 + 1 = 2$ orbits. On the other hand, they merge into a single equivalence class ($|S_{\text{asym}}| = 0$) if $\Psi_p^{-1}(p) = -1$, yielding exactly $1$ orbit. For $n=2$, the contributing local types are principal series, ramified twists of Steinberg, and unramified supercuspidals. By Theorem~\ref{CountofLocalType}(1), none of these types split. Consequently, the $\mathbf{LO}$ count at $n=2$ is the sum of the $\mathbf{LT}$ row: $(\sigma_0(p-1)-1) + 1 + (\sigma_0(p+1)-2) = \sigma_0(p-1) + \sigma_0(p+1) - 2$. For even $n \ge 4$, no splitting occurs. For odd $n \ge 3$, the only type appearing is ramified supercuspidal. Notice that the $\mathbf{LT}$ count at $n=3$ is $2$ for all odd primes, uniformly yielding $2 + |S_{\text{asym}}|$ orbits at $n=3$. For odd $n \ge 5$, the base $\mathbf{LT}$ counts match the tamely ramified case, yielding $2 + |S_{\text{asym}}|$ for $p \neq 3$, and $4 + |S_{\text{asym}}|$ for $p = 3$.
\end{proof}

 \begin{remark} \label{rmk:local-orbits-parallel}
A different approach to defining invariants for ramified supercuspidal orbits was recently introduced in \cite[Section 3]{KM}. Although their analysis is restricted to trivial nebentypus, the underlying framework is expected to extend naturally to the quadratic nebentypus setting. In their formulation, Galois orbits of local representations are determined by the associated ramified quadratic extension, the Galois orbit of the restriction of the character to inertia, and the local root number. These invariants have direct counterparts in our framework: the ramified quadratic extension and the Galois orbit of the character restricted to inertia are captured by the local inertial type, while the local root number corresponds to the minimal Atkin-Li pseudo-eigenvalue. Consequently, the two approaches yield the same classification of local orbits and hence the same orbit count.

\end{remark}

    \section{Existence of local types with compatible Atkin–Li sign} \label{sec:existence-thm}
\begin{prop}
\label{CMsareBounded}
    Fix a positive integer $N$ and a Dirichlet character $\chi$ modulo $N$. The number of CM newforms in $S_k^{\mathrm{new}}(N, \chi)$ is a bounded function of $k$.
\end{prop}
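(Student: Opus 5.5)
The plan is to use the classical description of CM newforms as theta series attached to Hecke characters of imaginary quadratic fields. Bounding the data of such characters is enough: the level and nebentypus constrain everything except the infinity type, and the infinity type is fixed by the weight. Concretely, let $f \in S_k^{\mathrm{new}}(N,\chi)$ have CM. By Ribet's theorem there is an imaginary quadratic field $K$ and an algebraic Hecke character $\phi$ of $K$ with the following properties:
\begin{itemize}
\item $f = \sum_{\mathfrak a} \phi(\mathfrak a) q^{N(\mathfrak a)}$;
\item $\phi$ has infinity type $z \mapsto z^{k-1}$ (up to choosing an embedding);
\item $N = |d_K| \cdot N_{K/\Q}(\mathfrak f_\phi)$, where $\mathfrak f_\phi$ is the conductor of $\phi$;
\item $\chi = \chi_K \cdot (\phi|_{\Z})/(\cdot)^{k-1}$, with $\chi_K$ the quadratic character of $K$.
\end{itemize}
Since $|d_K|$ divides $N$, only finitely many fields $K$ occur. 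Likewise $N(\mathfrak f_\phi)$ divides $N/|d_K|$, so for each $K$ only finitely many conductors $\mathfrak f$ occur. It therefore suffices to bound, independently of $k$, the number of Hecke characters of $K$ with a given conductor $\mathfrak f$ and infinity type $z^{k-1}$.

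For this bound I would use the exact sequence describing Hecke characters of a fixed conductor. The characters with conductor dividing $\mathfrak f$ and infinity type $z^{k-1}$ exist only if the character $u \mapsto u^{k-1}$ on $\mathcal O_K^\times$ is compatible with some finite-order character of $(\mathcal O_K/\mathfrak f)^\times$. When they exist, they form a torsor under the group of finite-order characters of the ray class group $\mathrm{Cl}_{\mathfrak f}(K)$. Indeed, two such characters differ by a character trivial at infinity, which is a character of $\mathrm{Cl}_{\mathfrak f}(K)$. That group is finite, and its order is $h_K \cdot |(\mathcal O_K/\mathfrak f)^\times| / [\mathcal O_K^\times : \mathcal O_K^\times \cap (1+\mathfrak f)]$. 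This quantity depends only on $K$ and $\mathfrak f$, not on $k$, so the number of such characters is at most $|\mathrm{Cl}_{\mathfrak f}(K)|$ for every $k$.

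Combining the three bounds, the number of CM newforms is at most $\sum_{K}\sum_{\mathfrak f} |\mathrm{Cl}_{\mathfrak f}(K)|$, summed over the finitely many admissible pairs $(K,\mathfrak f)$. This sum is independent of $k$. The nebentypus condition only cuts the count down further, and distinct newforms come from distinct pairs $(K,\phi)$ up to the conjugation $\phi \leftrightarrow \phi\circ c$. The step needing most care is invoking the CM classification correctly, namely the level formula $N=|d_K|N(\mathfrak f_\phi)$ and the torsor structure. I would cite Ribet ("Galois representations attached to eigenforms with nebentypus") together with Shimura or Miyake for the theta series construction, rather than reprove them.
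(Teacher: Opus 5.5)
Your proposal is correct and is essentially the paper's argument. The paper likewise identifies CM newforms with $(N,k,\chi)$-suitable Hecke characters of imaginary quadratic fields: primitive, with $|D|\,\mathrm{N}\mathfrak{m}=N$, infinity type $\sigma^{k-1}$, and finite part matching $\chi\cdot(D|\cdot)$ on integers. It then bounds their number independently of $k$ by citing Tsaknias's counting argument, whereas you make that count explicit through the finiteness of admissible pairs $(K,\mathfrak{f})$ and the torsor structure under characters of $\mathrm{Cl}_{\mathfrak{f}}(K)$.
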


\begin{proof}
    Let $K$ be an imaginary quadratic field with discriminant $D$, let $(D|\cdot)$ denote the Kronecker symbol associated to it and let $\sigma$ be a fixed complex embedding of $K$. Let $\mathfrak{m}$ be an integral ideal of $K$ with absolute norm $\mathrm{N}\mathfrak{m}$. Recall that a Hecke character $\psi$ of $K$ decomposes into a finite type $\psi_f$ and an infinity type $\psi_\infty$. 

    Following the notation of \cite{tsaknias}, we define a Hecke character $\psi$ of $K$ to be $(N, k, \chi)$-suitable if it satisfies the following conditions:
    \begin{itemize}
        \item $\psi$ is primitive,
        \item $|D|\mathrm{N}\mathfrak{m}=N$, where $\mathfrak{m}$ is the modulus of $\psi$,
        \item The infinity type of $\psi$ is $\psi_\infty = \sigma^{k-1}$,
        \item $\psi_f(m)=\chi(m)(D|m)$ for all integers $m$.
    \end{itemize}
    
    With this definition, the counting argument of \cite{tsaknias} carries over verbatim, replacing $(N,k)$-suitable Hecke characters with $(N,k,\chi)$-suitable Hecke characters to obtain a bound on the number of CM newforms of level $N$ and weight $k$, independent of $k$, exactly as in \cite[Remark 4.3]{tsaknias}.
\end{proof}
The proof of the following theorem is broadly following \cite[Theorem 4.1]{dpt}. We expand on the details for the sake of completion.
\begin{thm} \label{existence-thm}
Let $N$ be a positive integer such that either $N$ is a prime power, or $\val_q(N)$ is an odd integer $\geq 3$ for every prime $q \mid N$. For each prime $q \mid N$, let $\widetilde{\tau}_q$ be a local type of level $q^{\val_q(N)}$ coming from a newform $f \in S_{k}(N, \chi)$ and let $\lambda_q$, which is an algebraic integer with $|\lambda_q|=1$, be a compatible minimal Atkin-Li pseudo-eigenvalue for $\widetilde{\tau}_q$. Then there exists a positive integer $k_0$ such that for any $k' \geq k_0$, there exists a newform $f' \in S_{k'}(N, \chi)$ such that for all primes $q$ dividing $N$ 
\begin{itemize}
\item $\widetilde{\pi}_{f',q} \cong \widetilde{\tau}_q$ ,
\item the minimal Atkin-Li pseudo-eigenvalue of $f'$ at $q$ equals $\lambda_q$,
\item $f'$ does not have complex multiplication.
\end{itemize}
(We assume $k' \equiv k \pmod 2$ for all large weights $k'$ to ensure the non-nullity of the space $ S_{k'}(N, \chi)$.)
\end{thm}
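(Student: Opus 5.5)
The plan is to follow \cite[Theorem 4.1]{dpt}: first turn the conditions on $f'$ into purely local conditions on $\pi_{f',q}$ for $q \mid N$, then count newforms satisfying these local conditions with a limit multiplicity formula, and finally remove the CM forms using Proposition \ref{CMsareBounded}.

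\emph{Step 1 (localisation).} For each $q \mid N$, fix $\pi_q \in \mathcal{A}_q$ with inertial type $\widetilde{\tau}_q$, conductor $q^{\val_q(N)}$, central character compatible with $\chi$ and weight $k'$, and minimal Atkin-Li pseudo-eigenvalue $\lambda_q$. Theorem \ref{CountofLocalType} says how much freedom there is.
\begin{itemize}
\item In case (1) the pseudo-eigenvalue is forced, so any extension of $\widetilde{\tau}_q$ works.
\item In case (2) the two possible values $\pm\lambda$ are interchanged by twisting with the unramified quadratic character. This twist keeps the inertial type, and the compatibility conditions are only imposed at $q$. So the required $\lambda_q$ is obtained by choosing $\theta(\varpi)$ (respectively $\psi(q)$) with the correct sign.
\end{itemize}
By Remark \ref{al-vs-epsilon}, the minimal pseudo-eigenvalue is determined by the $\varepsilon$-factor of the minimal twist. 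The minimal twist at $q$ is read off from $\pi_{f',q}$ alone. Hence any newform $f'$ with $\pi_{f',q}\cong \pi_q$, or with $\pi_{f',q}$ differing from $\pi_q$ only by an unramified twist that leaves $\varepsilon$ unchanged, satisfies the first two conditions. One point needs care: the central character at $q$ depends on $k'$ through $\chi_\ell^{k'-1}(q)=q^{k'-1}$. I would therefore normalise unitarily and check, as in the proof of Theorem \ref{CountofLocalType}, that the sign choice made above does not depend on $k'$ within a fixed parity.

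\emph{Step 2 (limit multiplicity).} Next I need a lower bound, growing with $k'$, for the number of newforms of weight $k'$ whose local components at the primes $q\mid N$ lie in the chosen classes. The two hypotheses on $N$ correspond to the two available inputs.
\begin{itemize}
\item If $N=p^n$ and $\pi_p$ is a principal series, use Weinstein's count \cite{weinstein}. It gives, for large $k'$, a positive proportion (of size $\asymp k'$) of newforms in $S_{k'}(N,\chi)$ whose restriction to $\GL_2(\Z_p)$ contains the prescribed type.
\item If every $\val_q(N)$ is odd and at least $3$, then every $\widetilde{\tau}_q$ is a ramified supercuspidal type by Theorem \ref{thm:LTformula_nebentypus} (and the same holds for odd $n\ge 3$ in the prime power case). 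In this situation I would apply Kim--Shin--Templier's equidistribution of the local components of discrete series of weight $k'\to\infty$ with respect to the Plancherel measure. Taking the test function to be (a pseudo-coefficient of) the compactly supported pieces attached to the $\pi_q$ shows that the number of $f'$ with $\pi_{f',q}\cong\pi_q$ for all $q$ is $c\,k'+o(k')$ with $c>0$, where $c$ is the formal degree.
\end{itemize}
The step I expect to be the main obstacle is the supercuspidal case with prescribed sign. The Plancherel measure has to separate $\pi_q$ from its unramified quadratic twist, because these two carry opposite $\lambda_q$. The Kim--Shin--Templier count is taken with fixed central character, which pins down $\theta(q)$ or $\theta(\varpi^2)$; the sign of $\theta(\varpi)$ then distinguishes two non-isomorphic supercuspidals. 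Each of them is an atom of positive Plancherel mass, so the count works for each one separately. This is exactly why a fine, representation-level count is needed rather than a count by type. For principal series no sign has to be prescribed, which is why Weinstein's type-level count is enough there.

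\emph{Step 3 (non-CM).} Proposition \ref{CMsareBounded} shows that the number of CM newforms in $S_{k'}(N,\chi)$ is bounded independently of $k'$. The count from Step 2 grows linearly in $k'$, so for $k'\ge k_0$ (with $k'\equiv k \pmod 2$) at least one of the forms produced there is not CM. Any such form is the required $f'$.
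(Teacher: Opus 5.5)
Your overall structure matches the paper's proof. You prescribe the local component at each $q\mid N$ using Theorem~\ref{CountofLocalType}. You count with Weinstein's type-level asymptotic for principal series and with the Kim--Shin--Templier representation-level count for discrete series. You then discard CM forms with Proposition~\ref{CMsareBounded}. Your remark that one should normalise unitarily, so that the prescribed $\pi_q$ does not depend on $k'$, is a point the paper leaves implicit.

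The gap is that Step~2 does not cover all local types allowed when $N=p^n$ is a prime power. Your first bullet treats only principal series, and your second treats only odd $n\ge 3$. By Theorem~\ref{thm:LTformula_nebentypus}, a prime-power level can also carry:
\begin{enumerate}
\item the unramified twist of Steinberg ($n=1$, $\Psi_p$ unramified);
\item ramified twists of Steinberg ($n=2$);
\item unramified supercuspidal types (even $n$).
\end{enumerate}
None of these is handled. The sentence ``the two hypotheses on $N$ correspond to the two available inputs'' is where the case split goes wrong.

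The $n=1$ Steinberg case cannot be folded into your Weinstein bullet. It is a case~(2) type: $\lambda_p(f')=-1/\psi(p)$, and the two choices $\psi(p)=\pm\alpha$ give the same inertial type. So a count by type does not control $\lambda_p$. Nor can you pass from one sign to the other by a global twist, since no level-preserving Dirichlet twist does this. What you need is a count of newforms with $\pi_{f',p}\cong\mathrm{St}\otimes\psi$ for a prescribed $\psi$. The paper takes this from \cite[Thm.~6.4 or Cor.~6.5]{kst}, and it treats the remaining discrete-series types at prime-power level by the same Kim--Shin--Templier count.

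Your Plancherel-atom argument does extend to these cases, because $\mathrm{St}\otimes\psi$ and $\mathrm{St}\otimes\psi\eta$ are non-isomorphic discrete series with the same central character. But you have to state this extension and justify it with the version of the limit multiplicity formula that covers Steinberg components.

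A smaller point: Weinstein's theorem prescribes inertial types, not the nebentypus. You should invoke Lemma~\ref{p-part} (at prime-power level the type at $p$ determines $\chi$) to know that the forms it produces lie in $S_{k'}(N,\chi)$.
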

\begin{proof}
Fix a prime $q$ dividing $N$. Also by Lemma \ref{p-part}, any modular form $f'$ of level $N$ satisfying $\widetilde{\pi}_{f',q} \cong \widetilde{\tau}_q \cong \widetilde{\pi}_{f,q}$ for all primes $q \mid N$, will have nebentypus $\chi$. We divide the proof into parts based on the nature of the local type. 

\emph{Ramified principal series at a fixed ramified prime:} In \cite{weinstein}, Weinstein has proved an asymptotic formula for the number of Hilbert modular forms (with any nebentypus) with a prescribed global inertial type $\widetilde{\tau}$ (see \cite[\S 1.1 and \S 1.2]{weinstein} for details). From Theorem \ref{thm:LTformula_nebentypus} and the restrictions on the possible values of level $N$, we see that in the ramified principal series case, we are only allowed to work with modular forms of fixed prime power level $N:=q^{N_q}$. Define the global type $\widetilde{\tau}=(\bigotimes_{p < \infty} \widetilde{\tau}_p) \otimes \widetilde{\tau}_{\infty} $ which is unramified at all finite places except $q$, meaning $\widetilde{\tau}_p$ is trivial if $p \neq q$. So if we fix $\widetilde{\tau}_{\infty}$ to be the local type of $\mathcal{D}_{k',k'-2}$, the discrete series of weight $k'$, \cite[Theorem 1.1]{weinstein} tells us the number of modular forms with prescribed global inertial type $\widetilde{\tau}$, denoted by $\#S(\widetilde{\tau})$, follows the asymptotics:$$\#S(\widetilde{\tau}) \sim Cd(\widetilde{\tau})$$ where $C$ is a constant and $d(\widetilde{\tau})=\dim(\widetilde{\tau}_q)(k'-1)$. Therefore, the number of modular forms $f' \in S_{k'}(q^{N_q}, \chi) $ grows linearly with the weight $k'$, confirming the existence of modular forms in all sufficiently large weights (maintaining the parity on weights) with a fixed local type. Now recall part (1) of Theorem \ref{CountofLocalType}, which states that when the local type is ramified principal series, it uniquely determines the Atkin-Li pseudo-eigenvalue. Hence $f'$ will have the required minimal pseudo-eigenvalue $\lambda_q$.

\emph{Supercuspidal or special at ramified primes:} As we are concerned with the existence of modular forms in arbitrary weight with fixed level and fixed nebentypus, we can make use of limit multiplicity results of Shin-Templier \cite{Shin-Templier} and Kim-Shin-Templier \cite{kst} (See footnote of page 110 of \cite{Shin-Templier}). Using the trace formula ideas, they prove \cite[Theorem 1.2]{kst} that if $G$ is
any connected reductive group over a totally real field, the number of automorphic forms
of weight $k'$ and level $N$ with prescribed local representations (which are supercuspidal at ramified primes) grows linearly with $k'$ (recall that we fix $\xi$ in loc. cit. to be $\mathcal{D}_{k', k'-2}$, the discrete series
of weight $k'$, which gives the linear growth for varying weight). Hence, for any sufficiently large weight $k'$ with the same parity as $k$, we have a newform $f' \in S_{k'}(N, \chi)$ with $\pi_{f',q}$ isomorphic to the prescribed local supercuspidal representation at ramified primes $q$. Remark \ref{al-vs-epsilon} states that, for a fixed level $N$ and nebentypus $\chi$, Atkin-Li pseudo-eigenvalue is uniquely determined by the local representation. As we obtain modular forms with prescribed local representations (not only local type), the minimal pseudo-eigenvalue of $f'$ must be $\lambda_q$ for all ramified $q$ where the local representation is supercuspidal. The Steinberg local type case can be handled similarly using the limit multiplicity formulas \cite [Thm. 6.4 or Cor. 6.5]{kst}

Thus, we establish the existence of a positive integer $k_0$  such that for any $k \geq k_0$, there exists a newform $f' \in S_{k'}(N, \chi)$ which has a fixed local type $\widetilde{\pi}_{f,q}$ and a compatible minimal pseudo-eigenvalue $\lambda_q$ at all $q \mid N$. In the course of the proof, we have seen that the number of modular forms satisfying the first two conditions grows linearly with the weight. Therefore, from Proposition \ref{CMsareBounded}, we conclude the existence of a non-CM modular form for all large enough weights satisfying the first two conditions.
\end{proof}

\begin{remark}
  Due to the limitations of the available limit multiplicity formulas, we restrict the level $N$ to be either a prime power, or to have an odd $p$-adic valuation $\val_p(N) \ge 3$ for all $p \mid N$. When $N$ has multiple prime factors, we must simultaneously control the Atkin-Li pseudo-eigenvalues at all dividing primes. Because pseudo-eigenvalues for Steinberg and ramified supercuspidal types can split, we must fix the exact local representations (not just the local inertial types) to eliminate sign ambiguity.
    
The trace formula methods of Kim-Shin-Templier \cite{kst} allow us to fix exact local representations at multiple primes simultaneously, but only if those representations belong to the discrete series (i.e., Steinberg or supercuspidal types). We cannot fix an exact principal series representation and guarantee its existence in an automorphic form. Since newforms with a quadratic nebentypus only exhibit ramified principal series types at $p$ when $\val_p(N) = 1$ or $\val_p(N)\geq 2 $ is even, we must exclude primes with exponent $1$ or even when $N$ is composite to ensure all prescribed local representations are discrete series. (See \cite[Remark 4.2]{dpt} for related discussions).
\end{remark}

Now we are ready to prove the main theorem of our paper.

\begin{proof} [{Proof of Theorem \ref{main_ineq}}]
  By Theorem \ref{existence-thm}, for large enough $k$, each compatible pair $([\widetilde{\tau}], [\lambda])$ is realized by a non-CM newform. Suppose two distinct pairs yield modular forms $f_1$ and $f_2$ that belong to the same global Galois orbit. This creates a contradiction:
  \begin{itemize}
      \item Theorem \ref{orbit-preserves-type} dictates that global conjugacy preserves local types, forcing $[\widetilde{\tau}_1] = [\widetilde{\tau}_2]$.
      \item Proposition \ref{prop-al-invariant} and subsequent discussion proves that global conjugacy preserves the minimal pseudo-eigenvalue equivalence class, forcing $[\lambda_1] = [\lambda_2]$.
  \end{itemize}

  Because both components are forced to match, the original pairs could not have been distinct. Thus, distinct pairs map injectively to distinct global orbits, establishing the lower bound for $\mathbf{NCM}(N,k,\Psi)$.
\end{proof}

\section{Computational Evidence}
\label{sec:computational_evidence}
To illustrate the counting formulas and the decomposition into symmetric and asymmetric ramified supercuspidal types, we present data from the LMFDB \cite{LMFDB}.

\subsection{Level 27 ($p=3$, $N_p=3$)}

By Theorem \ref{thm:LTformula_nebentypus}, $\mathbf{LT}(27,\Psi)=2$. This local type is a ramified supercuspidal representation, which admits exactly two Atkin-Li pseudo-eigenvalues differing by a sign. From the space $S_{35}(27,\Psi)$, one finds three non-CM Galois orbits realizing the pseudo-eigenvalues $\{\pm1,\pm i\}$:
\begin{itemize}
    \item Orbit 27.35.b.d: $\lambda_3 \in \{1,-1\}$,
    \item Orbits 27.35.b.b and 27.35.b.c: $\lambda_3=i$ and $\lambda_3=-i$.
\end{itemize}
Since $\Psi_3$ cuts out $\Q(\sqrt{-3})$, one checks that $1\sim -1$ and $i\sim -i$, so $|S_{\text{asym}}|=0$ and $\mathbf{LO}(27,\Psi)=2$. The computational data at weight $k=35$ yields $\mathbf{NCM}=3$. Guided by the generalized Maeda philosophy, one expects that for a fixed pair $(p^n,\Psi)$, the quantity $\mathbf{NCM}(p^n,k,\Psi)$ is eventually independent of $k$. If we assume this generalized conjecture holds and that the non-CM orbit count has already stabilized by weight $k=35$ (as suggested by the data available in the LMFDB), it follows that $\mathbf{NCM}(27,k,\Psi) = 3$ for all $k \ge 35$. Under these assumptions, we obtain the strict inequality $\mathbf{LO}(27,\Psi) < \mathbf{NCM}(27,k,\Psi)$ for all sufficiently large weights.
\subsection{Level 125 ($p=5$, $N_p=3$)}
Here $\mathbf{LT}(125,\Psi)=2$. From the spaces $S_{10}(125,\Psi)$ and $S_{12}(125,\Psi)$, one finds three non-CM orbits for each weight $k \in \{10, 12\}$:
\begin{itemize}
\item Orbits 125.k.b.a and 125.k.b.b: $\lambda_5 \in \{1,-1\}$,
\item Orbit 125.k.b.c: $\lambda_5 \in \{i,-i\}$.
\end{itemize}
 Since $\Psi_5$ cuts out $\Q(\sqrt{5})$, we have $1\sim -1$ and $i\sim -i$, hence $|S_{\text{asym}}|=0$ and $\mathbf{LO}(125,\Psi)=2$. The data gives $\mathbf{NCM}=3$, so $\mathbf{LO}(125,\Psi)<\mathbf{NCM}(125,k,\Psi)$.

\subsection{Level 343 ($p=7$, $N_p=3$)}
We have $\mathbf{LT}(343,\Psi)=2$. From the spaces $S_{3}(343,\Psi)$ and $S_{5}(343,\Psi)$, one finds three non-CM orbits for each weight $k \in \{3, 5\}$:
\begin{itemize}
\item Orbit 343.k.b.e: $\lambda_7 \in \{1,-1\}$,
\item Orbits 343.k.b.c and 343.k.b.d: $\lambda_7=i$ and $\lambda_7=-i$.
\end{itemize}
Since $\Psi_7$ cuts out $\Q(\sqrt{-7})$, one checks $1\sim -1$ and $i\sim -i$, hence $|S_{\text{asym}}|=0$ and $\mathbf{LO}(343,\Psi)=2$. Again $\mathbf{NCM}=3$, so $\mathbf{LO}(343,\Psi)<\mathbf{NCM}(343,k,\Psi)$.

\subsection{Level 243 ($p=3$, $N_p=5$)}

Here $\mathbf{LT}(243,\Psi)=4$, all of which are ramified supercuspidal types. Because each local type admits exactly two pseudo-eigenvalues differing by sign, this yields a theoretical total of $8$ possible Atkin-Li pseudo-eigenvalues. The space $S_{7}(243,\Psi)$ contains exactly $8$ non-CM global orbits, distributed among the following pseudo-eigenvalues:
\begin{itemize}
    \item Orbit 243.7.b.j: $\lambda_3 \in \{1,-1\}$,
    \item Orbits 243.7.b.c, 243.7.b.d, 243.7.b.f, and 243.7.b.h: $\lambda_3=i$,
    \item Orbits 243.7.b.e, 243.7.b.g, and 243.7.b.i: $\lambda_3=-i$.
\end{itemize}
Across all 8 global orbits, only four distinct pseudo-eigenvalues $\{\pm 1, \pm i\}$ appear. This indicates that exactly two of the four local types are realized at this weight, while the remaining two types are absent. 

For the two realized local types, we can explicitly determine their equivalence classes. Since $\Psi_3$ cuts out $\Q(\sqrt{-3})$, one checks that $1 \sim -1$ and $i \sim -i$. Therefore, both realized local types belong to $S_{\text{sym}}$. This guarantees that at least two local types are symmetric ($|S_{\text{sym}}| \ge 2$), which in turn bounds the possible asymmetric contribution of the remaining two unseen types to $|S_{\text{asym}}| \le 2$. Hence,
\[ \mathbf{LO}(243,\Psi) = \mathbf{LT}(243,\Psi) + |S_{\text{asym}}| \le 4 + 2 = 6. \]
Thus we obtain the strict inequality $\mathbf{LO}(243,\Psi) \le 6 < 8 = \mathbf{NCM}(243,7,\Psi)$, which verifies our lower bound theorem.

\section{Declaration}
\begin{itemize}
\item 
 \textbf{Conflict of interest}: On behalf of all the authors, the corresponding author states there is no Conflict of interest.
 \item 
 \textbf{Data availability}: The data used in this study are available in the L-functions and Modular Forms Database (LMFDB). Specific mathematical objects are referenced in the text using their standard LMFDB labels (e.g., 27.35.b.d).
\end{itemize}

\def\cprime{$'$}


\begin{thebibliography}{BDO96}

    \bibitem[AL70]{al_1}

    A.~O.~L. Atkin and J. Lehner, Hecke operators on $\Gamma \sb{0}(m)$, Math. Ann. {\bf 185} (1970), 134--160.

    
		\bibitem[AL78]{al}
		A.~O.~L. Atkin and W.~C.~W. Li, \emph{Twists of newforms and pseudo-eigenvalues
			of {$W$}-operators}, Invent. Math. \textbf{48} (1978), no.~3,  221--243.

        \bibitem[BH06]{BH}C.~J. Bushnell and G.~M. Henniart, {\it The local Langlands conjecture for $\rm GL(2)$}, Grundlehren der mathematischen Wissenschaften, 335, Springer, Berlin, 2006.
		
		\bibitem[BMM23]{bmm}
		D. Banerjee, T. Mandal and S. Mondal, Two properties of symmetric cube transfers of modular forms, J. Number Theory {\bf 275} (2025), 160--195.
        
\bibitem[B97]{Bump}
D.~W. Bump, {\it Automorphic forms and representations}, Cambridge Studies in Advanced Mathematics, 55, Cambridge Univ. Press, Cambridge, 1997.

   

         \bibitem[D69]{Deligne}
        P. Deligne, Formes modulaires et repr\'esentations $l$-adiques, in {\it S\'eminaire Bourbaki. Vol. 1968/69: Expos\'es 347--363}, Exp.\ No.\ 355, 139--172, Lecture Notes in Math., 175, Springer, Berlin.
        
		\bibitem[DPT21]{dpt}
		L.~V. Dieulefait, A.~Pacetti, and P.~Tsaknias, \emph{On the number of {G}alois
			orbits of newforms}, J. Eur. Math. Soc. (JEMS) \textbf{23} (2021), no.~8,
		2833--2860.
        
        

        \bibitem[F79]{flath}
		D.~Flath, \emph{Atkin-{L}ehner operators}, Math. Ann. \textbf{246} (1979/80),
		no.~2,  121--123.

        \bibitem[G75]{MR0379375}
		S.~S. Gelbart, {\it Automorphic forms on ad\`ele groups}, Annals of Mathematics Studies, No. 83, Princeton Univ. Press, Princeton, NJ, 1975 Univ. Tokyo Press, Tokyo, 1975.

\bibitem[GK80]{gerardinkutzko}
P. G\'erardin and P.~C. Kutzko, Facteurs locaux pour ${\rm GL}(2)$, Ann. Sci. \'Ecole Norm. Sup. (4) {\bf 13} (1980), no.~3, 349--384.

\bibitem[GM12]{Ghitza}
A. Ghitza and A.~W. McAndrew, Experimental evidence for Maeda's conjecture on modular forms, Tbil. Math. J. {\bf 5} (2012), no.~2, 55--69.

        
        \bibitem[H01]{Henniart}
        G.~M. Henniart, Sur la conjecture de Langlands locale pour ${\rm GL}_n$, J. Th\'eor. Nombres Bordeaux {\bf 13} (2001), no.~1, 167--187.

        \bibitem[HM97]{HidaMaeda}
        H. Hida and Y. Maeda, Non-abelian base change for totally real fields, Pacific J. Math. {\bf 1997}, Special Issue, 189--217.
        
        \bibitem[KL06]{KL}
		A.~Knightly and C.~Li, Traces of {H}ecke operators, Vol. 133 of
		\emph{Mathematical Surveys and Monographs}, American Mathematical Society,
		Providence, RI (2006), ISBN 978-0-8218-3739-9; 0-8218-3739-7.

        \bibitem[KM25]{KM}
A. Knightly and K. Martin, 
\textit{Counting newforms with prescribed ramified supercuspidal components}, 
preprint (2025), arXiv:2511.14587. 
Latest version available at \url{https://kimballmartin.github.io/papers/sctrace.pdf}.
		
         \bibitem[KST20]{kst}J. Kim, S.~W. Shin and N. Templier, Asymptotic behavior of supercuspidal representations and Sato-Tate equidistribution for families, Adv. Math. {\bf 362} (2020), 106955, 57 pp.

       \bibitem[L22]{joshlam}
Y.~H.~J.~Lam,
\textit{Motivic local systems on curves and Maeda's conjecture},
preprint (2022), arXiv:2211.06120.

\bibitem[LM]{LMFDB}
\textit{The LMFDB Collaboration}. (2026). The L-functions and Modular Forms Database. \url{http://www.lmfdb.org}.
		
		\bibitem[LW12]{lw}
		D.~Loeffler and J.~Weinstein, \emph{On the computation of local components of a
			newform}, Math. Comp. \textbf{81} (2012), no. 278,  1179--1200.
            
        \bibitem[M15]{Maeda}
            Y. Maeda, Maeda's conjecture and related topics, in {\it Algebraic number theory and related topics 2013}, 305--324, RIMS K\^oky\^uroku Bessatsu, B53, Res. Inst. Math. Sci. (RIMS), Kyoto.

            \bibitem[M21]{Martin}
            K. Martin, An on-average Maeda-type conjecture in the level aspect, Proc. Amer. Math. Soc. {\bf 149} (2021), no.~4, 1373--1386.

            \bibitem[MS16]{MurtySrinivas}
            M.~R. Murty and K. Srinivas, Some remarks related to Maeda's conjecture, Proc. Amer. Math. Soc. {\bf 144} (2016), no.~11, 4687--4692.

          


            
            

    \bibitem[S79]{Serre}
J.-P. Serre, {\it Local fields}, translated from the French by Marvin Jay Greenberg, 
Graduate Texts in Mathematics, 67, Springer, New York-Berlin, 1979.

\bibitem[Sh71]{Shimura}
G. Shimura, {\it Introduction to the arithmetic theory of automorphic functions}, Kan\^o{} Memorial Lectures Publications of the Mathematical Society of Japan, No. 1 No. 11, Iwanami Shoten Publishers, Tokyo, 1971 Princeton Univ. Press, Princeton, NJ, 1971.

    \bibitem[ST16]{Shin-Templier}
    S.~W. Shin and N. Templier, Sato-Tate theorem for families and low-lying zeros of automorphic $L$-functions, Invent. Math. {\bf 203} (2016), no.~1, 1--177. 

    \bibitem[T77]{TateCorvallis}
    J.~T. Tate, Number theoretic background, in {\it Automorphic forms, representations and $L$-functions (Proc. Sympos. Pure Math., Oregon State Univ., Corvallis, Ore., 1977), Part 2}, pp. 3--26, Proc. Sympos. Pure Math., XXXIII, Amer. Math. Soc., Providence, RI.
    
	\bibitem[T14]{tsaknias}
    P. Tsaknias, A possible generalization of Maeda's conjecture, in {\it Computations with modular forms}, 317--329, Contrib. Math. Comput. Sci., 6, Springer, Cham. 
		\bibitem[T83]{tunnell}
		J.~B. Tunnell, \emph{Local {$\epsilon $}-factors and characters of {${\rm
					GL}(2)$}}, Amer. J. Math. \textbf{105} (1983), no.~6,  1277--1307.
				

         \bibitem[W09]{weinstein}
        J.~S. Weinstein, Hilbert modular forms with prescribed ramification, Int. Math. Res. Not. IMRN {\bf 2009}, no.~8, 1388--1420. 
	\end{thebibliography}
\end{document}